\documentclass[a4paper,12pt]{amsart}
\usepackage{amsmath,amssymb,colordvi}
\usepackage{multirow}
\usepackage{verbatim}
\newtheorem{theorem}{Theorem}
\newtheorem{corollary}[theorem]{Corollary}

\newtheorem{proposition}[theorem]{Proposition}
\newtheorem{definition}[theorem]{Definition}

\newtheorem{remark}[theorem]{\it Remark}

\DeclareMathOperator*{\esssup}{ess\,sup}

\def\B{\mathbb B}
\def\N{\mathbb N}

\def\NN{\mathbb N}
\def\CC{\mathbb C}
\def\CC{\mathbb C}
\def\DD{\mathbb D}
\def\R{\mathbb R}
\def\RR{\mathbb R}
\def\ZZ{\mathbb Z}

\def\P{\mathcal P}
\def\Zeta{\mathcal Z}
\def\L{\mathcal L}
\def\Ca{\mathcal C_\alpha}
\textwidth 16cm

\textheight 21cm

\oddsidemargin 0.4cm

\evensidemargin 0.4cm

\begin{document}

\title[Two Koopman semigroups on discrete Lebesgue spaces]
{Two Koopman semigroups on discrete Lebesgue spaces }

\author{Pedro J. Miana}
\address{Departamento de
Matem\'aticas, Instituto Universitario de Matem\'aticas y Aplicaciones, Universidad de Zaragoza,  50009 Zaragoza, Spain.}
 \email{pjmiana@unizar.es}

\thanks{Author has been partially supported by PID2022-137294NB-I00, DGI-FEDER, of the MCEI and Project E48-20R, Gobierno de Arag\'on, Spain}

\subjclass[2020]{Primary 47B33, 47D06; Secondary 33C45, 40A05,  47A10}

\dedicatory{To my dear father Francisco Miana Garc{\'i}a}


\keywords{ Koopman semigroups; Spectrum and resolvent sets; Poisson-Charlier orthogonal polynomials; Ces\`aro-like operators; Lebesgue spaces}

\begin{abstract} In this paper we are interested to connect Koopman  semigroups in Lebesgue funcion spaces $L^p(\R^+)$ and $C_0$-semigroups in Lebesgue sequence spaces $\ell^p$ for $1\le p < \infty$. To get this we use certain Poisson
transformation
 ${\P}: L^p(\mathbb{R}^+)\to \ell^p$ and its adjoint ${\P}^*$ which allows carry semigroup properties from one space to the other one. Two Koopman  semigroups on $\ell^p$ are presented and linked to the standard Koopman semigroup $T_p(t)f(r):= e^{-{t\over p}}f(e^{-t}r)$ and
$S_{p}(t)f(r):= e^{-t\over p}f(e^{-t}r+1-e^{-t})$ for $t,r>0$ on $L^p(\R^+)$. In the last section we  introduce Ces\`aro-like operators subordinated to these Koopman semigroups on  $\ell^p$.
\end{abstract}

\date{}

\maketitle

\section*{Introduction}

\setcounter{theorem}{0}
\setcounter{equation}{0}

Composition operators allow to face up fundamental questions about certain operators with elegant classical results from functional analysis, complex variable, and harmonic analysis.  In dynamic systems, composition operators are usually named as Koopman operators, in honor of the Franco-American mathematician Bernard O. Koopman. Koopman introduced these operators  to join classical Hamiltonian mechanics with the theory of Hilbert spaces and their linear transformations (\cite{Koopman}). Within this framework, the canonical resolution of the identity (usually called the ``spectrum of the dynamical system") is introduced and described in terms of a one-parameter group of unitary (and composition) operators in Hilbert space.

Take $K$ a compact and $X$ a locally compact spaces.  A mapping $\phi: [0,\infty)\,\,  \times K \to K$ is said a semiflow if for each $t\ge 0$ the mapping $\phi_t$ defined by $\phi_t(x):=\phi(t,x)$ is continuous, $\phi_0(x)=x$  and $\phi_s\circ \phi_t= \phi_{s+t}$ for $s,t\ge 0$.

In this topic, there is an abundance of literature available. The classic monographic \cite{Nagel} develops a deep  treatment of  positive semigroups in lattice spaces, and it characterizes continuous flows through semigroups of composition operators and their infinitesimal generators. A precise study of Koopman operators in $L^p({\bf X})$, with applications to ergodicity in measure-preserving systems, can be found in \cite{EFHN}.  In a finite measure space, reference \cite{EGK} characterizes  Koopman semigroups on $L^p({\bf X})$ for measure preserving semiflows. Furthermore, it shows that  measure-preserving flows on standard probability spaces and continuous flows on compact Borel probability spaces are equivalent. Recently  three specific semiflows  $\phi_t, \psi_t,\varphi_t:[0,\infty)\to [0,\infty)$ on the real half-line given by
$$
\phi_t(r):=e^{-t}r+1-e^{-t},\
\psi_t(r):= \frac{e^t r}{1+r(e^t-1)}, \  \varphi_t(r):= \frac{(1 + e^t)r -1 + e^t}{(-1 + e^t) r + 1 + e^t},
$$
for $t,r>0$ are considered in \cite{MP} to define and study weight Koopman semigroups  on $L^p(\RR^+)$ and some additional Sobolev-Lebesgue spaces.

%
%
%

On the other hand, the theory of continuous one-parameter semigroups of analytic self-maps on the unit disk has received significant attention in last decades. Semigroups of holomorphic self-maps on the unit disk (or on the complex half-plane or other domains) are closely connected with the theory of composition operators due to each one-parameter semigroup of holomorphic self-maps induces a semigroup of composition operators on certain holomorphic function spaces. This connection transfers functional analytic properties, such as compactness, periodicity or spectral properties, to corresponding dynamical questions about semigroups. The excellent monograph \cite{BCD} gives a thorough and comprehensive overview of the current state of the art in this interesting and complicated  field. Several well-known examples of semigroups of holomorphic semiflows are discussed in the excellent survey \cite{Si}.

Corresponding results on discrete Lebesgue space $\ell^p$ are unknown. Moreover, a rigorous definition of semiflows and  Koopman semigroups on $\ell^p$ seem impossible to achieve. However we present in this paper (sections 3.2 and 4) two concrete examples, $(\frak{T}_{p}(t))_{t>0}$ and $(\frak{T}_{\Delta,p}(t))_{t>0},$ of $C_0$-semigroups  on $\ell^p$ which are strongly linked to  Koopman semigroups $T_p(t)f(r):= e^{-{t\over p}}f(e^{-t}r)$ and
$S_{p}(t)f(r):= e^{-t\over p}f(e^{-t}r+1-e^{-t})$ for $t,r>0$ on $L^p(\R^+)$. It look likes natural consider these $C_0$-semigroups  as canonical examples of ``Koopman semigroups'' on $\ell^p$.

To establish this connection from $L^p(\RR^+)$ to $\ell^p$, we consider the Poisson transformation ${\P}: L^p(\mathbb{R}^+)\to \ell^p$ defined by
$$
{\P}(f)(n):=\int_0^\infty f(t) e^{-t}{t^n\over n!}dt, \qquad n \ge 0, \quad f\in L^p(\mathbb{R}^+),
$$
and its adjoint operator ${\P}^*:  \ell^p \to L^p(\mathbb{R}^+)$ given by
$$
{\P}^*(a)(t):=e^{-t}\sum_{n=0}^\infty a(n){t^n\over n!}, \qquad t\in \R^+, \quad a=(a(n))_{n\ge 0}\in \ell^p.
$$
Among other fields, both operators have appeared  in functional analysis, approximation theory and stochastic calculus. For example, the classical  Post-Widder operator (also called Gamma operator in \cite{LM} and in later references),
$$
P_{\lambda}(f,u):={(\lambda/u)^{\lambda}\over \Gamma(\lambda)}\int_0^\infty e^{-(\lambda t/u)} t^{\lambda-1} f(t)dt= {1\over \Gamma(\lambda)}\int_0^\infty e^{-s} {s^{\lambda-1}}f({u\over \lambda}s)ds, \qquad u, \lambda\in (0,\infty),
$$
 (\cite[p. 288]{Wi}) includes the Poisson transformation $\P$, i.e, $P_{n+1}(f,n+1)= {\P}(f)(n)$ for $n>0$. The Sz\'asz-Mirakyan operator
 $$
S_x(f,t):=e^{-tx}\sum_{n=0}^\infty f({n\over x}){(tx)^n\over n!}, \qquad t\ge 0, x>0,$$
 is a positive linear operator which uniformly approximates a certain class of continuous functions on the half line (\cite{St, Sz}). Note that
$S_1(f,t)= \P^*(\tilde f)(t)$ for $t\ge 0$ where the sequence $\tilde f$ is given by  $\tilde f(n):=f(n)$ for $n\ge 0$. In other context, the analysis of the dynamics of the semigroup solutions of certain partial differential equations are studied  on  Herzog type space of analytic functions defined by
$$
X_\rho:=\left\{ f: \RR\to \CC\, \, :\,\, f(x)=\sum_{n=0}^\infty {a_n\rho^n\over n!}x^n, \quad (a_n)_{n\ge 0}\in c_0(\NN_0)\right\},
$$
for some $\rho>0$, see for example \cite{H} and later references.

The article is self-contained and includes basic results that aid in its understanding. It is organized as follows. In the first section, we consider a modified Beta  function defined by
$$
\B_1(u,v):= \int_0^1(1-t)^{u-1}t^{v-1}e^{-t}dt, \qquad u, v\in \CC^+.
$$
and  a family of Poisson-Charlier orthogonal polynomials given by the recurrence relation $$p_{n+1}(z):= (z+n+1)p_n(z)-np_{n-1}(z), \qquad n\ge 2,$$ $p_0(z):=1 $ and
$p_1(z):=z+1$. These functions are connect with the resolvent and the point spectrum of certain finite difference operators, see Theorem \ref{resolvent} (i) and (iii).

In the second we  study the Poisson transformation ${\P}$ and its adjoint ${\P^*}$ which we have mentioned above. We have decided to follow the terminology presented in \cite{Li} with is based in the Poisson distribution
$$
{\frak p}_n(t):=e^{-t}{t^n\over n!}, \qquad t\ge 0, \quad n \in \NN_0.
$$
Algebraic properties of operators $\P$ and $\P^*$ are shown in Theorem \ref{convoss} and its connections with the Laplace and Zeta transform in Proposition \ref{conne}.

Two different families of $C_0$-semigroups in the Lebesgue sequence space $\ell^p$ are treated in the third section. First examples are convolution semigroups generated by forward difference operator $\Delta(a)(n):=a(n+1)-a(n)$ and the classical backward difference operator $\nabla(a)(n):=a(n-1)-a(n)$. These semigroups are well-known and can be founded in a large number of monographics and papers about finite differences, see for example \cite{GLM} and reference therein. Note that  $\nabla(\P(f))=-\P(f')$ and  $\P^*(\Delta (a))= (\P^*(a))'$, see Theorem \ref{eliz}.

First examples of Koopman semigroups  on $\ell^p$,  $(\frak{T}_{p}(t))_{t>0}$ and  $(\frak{S}_{p}(t))_{t>0},$ are generated by
\begin{eqnarray*}
 \frak{A}_p(a)(n)&:=&n (a(n-1)-a(n))-{1\over p}a(n),  \, \, n\ge 1;  \qquad \frak{A}_p(a)(0):=- {1\over p}a(0); \cr
\frak{B}_p(a)(n)&:=&(n+1)a(n+1)-na(n)-(1-{1\over p})a(n), \, \, n\ge 0,
\end{eqnarray*}
and were  introduced in \cite{AM2018}. Note that these operators are densely defined on $\ell^p$ and does not commute with $\Delta$ and $\nabla$ respectly (Remark \ref{notcommute}),  $\frak{B}_p(\P(f))= -\P(\Lambda_p(f))$ and $\P^*(\frak{A}_p(a))= \Lambda_p(\P^*(a))$ where $\Lambda_p$ is the infinitesimal generator of Koopman semigroup $T_p(t)f(r):= e^{-{t\over p}}f(e^{-t}r)$ on $L^p(\R^+)$.

A second example of Koopman semigroups, $(\frak{T}_{\Delta,p}(t))_{t>0}$ and $(\frak{S}_{\nabla,p}(t))_{t>0}$  on $\ell^p$ is presented in forth section. Its infinitesimal generator ${\frak A}_{\Delta, p}$ and ${\frak B}_{\nabla, p}$ are bounded perturbations of ${\frak A}_{p}$ and ${\frak B}_ p$, i.e., ${\frak A}_{\Delta, p}= {\frak A}_{p}+\Delta$ and ${\frak B}_{\Delta, p}= {\frak B}_{ p}+\nabla$ (Theorem \ref{inge}). The theory of general perturbed $C_0$-semigroup has received a special attention in the literature, see \cite[Chapter III]{En-Na-00} and reference therein.
Although  in general the explicit expression of the perturbed $C_0$-semigroup is not known,  we give them in this case in Definition \ref{deff}. Note that   $ \P^*\circ\frak{T}_{\Delta,p}(t)=  S_p(t)\circ \P^*$ for $t>0$ where $S_{p}(t)f(r):= e^{-t\over p}f(e^{-t}r+1-e^{-t})$ on $L^p(\R^+)$ (Theorem \ref{conmutante}).

In Table 1 we collect  $C_0$-semigroups and its infinitesimal generators which are considered  in sections \ref{savi}, \ref{savi2} and \ref{savi3}.

\begin{table}[h]\label{table1}
\begin{center}
 \caption{$C_0$-semigroups and its infinitesimal generators on $\ell^p$  }
\begin{tabular}{ |p{1.25cm}||p{11cm}|p{2cm}| }
 \hline
 $T(t)$& $T(t)a(n)$ & $A$   \\
 \hline
$e^{t\Delta }$   &  $e^{-t}\sum_{j\ge 0}^\infty a(j+n){t^j\over j!}$ &$\Delta$  \\
\hline
$e^{t\nabla}$   &  $e^{-t}\sum_{j\ge 0}^n a(j){t^{n-j}\over (n-j)!}$ &$\nabla$    \\
\hline
$\frak{T}_p(t)$   &  $e^{-{t\over p}}\sum_{j=0}^n{n \choose j}e^{-tj}(1-e^{-t})^{n-j}a(j)$   &$\frak{A}_p$ \\
\hline
$\frak{S}_p(t)$   &  $e^{-t(1-{1\over p})}e^{-tn}\sum_{j=n}^{\infty}{j\choose n}(1-e^{-t})^{j-n}a(j)$   &$\frak{B}_p$ \\
\hline
$\frak{T}_{\Delta,p}(t)$   &  $e^{-({t\over p}+1-e^{-t})}\sum_{j=0}^l{l \choose j}(1-e^{-t})^{l-j}e^{-tj}\sum_{n=j}^\infty{(1-e^{-t})^{n-j}\over (n-j)!}a(n)$   &${\frak A}_p+\Delta$ \\
\hline
$\frak{S}_{\nabla,p}(t)$   &  {$\small{e^{-(t(1-{1\over p})+1-e^{-t})}\sum_{j=0}^l{(1-e^{-t})^{l-j}\over (l-j)!}e^{-tj}\sum_{n=j}^\infty{n \choose j}(1-e^{-t})^{n-j}a(n)}$ }  &${\frak B}_p+\nabla$  \\
\hline
\end{tabular}
\end{center}
\end{table}

Similarly, we may collect also  $C_0$-semigroups and its infinitesimal generators on $L^p(\R^+)$ in Table 2. Connections between $C_0$-semigroups in $\ell^p$ and $L^p(\R^+)$ are presented in Theorems \ref{eliz}, \ref{intert2} and \ref{conmutante}.

\begin{table}[h]
\begin{center}
 \caption{$C_0$-semigroups and its infinitesimal generators on $L^p(\R^+)$   }
\begin{tabular}{ |p{1.25cm}||p{11cm}|p{2cm}| }
 \hline
 $T(t)$& $T(t)f(s)$ & $A$   \\
 \hline
$T_{left}(t)$   &  $f(s+t)$ &${d\over ds}$  \\
\hline
$T_{right}(t)$   &  $f(s-t)\chi_{(t,\infty)}(s)$ &${d^0\over ds}$    \\
\hline
$T_p(t)$   &  $e^{-{t\over p}}f(e^{-t}r)$   &$\Lambda_p$ \\
\hline
$S_{p}(t)$   &  $e^{-t\over p}f(e^{-t}s+1-e^{-t})$   &$\Lambda_p+{d\over ds}$ \\
\hline
$R_{p}(t)$   &  {$e^{t\over p}f(e^{t}s+1-e^{t})\chi_{(1-e^{-t}, \infty)}(s)$ }  &$-\Lambda_p+{d^0\over ds}$  \\
\hline
\end{tabular}
\end{center}
\end{table}

In the fifth section we consider the following integral subordination
$$
C^T_{\mu, \nu}(x):=\int_0^\infty e^{-\mu t} (1-e^{-t})^{\nu-1}T({t})(x)dt, \qquad x\in X, \qquad \mu, \nu>0,
$$
where $(T(t))_{t>0}$ is an uniformly bounded $C_0$-semigroup on a Banach space $X$. Then we prove directly that  ${\mathcal P}^*\circ\frak{C}_{\alpha}=  {\mathcal{C}}_{\alpha}\circ {\mathcal P}^*$ (Corollary \ref{kkl}) where
\begin{eqnarray*}
\Ca f (s)&=& \frac{\alpha}{s^\alpha} \int_0^s (s-u)^{\alpha-1} f(u) \, du, \quad s\ge 0,\cr
\frak{C}_{\alpha}a(n)&:=&\frac{\alpha}{\Gamma(\alpha+1+n)}\sum_{j=0}^n{\Gamma(n-j+\alpha)\over (n-j)!}a(j), \quad n\ge 0.
\end{eqnarray*}
We introduce operators $\frak{c}^{\Delta,p}_{\mu, \nu}$ and
$\frak{c}^{\nabla,p}_{\mu, \nu}$  and show that ${\mathcal P}\circ  C_{\mu, \nu}^{R_p}=\frak{c}^{\nabla,p}_{\mu, \nu}\circ {\mathcal P}$  and  ${\mathcal P}^*\circ\frak{c}^{\Delta,p}_{\mu, \nu}=  C_{\mu, \nu}^{S_p}\circ {\mathcal P}^*$ where $C_{\mu, \nu}^{S_p}$ and $C_{\mu, \nu}^{R_p}$
are Chen fractional integral, Corollary \ref{zzz}.

\medskip

\noindent{\bf Notation.} The set $\CC_{-}=\{z\in \CC\,\, :\,\, \Re z<0\}$ and $\CC_{+}=\{z\in \CC\,\, :\,\, \Re z>0\}$. Given $1\leq p<\infty,$ let $\ell^p$ be the set of $p$-Lebesgue space of sequences, that is, $a=(a(n))_{n\ge 0}\subset \CC$
$$
\Vert a \Vert_p:=\left(\sum_{n=0}^\infty \vert a(n)\vert^p\right)^{1\over p}<\infty,
$$
for $1\le p<\infty$ and $\Vert a\Vert_\infty:=\sup_n\vert a(n)\vert<\infty.$ The Lebesgue space $\ell^p$ is a module for the algebra $\ell^1$ and $a\ast b \in \ell^p$ where
$$
(a\ast b)(n)=\sum_{j=0}^n a(n-j)b(j), \qquad n\ge 0\qquad a\in \ell^p,\qquad b\in \ell^1,
$$
for $1\le p\le \infty$.

We write by $\delta_{n}$ for $n\in \ZZ$ the usual Dirac measure on $\ZZ$  defined by $\delta_n(m)= 1$ in the case $n=m$ and $\delta_n(m)= 0$ in other case. Note that $\delta_n \in \ell^p$ for $n\ge 0$ and $\delta_n\ast a$ is defined for any $n\in \ZZ$ in the following way
$$
(\delta_n\ast a)(m)=\begin{cases}  a(m-n), \quad & m\ge n,\\0,\quad & m<n.\end{cases}
$$
We write by $\delta_{-1}$ and $\delta_1$ the usual forward and backward shifts operator on $\ell^p$ space, i.e. $\delta_{-1}, \delta_1: \ell^p\to \ell^p$ given by $(\delta_{-1}\ast a)(n)=a(n+1)$ for $n\ge 0$ and $(\delta_{1}\ast a)(n)=a(n-1)$ for $n\ge 1 $ and $(\delta_{1}\ast a)(0)=0$.

 Let $(k^\alpha(n))_{n\ge 0}$ be the Ces\`aro numbers defined by
$$
k^\alpha(n)={\alpha(\alpha+1)\dots(\alpha+n-1)\over n!}={\Gamma(n+\alpha)\over \Gamma(\alpha)n!}, \qquad n\ge 0, \quad \alpha\in \R,
$$
see \cite[Vol. I. p.77]{[Zi]}. For $\alpha \in \R\setminus\{0,-1,-2, \dots\}$, we have that
$$
k^\alpha(n)= {n^{\alpha-1}\over \Gamma(\alpha)}\left(1+O({1\over n})\right), \qquad n \in \NN,
$$
see \cite[Vol. I. p.77 (1.18)]{[Zi]} and then $(k^\alpha(n))_{n\ge 0}\in \ell^p$ if and only if $\alpha<1-{1\over p}$ for $1\le p<\infty$ and $(k^\alpha(n))_{n\ge 0}\in \ell^\infty$ for $\alpha\le 1$. Also the kernel $(k^\alpha(n))_{n\ge 0}$ could be defined by the following generating formula
$$
\sum_{n=0}^\infty k^\alpha(n)z^n={1\over (1-z)^\alpha}, \qquad \vert z\vert <1.
$$

Given a set $I$, the characteristic function $\chi_I$ is defined by
$$
\chi_I(u)=\begin{cases}  0, \quad & u\not \in I,\\1, & u \in I.\end{cases}
$$

The functions $\Gamma$ and $\B$ is the usual Euler functions
$$
\Gamma(u)= \int_0^\infty t^{u -1}e^{-t} dt, \,\, \B (u,v)= \int_0^1(1-t)^{u-1}t^{v-1}dt=\int_0^\infty(1-e^{-s})^{u-1}e^{-sv}ds={\Gamma(u)\Gamma(v)\over \Gamma(u+v)}, $$
 for  $\Re u,\Re v>0.$

The $\Zeta$-transform of a sequence $a=(a(n))_{n\ge 0}$ is given by the Taylor series
$$
\Zeta(a)(z):=\sum_{n=0}^\infty a(n)z^n, \qquad z\in \DD:=\{z\in \CC \, :\, \vert z\vert < 1\}.
$$

The function ${\mathcal L}(f)$ is the usual Laplace transform of a measurable function $f:\R^+\to \CC$ defined as
$$
{\mathcal L}(f)(z):=\int_0^\infty f(t)e^{-zt}dt, \qquad z\in \CC^+.
$$

We write by $(X, \Vert \quad \Vert)$ a Banach space and ${\mathcal B}(X)$ is the set of linear and bounded operator on $X$. Given $(A, D(A))$ a closed operator on $X$, we denote by $\sigma_{point}(A)$ the point spectrum, $ \sigma(A)$ the  spectrum set and $(\lambda-A)^{-1}$ the resolvent operator for $\lambda \in \rho (A)$, the resolvent set. \label{conne}

\section{Modified Beta  function and Poisson-Charlier orthogonal polynomials}
\label{Charlier}

In this section we are interested in a modified beta function $\B_1(\cdot,\cdot)$ (which may expressed in terms of Kummer function) and in  a particular case of Poisson-Charlier polynomials $(p_n))_{n\ge 0}$.  These preliminary results will be useful in next sections.

\begin{definition}\label{Chat}{\rm We consider the modified Beta Euler function $\B_1$ defined by
$$
\B_1(u,v):= \int_0^1(1-t)^{u-1}t^{v-1}e^{-t}dt, \qquad u, v\in \CC^+.
$$
 }
\end{definition}
Note that
$$
\B_1(u,v)= e^{-1}\B(u,v)\,_1F_1(u, u+v,1), \qquad u, v\in \CC^+,
$$
where $\,_1F_1(a, b;z)$ is the Kummer function defined by
$$
\,_1F_1(a, c;z):={\Gamma(c)\over \Gamma(a)}\sum_{n=0}^\infty{\Gamma(a+n)\over \Gamma(c+n)}{z^n\over n!}= {\Gamma(c)\over \Gamma(a)\Gamma(c-a)}\int_0^1(1-t)^{c-a-1}t^{a-1}e^{zt}dt,
$$
for $\Re c >\Re a >0$, see definition and properties in \cite[Chapter VI]{MOS}. Some basic facts of this function are presented in the next proposition.

 \begin{proposition}\label{bb} \label{beta1} The function $\B_1$ verifies the following properties.
 \begin{itemize}
 \item[(i)] $ \vert \B_1(u,v)\vert\le    \B(\Re u,\Re v)$ for $u, v\in \CC^+$.
 \item[(ii)] $ e^{-1} \B(u,v)\le  \B_1(u,v)\le \B(u,v)$ for $u, v>0$.
 \item[(iii)] $\B_1(u+1,v+1)= v\B_1(u+1,v)-u\B_1(u,v+1)$ for $u, v\in \CC^+$.
 \item[(iv)] $\displaystyle{\B_1(u,v)= e^{-1}\sum_{n=0}^\infty {\B(u+n, v)\over n!}}$  for $u, v\in \CC^+$.
 \end{itemize}
 \end{proposition}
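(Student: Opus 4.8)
The plan is to derive all four items directly from the defining integral, which is absolutely convergent on $(0,1)$ whenever $\Re u,\Re v>0$. For (i), I would bring the modulus inside the integral and use that on the open interval $(0,1)$ the bases $1-t$ and $t$ are positive reals, so $|(1-t)^{u-1}|=(1-t)^{\Re u-1}$ and $|t^{v-1}|=t^{\Re v-1}$ (principal branches, harmless here), together with $e^{-t}\le 1$ on $[0,1]$; this majorizes $|\B_1(u,v)|$ by $\int_0^1(1-t)^{\Re u-1}t^{\Re v-1}\,dt=\B(\Re u,\Re v)$. Item (ii) is the same estimate specialized to real $u,v>0$, where the integrand $(1-t)^{u-1}t^{v-1}$ is nonnegative and the two-sided bound $e^{-1}\le e^{-t}\le 1$ on $[0,1]$ yields both inequalities simultaneously upon integration.

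For (iii) the key observation is that the product rule, applied to $g(t):=(1-t)^{u}t^{v}e^{-t}$, reproduces precisely the three integrands occurring in the recurrence. Differentiating gives
$$g'(t)=-u(1-t)^{u-1}t^{v}e^{-t}+v(1-t)^{u}t^{v-1}e^{-t}-(1-t)^{u}t^{v}e^{-t},$$
and since $\Re u,\Re v>0$ the boundary values satisfy $g(0)=g(1)=0$, so $\int_0^1 g'(t)\,dt=0$. Matching exponents to the definition, the three integrals are $\B_1(u,v+1)$, $\B_1(u+1,v)$ and $\B_1(u+1,v+1)$ respectively, whence $v\B_1(u+1,v)-u\B_1(u,v+1)-\B_1(u+1,v+1)=0$, which is exactly the asserted identity.

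For (iv) I would expand the exponential as $e^{-t}=e^{-1}e^{1-t}=e^{-1}\sum_{n=0}^\infty (1-t)^n/n!$, interchange the sum and the integral, and recognize $\int_0^1(1-t)^{u+n-1}t^{v-1}\,dt=\B(u+n,v)$. The swap is the only delicate point: the exponential series converges uniformly to $e^{1-t}\le e$ on $[0,1]$, and $(1-t)^{\Re u-1}t^{\Re v-1}$ is integrable there, so dominated convergence (monotone convergence in the real case) legitimizes the term-by-term integration. On the whole the proof is routine and the main obstacle is bookkeeping rather than depth; the two places that genuinely require the hypothesis $u,v\in\CC^+$ are the vanishing of the boundary terms in (iii), which fails on the boundary of $\CC^+$, and the justification of the interchange in (iv) for complex parameters, both controlled by the same integrability bound already used in (i).
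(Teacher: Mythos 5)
Your proof is correct, and for items (i)--(iii) it coincides with the paper's argument: the paper dismisses (i) and (ii) as straightforward (your modulus and $e^{-1}\le e^{-t}\le 1$ estimates are exactly what is meant), and its proof of (iii) is an integration by parts, which is precisely your ``integrate the product rule for $g(t)=(1-t)^{u}t^{v}e^{-t}$ and use vanishing boundary terms'' argument in different words. The one genuine divergence is item (iv). The paper never returns to the integral: it invokes the identity $\B_1(u,v)=e^{-1}\B(u,v)\,{}_1F_1(u,u+v,1)$ stated immediately after the definition of $\B_1$, expands the Kummer series ${}_1F_1(a,c;z)={\Gamma(c)\over \Gamma(a)}\sum_{n\ge 0}{\Gamma(a+n)\over \Gamma(c+n)}{z^n\over n!}$ at $z=1$, and collapses the Gamma factors into $\B(u+n,v)$. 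You instead expand $e^{-t}=e^{-1}\sum_{n\ge 0}(1-t)^{n}/n!$ inside the integral and justify the term-by-term integration by dominated convergence, the partial sums being nonnegative and bounded by $e^{1-t}\le e$, so that all integrands are dominated by the integrable function $(1-t)^{\Re u-1}t^{\Re v-1}$ up to a constant. Your route is more elementary and self-contained: it never needs the hypergeometric representation, and in effect it reproves the Kummer identity the paper quotes, whereas the paper's route is shorter on the page because it delegates both the integral representation of ${}_1F_1$ and the convergence issues to the cited special-functions literature. Either argument is complete; yours makes explicit the analytic justification that the paper leaves implicit in the ${}_1F_1$ formalism.
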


 \begin{proof} The proof of item (i) and (ii) are straightforward. To show (iii), we integrate by parts to get
 $$
 \B_1(u+1,v+1)= v\int_0^1(1-t)^{u}t^{v-1}e^{-t}dt-u\int_0^1(1-t)^{u-1}t^{v}e^{-t}dt=v\B_1(u+1,v)-u\B_1(u,v+1)
 $$
 for $u, v\in \CC^+$. Finally to check (iv), we have that
 \begin{eqnarray*}
 \B_1(u,v)&=& e^{-1}\B(u,v)\,_1F_1(u, u+v,1)=e^{-1}\B(u,v) {\Gamma(u+v)\over \Gamma(u)}\sum_{n=0}^\infty{\Gamma(u+n)\over \Gamma(u+v+n)}{z^n\over n!}\cr
 &=&e^{-1}\sum_{n=0}^\infty {\B(u+n, v)\over n!},
  \end{eqnarray*}
  for $u, v\in \CC^+$.\end{proof}

\begin{remark}{\rm We may check some particular values $\B_1(u,v)$ for $u, v\in \NN$, in particular
\begin{eqnarray*}
\B_1(1,1)&=&1-e^{-1},\cr
\B_1(n+1,1)&=& 1-n\B_1(n,1)=a(n)+(-1)^{n-1}(n-1)!e^{-1}, \quad n\ge 1, \cr
\B_1(1,n)&=&(n-1)!-e^{-1}\sum_{j=0}^{n-1}{(n-1)!\over j!}=(n-1)!-e^{-1}b(n-1)
\end{eqnarray*}
where  sequences $(a(n))_{n\ge 0}$ and $(b(n))_{n\ge 0}$ are defined by $a(0)=0$ and $a(n) = 1 - n a(n-1)·$ and are denoted by A182386  and A000522 in the well-known  On-line Encyclopedia of Integer Sequences (OEIS) by Sloane.

For $n, m\in \NN$, we iterate Proposition\ref{bb} (iii) to conclude that  $\B_1(n,m)= a_{n,m}+b_{n,m}e^{-1}$ with $a_{n,m}, b_{n,m} \in \ZZ$. It would be interested to identity the values of $a_{n,m}$ and $b_{n,m}$ in terms of $n$ and $m$. In Table 3 we present some values of $\B_1(n,m)$.

\medskip

\begin{table}[h]
\begin{center}
 \caption{ Values of $\B_1(n,m)$ for $1\le n,m \le 4$ }
\begin{tabular}{ |p{2cm}||p{2cm}|p{2.5cm}|p{2.5cm}|p{3cm}| }
 \hline
 $\B_1(n,m)$& $m=1$ &$m=2$ &$m=3$ &$m=4$\\
 \hline
$n=1$   & $1-e^{-1}$    &$1-2e^{-1}$&   $2-5e^{-1}$&$6-16e^{-1}$\\
$n=2$    & $e^{-1}$  &$-1+3e^{-1}$&$-4+11e^{-1}$&$-18+49e^{-1}$\\
$n=3$ &$1-2e^{-1}$ & $3-8e^{-1}$& $2(7-19e^{-1})$& $78-212e^{-1}$\\
$n=4$     &$-2+6e^{-1}$ &$-11+30e^{-1}$&$-64+174e^{-1}$& $6(-35+95e^{-1})$\\
 \hline
\end{tabular}
\end{center}
\end{table}
}
\end{remark}

In the rest of this section, we consider a particular case of Poisson-Charlier polynomials.

\begin{definition}\label{Chat}{\rm For $n\in \N_0$, we consider the sequence of polynomials $(p_n)_{n\ge 0}$ defined by $p_0(z)=1$ and
$$
p_n(z):=\sum_{j=0}^n{ n\choose j}z(z+1)\cdots(z+j-1)=\sum_{j=0}^n{ n\choose j}{\Gamma(j+z)\over \Gamma(z)}=n!\sum_{j=0}^n{ k^z(j)\over (n-j)!} , \qquad n\ge 1.
$$
 }
\end{definition}

The first values of this sequence are the following ones
\begin{eqnarray*}
p_0(z)&=&1,\cr
p_1(z)&=&z+1,\cr
p_2(z)&=&z^2+3z+1,\cr
p_3(z)&=&z^3+6z^2+8z+1,\cr
p_4(z)&=&z^4+10z^3+29z^2+24z+1,\cr
p_5(z)&=&z^5+15z^4+75z^3+145z^2+89z+1.
\end{eqnarray*}
Note that $\B_1(1,n)=(n-1)!-e^{-1}p_{n-1}(1)$ for $n\ge 1$. Some basic properties of these polynomials are given in the next theorem.

\begin{theorem}\label{Charlier} The sequence of polynomials $(p_n)_{n\ge 0}$ verifies the following properties
\begin{itemize}
\item[(i)] The polynomials $p_n$ are monic, of degree $n$ and $p_n(0)=1$ for $n\ge 0$.
\item[(ii)] $p_{n+1}(z)= (z+n+1)p_n(z)-np_{n-1}(z)$ for $n\ge 1$.
\item[(iii)] For $\vert w\vert <1$ and $z\in \CC$, we have that
$$
\sum_{n=0}^\infty p_n(z){w^n\over n!}=e^w(1-w)^{-z}.
$$
\end{itemize}
 Let $\nabla$ be the backward difference operator $\nabla(f)(z):=f(z)-f(z-1)$.
\begin{itemize}
\item[(iv)] $\nabla(p_n)=np_{n-1}$ for $n\ge 1$.
\item[(v)]$\nabla^2(p_n)(z)-(z+n)\nabla(p_n)(z)+np_{n}(z)=0$ for $n\ge 0$ and $z\in \CC$.

\end{itemize}

\end{theorem}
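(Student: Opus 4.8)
The plan is to build everything from the generating function (iii), which I would establish first directly from the definition, and then read off the recurrence (ii) and the difference relations (iv)--(v). I begin with the elementary item (i). Writing $(z)_j:=z(z+1)\cdots(z+j-1)$ for the rising factorial, the summand ${n\choose j}(z)_j$ has degree exactly $j$ in $z$, so the top-degree term of $p_n$ comes only from $j=n$ and equals ${n\choose n}z^n=z^n$; hence $p_n$ is monic of degree $n$. Evaluating at $z=0$ kills every term with $j\ge 1$ (each contains the factor $z$), leaving $p_n(0)={n\choose 0}=1$.

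For (iii) I would exploit the last form of the definition, $p_n(z)=n!\sum_{j=0}^n k^z(j)/(n-j)!$, which exhibits $p_n(z)/n!$ as a convolution. Indeed,
$$
\sum_{n=0}^\infty p_n(z)\frac{w^n}{n!}=\sum_{n=0}^\infty w^n\sum_{j=0}^n \frac{k^z(j)}{(n-j)!}=\left(\sum_{j=0}^\infty k^z(j)w^j\right)\left(\sum_{m=0}^\infty \frac{w^m}{m!}\right),
$$
the last equality being the Cauchy product of two series that converge absolutely for $\vert w\vert<1$. By the generating formula for the Ces\`aro numbers recalled in the Notation, $\sum_{j\ge 0}k^z(j)w^j=(1-w)^{-z}$, while the second factor is $e^w$; this gives (iii). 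Differentiating $F(w,z):=e^w(1-w)^{-z}$ in $w$ yields the first-order relation $(1-w)\partial_w F=(1-w+z)F$. I would then substitute the power series $F=\sum_n p_n(z)w^n/n!$ into both sides and compare the coefficient of $w^n/n!$: the left-hand side contributes $p_{n+1}(z)-n\,p_n(z)$ and the right-hand side $(1+z)p_n(z)-n\,p_{n-1}(z)$ for $n\ge 1$, which rearranges to exactly the recurrence (ii).

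Finally, for (iv) I would use that $\nabla$ acts on the generating function by $\sum_n\nabla(p_n)(z)w^n/n!=F(w,z)-F(w,z-1)=\bigl(1-(1-w)\bigr)F(w,z)=wF(w,z)=\sum_n n\,p_{n-1}(z)w^n/n!$, so matching coefficients gives $\nabla(p_n)=n\,p_{n-1}$; alternatively this follows termwise from $(z)_j-(z-1)_j=j\,(z)_{j-1}$ together with ${n\choose j}j=n{n-1\choose j-1}$. Relation (v) is then immediate: applying (iv) twice gives $\nabla^2(p_n)=n(n-1)p_{n-2}$, and substituting this and $\nabla(p_n)=n\,p_{n-1}$ into the left-hand side of (v) reduces it to $n\bigl(p_n-(z+n)p_{n-1}+(n-1)p_{n-2}\bigr)$, which vanishes by the recurrence (ii) written with $n$ replaced by $n-1$.

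The only genuinely delicate point is the bookkeeping: in (ii) one must handle the index shifts and the $n=0,1$ boundary cases carefully when equating power-series coefficients, and throughout one should note that all series live in the disk $\vert w\vert<1$, where absolute convergence justifies both the Cauchy product and the termwise differentiation. None of the steps presents a real obstacle once the generating function (iii) is in hand.
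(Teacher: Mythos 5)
Your proof is correct, but it reverses the paper's logical order and is a genuinely different argument for (ii) and (iv). The paper proves the recurrence (ii) \emph{first}, purely algebraically from the defining sum: it uses $(z+n+1){n\choose j}-n{n-1\choose j}={n\choose j}(z+j+1)$, the identity $(z+j+1)\Gamma(j+z)=\Gamma(j+z+1)+\Gamma(j+z)$, and Pascal's rule to reassemble $p_{n+1}$ term by term; then it obtains the generating function (iii) by interchanging the order of summation (essentially your Cauchy product in different clothing), and proves (iv) termwise via $(z)_j-(z-1)_j=j\,(z)_{j-1}$ and ${n\choose j}j=n{n-1\choose j-1}$ --- exactly the alternative you mention in passing. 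You instead put (iii) first and extract (ii) from the differential relation $(1-w)\partial_w F=(1-w+z)F$ and (iv) from the functional equation $F(w,z-1)=(1-w)F(w,z)$; item (v) is handled the same way in both treatments, as a consequence of (iv) and the recurrence shifted to $n-1$ (the paper merely asserts this step, which you carry out explicitly). Your route buys economy and unity: a single identity drives (ii), (iv) and (v), and coefficient extraction replaces the binomial/Gamma bookkeeping. The cost is that it leans on (routine) analytic facts --- absolute convergence on $\vert w\vert<1$, termwise differentiation, uniqueness of power-series coefficients --- whereas the paper's proof of (ii) is a finite algebraic computation valid for each fixed $z$ with no limiting processes; your verification of the ODE and of the coefficient identities ($p_{n+1}-np_n$ on the left, $(1+z)p_n-np_{n-1}$ on the right for $n\ge 1$) checks out, so the argument is complete as written.
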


\begin{proof}The proof of item (i) is straightforward. To show (ii) note that,
\begin{eqnarray*}
&\,&(z+n+1)p_n(z)-np_{n-1}(z)=(z+n+1){\Gamma(z+n)\over \Gamma(z)}+\sum_{j=0}^{n-1}{ n\choose j}(z+j+1){\Gamma(j+z)\over \Gamma(z)}\cr
&\quad&=(z+n+1){\Gamma(z+n)\over \Gamma(z)}+\sum_{l=1}^{n}{ n\choose l-1}{\Gamma(l+z)\over \Gamma(z)}+\sum_{j=0}^{n-1}{ n\choose j}{\Gamma(j+z)\over \Gamma(z)}\cr
&\quad&={\Gamma(z+n+1)\over \Gamma(z)}+\sum_{l=1}^n{ n+1\choose l}{\Gamma(l+z)\over \Gamma(z)}+1=p_{n+1}(z).
\end{eqnarray*}
(iii) Take $\vert w\vert <1$ and $z\in \CC$. Then
\begin{eqnarray*}
\sum_{n=0}^\infty p_n(z){w^n\over n!}&=&\sum_{n=0}^\infty \sum_{j=0}^n{ n\choose j}{\Gamma(j+z)\over \Gamma(z)}{w^n\over n!}=\sum_{j=0}^\infty \left(\sum_{n=j}^\infty{w^n\over (n-j)!}\right){\Gamma(j+z)\over \Gamma(z)j!}\cr
&=&e^w\sum_{j=0}^\infty k^z(j)w^j=e^w(1-w)^{-z}.
\end{eqnarray*}

(iv) Note that
\begin{eqnarray*}
\nabla(p_n)(z)&=&\sum_{j=0}^n{ n\choose j}\left({\Gamma(j+z)\over \Gamma(z)}-{\Gamma(j+z-1)\over \Gamma(z-1)}\right)=\sum_{j=1}^n{ n\choose j}j{\Gamma(j-1+z)\over \Gamma(z)}\cr
&=& n\sum_{j=1}^n {(n-1)!\over (j-1)!(n-j)!}{\Gamma(j-1+z)\over \Gamma(z)}=np_{n-1}(z),
\end{eqnarray*}
for $n\ge 1$.  The item (v) is straightforward consequence from (iv) and (ii) and we conclude the proof.
\end{proof}

\begin{remark}{\rm
The classical monic Charlier  (or Poisson-Charlier) polinomials, $(C_n^{(a)})_{n\ge 0},$ are defined by the generating function
$$
e^{-aw}(1+w)^z= \sum_{n=0}^\infty C_n^{(a)}(z){w^n\over n!}, \qquad a\not=0.
$$
see \cite[Section VI.1]{Chihara}. Then $p_n(z)= (-1)^nC_n^{(1)}(-z)$ for $z\in \CC$. The recurrence relation in three terms given in Theorem \ref{Charlier} (ii) is a consequence of the general recurrence formula for Charlier polynomials,
$$
C_{n+1}^{(a)}(z)=(z-n-a)C_n^{(a)}(z)-anC_{n-1}^{(a)}(z),\qquad z\in \CC,
$$ see \cite[(VI.1.4)]{Chihara}.

Fixed $z\in \NN$, real positive sequences $(p_n(z))_{n>0}$ have appeared in big amount of different contexts. For example take $z=1$, the sequence $(p_n(1))_{n \ge 0}$ is the total number of ordered $k$-tuples for $0\le k\le n$ of distinct elements from an $n$-element set. The first values are  $1, 2,5,16,65, 326,\dots$ and it is denoted by A000522 in the  On-line Encyclopedia of Integer Sequences  by Sloane. For $z=2$,  we obtain the sequence denoted by A001339 which first values are $1, 3, 11, 49, 261, 1631, ...$

}
\end{remark}
Now we define polynomials \begin{equation}\label{qn}\displaystyle{q_n(z):={p_n(z)\over n!}}, \qquad  z\in \CC, \quad n\ge 0.\end{equation} In the following theorem includes additional properties for the sequence $(q_n)_{n\ge 0}$.

\begin{theorem}Let $(q_n)_{n\ge 0}$ polynomials give in \eqref{qn} and $1\le p\le \infty$. Then
\begin{itemize}
\item[(i)] The equality $(n+1)q_{n+1}(z)=(z+n+1)q_n(z)-q_{n-1}(z)$ holds for $z\in \CC$ and $n\ge 0$.
\item[(ii)] Then $q_n(z)= (k^z \ast \epsilon)(n)$ and  $(q_n(z))_{n\ge 0}\in \ell^p$ for $\Re z<1-{1\over p}$.
\end{itemize}
\end{theorem}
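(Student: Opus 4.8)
The plan is to obtain both items from the facts already proved for $(p_n)_{n\ge0}$ together with the module structure of $\ell^p$ over $\ell^1$, so that essentially no new computation with the polynomials is required. For item (i) I would start from the three-term recurrence $p_{n+1}(z)=(z+n+1)p_n(z)-np_{n-1}(z)$ of Theorem \ref{Charlier}(ii) and divide it by $(n+1)!$. Since $q_n=p_n/n!$, one has $p_n(z)/(n+1)!=q_n(z)/(n+1)$ and $p_{n-1}(z)/(n+1)!=q_{n-1}(z)/\bigl(n(n+1)\bigr)$, so the factor $n$ standing in front of $p_{n-1}$ cancels; multiplying through by $n+1$ then yields $(n+1)q_{n+1}(z)=(z+n+1)q_n(z)-q_{n-1}(z)$ for $n\ge1$. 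The value $n=0$ is checked by hand, with the convention $q_{-1}:=0$: indeed $q_1(z)=z+1=(z+1)q_0(z)$.

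For item (ii) the convolution identity is purely formal. From the last expression for $p_n$ in its definition, $p_n(z)=n!\sum_{j=0}^n k^z(j)/(n-j)!$, dividing by $n!$ gives $q_n(z)=\sum_{j=0}^n k^z(j)\,\epsilon(n-j)$, where $\epsilon(m)=1/m!$; by the definition of the (commutative) convolution product on $\ell^p$ this is exactly $(k^z\ast\epsilon)(n)$. The same identity may be read off the generating functions, since $\Zeta(q(z))(w)=e^w(1-w)^{-z}=\Zeta(\epsilon)(w)\,\Zeta(k^z)(w)$ by Theorem \ref{Charlier}(iii) and the generating formula for $k^z$ recalled in the Notation.

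It then remains to prove membership in $\ell^p$. The sequence $\epsilon=(1/n!)_{n\ge0}$ belongs to $\ell^1$ because $\sum_{n\ge0}1/n!=e$. For the Ces\`aro kernel, the asymptotic $k^z(n)=\bigl(n^{z-1}/\Gamma(z)\bigr)\bigl(1+O(1/n)\bigr)$, valid for $z\in\CC\setminus\{0,-1,-2,\dots\}$ exactly as in the real case recalled in the Notation, gives $|k^z(n)|\sim |\Gamma(z)|^{-1}n^{\Re z-1}$, so that $(k^z(n))_{n\ge0}\in\ell^p$ whenever $\Re z<1-1/p$ (the defining condition $p(\Re z-1)<-1$). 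Finally, since $\ell^p$ is a module over the convolution algebra $\ell^1$, the convolution of $k^z\in\ell^p$ with $\epsilon\in\ell^1$ again lies in $\ell^p$, which gives $(q_n(z))_{n\ge0}\in\ell^p$ for $\Re z<1-1/p$.

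I expect the only genuinely non-routine point to be the justification of the complex asymptotic for $k^z(n)$, and hence of the threshold $1-1/p$, since the Notation states the estimate only for real parameters. Once that is granted, the module property transfers the integrability from $k^z$ to $q(z)$ without altering the range, because convolving with the fixed $\ell^1$ sequence $\epsilon$ cannot worsen the summability exponent; everything else reduces to the recurrence of Theorem \ref{Charlier}(ii) and to the definition of $p_n$.
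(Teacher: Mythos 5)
Your proposal is correct and follows essentially the same route as the paper: item (i) by dividing the three-term recurrence of Theorem \ref{Charlier}(ii) by $(n+1)!$, and item (ii) via the identity $q_n(z)=(k^z\ast\epsilon)(n)$ with $\epsilon=(1/n!)_{n\ge 0}\in\ell^1$, $k^z\in\ell^p$ for $\Re z<1-\tfrac{1}{p}$, and the $\ell^1$-module property of $\ell^p$. In fact you supply details the paper leaves implicit, namely the $n=0$ case of the recurrence (with $q_{-1}:=0$) and the extension of the asymptotic $k^z(n)=\bigl(n^{z-1}/\Gamma(z)\bigr)\bigl(1+O(1/n)\bigr)$ from real to complex parameters, which the paper uses without comment.
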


\begin{proof} The item (i) is a easy consequence from Theorem \ref{Charlier} (ii). To show (ii), take $z\in \CC$ and we have that
$$
{p_n(z)\over n!}= \sum_{j=0}^n{ k^z(j)\over (n-j)! }= (k^z\ast \epsilon)(n), \qquad n\ge 0,
$$
where $\epsilon=({1\over n!})_{n\ge 0}$. Note that $\epsilon \in \ell^1$ and $k^z\in \ell^p$ for $\Re z <1-{1\over p}$. We conclude $(k^z\ast \epsilon)  \in \ell^p$ for $\Re z<1-{1\over p}$.
\end{proof}

\section{Poisson transformations on Lebesgue spaces}

\setcounter{theorem}{0}
\setcounter{equation}{0}

Given $1\leq p<\infty,$ let $L^p(\mathbb{R}^+)$ be the set of Lebesgue $p$-integrable functions, that is, $f$ is a measurable function and $$||f||_p:=\left(\int_0^\infty |f(t)|^pdt\right)^{1/p}<\infty.$$
We write by $(L^\infty(\mathbb{R}^+), \Vert \qquad\Vert_\infty )$ the measurable Lebesgue function which are almost bounded on $\R^+$ and
$$
\Vert f\Vert_\infty:=\esssup\{\vert f(t)\vert \, ;\vert t\in \R^+\}.
$$
Remind that the Lebesgue space $L^p(\mathbb{R}^+)$ is a module for the algebra $L^1(\mathbb{R}^+)$ and $f\ast g \in L^p(\mathbb{R}^+)$ where
$$
(f\ast g)(t)=\int_0^tf(t-s)g(s)ds, \qquad t\in \R^+,\qquad f\in L^p(\mathbb{R}^+),\quad g\in L^1(\mathbb{R}^+),
$$
for $1\le p\le \infty$.

In this section we consider the following transform called Poisson transformation in some text, see for example \cite{Li}.

\begin{definition}\label{Poiss}{\rm  Take  $1\le p\le \infty$. We introduce the operator  ${\P}: L^p(\mathbb{R}^+)\to \ell^p$ defined by
\begin{equation}\label{poi}
{\P}(f)(n):=\int_0^\infty f(t) e^{-t}{t^n\over n!}dt, \qquad n \ge 0, \quad f\in L^p(\mathbb{R}^+).
\end{equation}
Duality, we consider the operator  ${\P}^*:  \ell^p \to L^p(\mathbb{R}^+)$ given by
\begin{equation}\label{poid}
{\P}^*(a)(t):=e^{-t}\sum_{n=0}^\infty a(n){t^n\over n!}, \qquad t\in \R^+, \quad a=(a(n))_{n\ge 0}\in \ell^p.
\end{equation}}
\end{definition}

\noindent{\bf Examples.} We write by $e_{\lambda}(t):=e^{-\lambda t}$ for $\Re \lambda>0$. Then
\begin{equation}\label{exp}
{\P}(e_\lambda)(n)= \int_0^\infty  e^{-(\lambda +1)t}{t^n\over n!}dt={1\over (1+\lambda)^n}, \qquad n\ge 0.
\end{equation}
Conversely, take $\lambda \in \CC$ with $\vert \lambda\vert <1$ and $a_\lambda(n):={ \lambda^n}$ for $n\ge 0$. Then
$$
{\P}^*(a_\lambda)(t):=e^{-t}\sum_{n=0}^\infty {(\lambda t)^n\over n!}=e^{-(1-\lambda)t}, \qquad t\ge 0.
$$

In fact, operators ${\P}$ and ${\P}^*$ may be defined in  larger domains. \begin{itemize}
\item[(i)] Take also $\displaystyle{j_{\alpha}(t):={t^{\alpha-1}\over \Gamma(\alpha)}}$ for $\alpha, t>0$. Then
$$
\P(j_\alpha)(n)= \int_0^\infty {t^{\alpha-1}\over \Gamma(\alpha)}e^{-t}{t^n\over n!}dt= {\Gamma(n+\alpha)\over \Gamma(\alpha)n!}= k^\alpha(n), \qquad n\ge 0.
$$

\item[(ii)] Conversely
$$\P^*(k^\alpha)(t)= {e^{-t}\over\Gamma(\alpha)}\sum_{n=0}^\infty {\Gamma(n+\alpha)\over (n!)^2}{t^n}=e^{-t}E^{\alpha}_{1,1}(t), \qquad t\ge 0,
$$
where $\displaystyle{E^\gamma_{\alpha, \beta}(z)=\sum_{n=0}^\infty {\Gamma(n+\gamma)\over \Gamma(\gamma)}{z^n\over n! \Gamma(\alpha n+\beta)}}$ is the entire function, called Prabhakar function, and  introduced in \cite{Pra}. Note that, for $\alpha \in \NN$, $\P^*(k^\alpha)$ is a  polynomial of degree $\alpha-1$.

\item[(iii)]Fix $a\in \CC$, we consider the sequence $P_a=(p_n(a))_{n\ge 0}$ where the sequence $(p_n)_{n\ge 0}$ is given in Definition \ref{Chat}. Then $\P^*(P_a)(t)=(1-t)^{-a}$ for $t<1$, see Theorem \ref{Charlier} (iii).
    \end{itemize}

\smallskip

In the next theorem we present more algebraic properties of operators $\P$ and $\P^*$.

\begin{theorem} \label{convoss} Take $1\le p\le \infty$ and operators $\P$ and $\P^*$ defined by (\ref{poi}) and  (\ref{poid}). Then
\begin{itemize}
\item[(i)] The map $\P: L^p(\mathbb{R}^+)\to \ell^p$ is a linear bounded operator and $\Vert \P\Vert = 1$.
\item[(ii)]  $\P(f\ast g)= \P(f)\ast \P(g)$ for $f\in L^p(\R^+)$ and $g\in L^1(\R^+)$.
\item[(iii)] For $1<p\le \infty$, the map $\P^*: \ell^p\to L^p(\mathbb{R}^+)$ is the adjoint operator of $\P$  and $\Vert \P^*\Vert = 1$ for $1\le p\le \infty$.
\item[(iv)] $\P^*(\delta_1\ast(a\ast b))= \P^*(a)\ast \P^\ast(b)$ for $a\in \ell^p$ and $b\in \ell^1$.
\item[(v)] Given $ a=(a(n))_{n\ge 0}\in \ell^p$, then
$$
\P(\P^\ast (a))(m)= {1\over 2^{m+1}}\sum_{n=0}^\infty {k^{m+1}(n)\over 2^{n}}a(n), \qquad m\ge 0,
$$
where $(k^{m+1}(n))_{n\ge 0} $ are  Ces\`aro numbers of order $m+1$.

\item[(vi)] Given $ f\in L^p(\R^+)$, then
$$
\P^*(\P(f))(t)=e^{-t}\int_0^\infty f(s)e^{-s}I_0(2\sqrt{ts})ds, \qquad t>0,
$$
where the Bessel function $I_0$ is given by $\displaystyle{I_0(z)=\sum_{n=0}^\infty {(z/2)^{2n}\over (n!)^2}}$, see for example \cite[p. 66]{MOS}.
\end{itemize}

\end{theorem}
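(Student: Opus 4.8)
The plan rests on a single structural fact about the Poisson weights $\mathfrak{p}_n(t):=e^{-t}t^n/n!$: for each fixed $t>0$ they form a probability distribution in $n$, since $\sum_{n\ge0}\mathfrak{p}_n(t)=e^{-t}e^{t}=1$, while for each fixed $n$ one has $\int_0^\infty\mathfrak{p}_n(t)\,dt=n!/n!=1$. For the boundedness in (i) and (iii) I would first settle the endpoints. At $p=1$, Tonelli gives $\|\P f\|_1\le\int_0^\infty|f(t)|\sum_n\mathfrak{p}_n(t)\,dt=\|f\|_1$ and $\|\P^*a\|_1\le\sum_n|a(n)|\int_0^\infty\mathfrak{p}_n(t)\,dt=\|a\|_1$; at $p=\infty$, $|\P f(n)|\le\|f\|_\infty\int_0^\infty\mathfrak{p}_n(t)\,dt=\|f\|_\infty$ and $|\P^*a(t)|\le\|a\|_\infty\sum_n\mathfrak{p}_n(t)=\|a\|_\infty$. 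Riesz--Thorin interpolation between these then yields $\|\P\|_{p\to p}\le1$ and $\|\P^*\|_{p\to p}\le1$ for all $1\le p\le\infty$. For the reverse inequalities I would test against the exponentials: from \eqref{exp} and the explicit norm $\|e_\lambda\|_p^p=1/(p\lambda)$ one gets $\|\P e_\lambda\|_p^p/\|e_\lambda\|_p^p=p\lambda/\bigl(1-(1+\lambda)^{-p}\bigr)\to1$ as $\lambda\to0^+$, and dually, with $a_\lambda(n)=\lambda^n$ and $\|a_\lambda\|_p^p=1/(1-\lambda^p)$, one gets $\|\P^*a_\lambda\|_p^p/\|a_\lambda\|_p^p=(1-\lambda^p)/\bigl(p(1-\lambda)\bigr)\to1$ as $\lambda\to1^-$, forcing both norms to equal $1$.

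The adjoint identity asserted in (iii) is a Fubini interchange. For $f\in L^{p'}(\R^+)$ and $a\in\ell^p$ with $1<p\le\infty$ I would write
\[
\sum_n a(n)\int_0^\infty f(t)e^{-t}\tfrac{t^n}{n!}\,dt=\int_0^\infty f(t)\Big(e^{-t}\sum_n a(n)\tfrac{t^n}{n!}\Big)\,dt,
\]
that is $\langle\P f,a\rangle=\langle f,\P^*a\rangle$ in the dual pairings. The restriction $1<p\le\infty$ is exactly what guarantees $(\ell^{p'})^*=\ell^p$ and $(L^{p'})^*=L^p$, so that $\P^*$ is identified with the genuine Banach-space adjoint of $\P:L^{p'}(\R^+)\to\ell^{p'}$; the absolute convergence needed for Fubini comes from the endpoint bounds already obtained.

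For the multiplicativity in (ii), after substituting $t=u+s$ in $\P(f\ast g)(n)=\int_0^\infty\!\int_0^t f(t-s)g(s)\,ds\,e^{-t}\tfrac{t^n}{n!}\,dt$, the crux is the binomial identity $e^{-(u+s)}(u+s)^n/n!=\sum_{j=0}^n\mathfrak{p}_{n-j}(u)\mathfrak{p}_j(s)$, which factors the double integral into $\sum_{j=0}^n\P f(n-j)\,\P g(j)=(\P f\ast\P g)(n)$. Property (iv) is the dual computation: expanding $(\P^*a\ast\P^*b)(t)=\int_0^t\P^*a(t-s)\P^*b(s)\,ds$ and using the Euler integral $\int_0^t(t-s)^i s^j\,ds=\tfrac{i!\,j!}{(i+j+1)!}\,t^{i+j+1}$, I would collect the terms with $i+j=m$ into $(a\ast b)(m)$ to obtain the series $\sum_{m\ge0}(a\ast b)(m)\,\mathfrak{p}_{m+1}(t)$, which is precisely $\P^*(\delta_1\ast(a\ast b))$ because convolving with $\delta_1$ shifts the index $m\mapsto m+1$.

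Finally, (v) and (vi) are direct evaluations of the composed kernels. For (v) I compute $\P(\P^*a)(m)=\sum_n\tfrac{a(n)}{n!\,m!}\int_0^\infty e^{-2t}t^{n+m}\,dt$ and insert $\int_0^\infty e^{-2t}t^{n+m}\,dt=(n+m)!/2^{n+m+1}$ together with $\binom{n+m}{n}=k^{m+1}(n)$; for (vi) I interchange sum and integral in $\P^*(\P f)(t)=e^{-t}\int_0^\infty f(s)e^{-s}\sum_n\tfrac{(ts)^n}{(n!)^2}\,ds$ and recognize the inner series as $I_0(2\sqrt{ts})$. The main obstacle I anticipate is not any single identity but the \emph{uniform} justification of the Fubini/Tonelli interchanges across the whole range $1\le p\le\infty$ (and the sharpness of the norm, where interpolation alone delivers only the bound $\le1$); I would handle this by verifying each interchange at the endpoints $p=1,\infty$ and propagating it by the continuity of both sides already established, or equivalently by first proving the algebraic identities on the dense subspace of compactly supported functions and extending by boundedness.
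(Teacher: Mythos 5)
Your proposal is correct and follows essentially the same route as the paper: endpoint bounds at $p=1,\infty$ combined with Riesz--Thorin interpolation for contractivity, the exponential test functions $e_\lambda$ for sharpness of the norm, the Fubini/binomial factorization for (ii), the Beta integral $\int_0^t(t-s)^i s^j\,ds$ for (iv), and the same direct kernel evaluations for (v) and (vi). The only minor variation is that you establish $\Vert \P^*\Vert=1$ for every $p$ by testing the geometric sequences $a_\lambda(n)=\lambda^n$, whereas the paper deduces it by adjointness from $\Vert\P\Vert=1$ for $1<p\le\infty$ and checks $p=1$ directly; both arguments are valid.
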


\begin{proof} (i) First consider $p=1$ and $p=\infty$. Then it is straightforward to check that  $\Vert \P\Vert \le 1$ and the map $\P: L^p(\mathbb{R}^+)\to \ell^p$ is a linear and bounded operator for $p\in \{1, \infty\}$. By the Riesz–Thorin interpolation theorem, we conclude that   $\Vert \P\Vert \le 1$ for any $1\le p \le \infty$. Note take $\lambda >0 $ and consider functions $e_\lambda(t)=e^{-\lambda t}$. By (\ref{exp}), we obtain that
$$
{\Vert \P(e_\lambda)\Vert_p^p\over \Vert e_\lambda\Vert_p^p}= {(1+\lambda)^p \lambda p\over (1+\lambda)^p -1}\to 1, \qquad \lambda\to 0,
$$
and we conclude that $\Vert \P\Vert = 1$  for any $1\le p \le \infty$.

\noindent (ii) Take $f\in L^p(\R^+)$ and $g\in L^1(\R^+)$ then $ f\ast g\in L^p(\R^+)$ for $1\le p\le \infty.$ For   $n\ge 0$, we get that
\begin{eqnarray*}
\P(f\ast g)(n)&=& \int_0^\infty \int_0^t f(t-s)g(s)ds {t^n\over n!}e^{-t}dt= \int_0^\infty g(s)\int_s^\infty f(t-s) {t^n\over n!}e^{-t}dtds\cr
&=&\int_0^\infty g(s)e^{-s}\int_0^\infty f(u) {(u+s)^n\over n!}e^{-u}duds\cr
&=&\sum_{j=0}^n \int_0^\infty g(s)e^{-s}{s^j\over j!}ds \int_0^\infty f(u)e^{-u}{u^{n-j}\over (n-j)!}du= (\P(g)\ast \P(f))(n).
\end{eqnarray*}

\noindent (iii) Take  $1\le p< \infty$, $f\in L^p(\R^+)$ and $b \in \ell^{p'}$ with ${1\over p}+{1\over p'}=1$. Then
$$
\langle \P(f), b\rangle= \sum_{n=0}^\infty \P(f)(n)b(n)= \int_0^\infty f(t)e^{t} \sum_{n=0}^\infty{t^n\over n!}b(n)dt= \langle f, \P^*(b)\rangle.
$$
We conclude that the operator  $\P^*: \ell^p\to L^p(\mathbb{R}^+)$ is a bounded operator and verifies that $\Vert \P^*\Vert =1$ for $1<p\le \infty$. The case $p=1$ is shown directly.

\noindent{(iv)} Take  $a\in \ell^p$ and $b\in \ell^1$. Then
\begin{eqnarray*}
(\P^*(a)\ast \P^\ast(b))(t)&=&\int_0^t \P^*(a)(t-s) \P^*(b)(s)ds= \sum_{n,j=0}^\infty {a(n)\over n!}{b(j)\over j!}e^{-t}\int_0^t(t-s)^{n}s^jds\cr
&=& e^{-t}\sum_{n,j=0}^\infty {a(n)}{b(j)}{t^{n+j+1}\over (n+j+1)!}= e^{-t}\sum_{l=0}^\infty {t^{l+1}\over (l+1)!}\sum_{n=0}^{l}{a(n)}{b(l-n)}\cr
&=& e^{-t}\sum_{l=0}^\infty {t^{l+1}\over (l+1)!}(a\ast b)(l)= \P^*(\delta_1\ast(a\ast b))(t)
\end{eqnarray*}
for $t\ge 0$.

\noindent{(v)} Take $ a=(a(n))_{n\ge 0}\in \ell^p$. Then
\begin{eqnarray*}
\P(\P^\ast (a))(m)&=& \int_0^\infty{t^m\over m!}e^{-2t}\sum_{n=0}^\infty {t^n\over n!}a(n) dt= {1\over m!}\sum_{n=0}^\infty {a(n)\over n!}\int_0^\infty{t^{m+n}}e^{-2t}dt\cr
&=& {1\over m!}\sum_{n=0}^\infty {a(n)\over n!}{(m+n)!\over 2^{n+m+1}}= {1\over 2^{m+1}}\sum_{n=0}^\infty {k^{m+1}(n)\over 2^{n}}a(n), \cr
\end{eqnarray*}
for $m\ge 0$.

\noindent{(vi)} Take  $ f\in L^p(\R^+)$. Then
\begin{eqnarray*}
\P^*(\P(f))(t)&=&e^{-t}\sum_{n=0}^\infty{t^n\over n!}\int_0^\infty f(s)e^{-s}{s^n\over n!}ds =e^{-t}\int_0^\infty f(s)e^{-s}\sum_{n=0}^\infty{(st)^n\over (n!)^2}ds\cr
&=&e^{-t}\int_0^\infty f(s)e^{-s}I_0(2\sqrt{ts})ds, \qquad t>0,
\end{eqnarray*}
 and we conclude the proof.
\end{proof}

In the next proposition, we give the connection between Poisson transformations $\P$ and $\P^*$ and the Laplace and Zeta transform $\L$ and ${\mathcal Z}$ presented in the Introduction.

\begin{proposition}\label{conne} Let $\P$ and $\P^*$ operators given in Definition \ref{Poiss}.
\begin{itemize}
\item[(i)] Take $f\in L^p(\R^+)$ for $1\le p\le \infty$. Then $$\Zeta(\P (f))(z)= \L f(1-z), \qquad \vert z\vert<1.$$  In particular the operator $\P: L^p(\mathbb{R}^+)\to \ell^p$ in injective for $1\le p\le \infty$.
\item[(ii)] Take $a=(a(n))_{n\ge 0} \in \ell^p$. Then
$$
{\mathcal L}(\P^*(a))(z)={1\over 1+z}\Zeta(a)\left({1\over 1+z}\right), \qquad \Re z>0.
$$
In particular the operator $\P^*:  \ell^p\to L^p(\mathbb{R}^+)$ in injective for $1\le p\le \infty$.
\item[(iii)] Both operators $\P$ and $\P^*$ are of dense range.

\end{itemize}

\end{proposition}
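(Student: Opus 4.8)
The plan is to establish (i) and (ii) by direct computation, each reducing to a single interchange of a sum with an integral, and then to read off the injectivity statements from the uniqueness theorems for the Laplace and $\Zeta$ transforms. Part (iii) will then follow formally from (i), (ii) and the Hahn--Banach duality between dense range and injectivity of the adjoint, using the adjoint relation already recorded in Theorem \ref{convoss}(iii).

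For (i), I would substitute the definitions to write $\Zeta(\P(f))(z)=\sum_{n\ge 0}z^n\int_0^\infty f(t)e^{-t}\frac{t^n}{n!}\,dt$ and then interchange summation and integration. This interchange is the one point needing care: applying Tonelli to $|f|$ produces the majorant $\sum_{n\ge 0}|z|^n\int_0^\infty|f(t)|e^{-t}\frac{t^n}{n!}\,dt=\int_0^\infty|f(t)|e^{-(1-|z|)t}\,dt$, which is finite for $|z|<1$ by H\"older since $1-|z|>0$, so the interchange is legitimate. Afterwards the inner sum is $\sum_{n\ge 0}\frac{(zt)^n}{n!}=e^{zt}$ and the integral collapses to $\int_0^\infty f(t)e^{-(1-z)t}\,dt=\L f(1-z)$. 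Injectivity is then immediate: if $\P(f)=0$ then $\L f(1-z)=0$ for all $|z|<1$, i.e. $\L f$ vanishes on the open disk $\{w:|1-w|<1\}\subset\CC^+$; since $\L f$ is holomorphic on the connected domain $\CC^+$, the identity theorem forces $\L f\equiv 0$, and uniqueness of the Laplace transform yields $f=0$ a.e.

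Part (ii) runs entirely in parallel. I would compute $\L(\P^*(a))(z)=\sum_{n\ge 0}\frac{a(n)}{n!}\int_0^\infty t^n e^{-(1+z)t}\,dt$, justifying the interchange by Tonelli via the majorant series $\sum_{n\ge 0}|a(n)|(1+\Re z)^{-(n+1)}$, which converges because $a\in\ell^p\subset\ell^\infty$ and $1+\Re z>1$. Evaluating the Gamma integral as $n!/(1+z)^{n+1}$ gives $\sum_{n\ge 0}a(n)(1+z)^{-(n+1)}=\frac{1}{1+z}\,\Zeta(a)\!\left(\frac{1}{1+z}\right)$, the series converging since $|(1+z)^{-1}|<1$ when $\Re z>0$. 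For injectivity, $\P^*(a)=0$ forces $\Zeta(a)$ to vanish on the image $\{(1+z)^{-1}:\Re z>0\}$, which a short computation identifies with the open disk $\{w:|w-\tfrac12|<\tfrac12\}\subset\DD$; as $\Zeta(a)$ is holomorphic on $\DD$, the identity theorem gives $\Zeta(a)\equiv 0$, hence $a(n)=0$ for every $n$.

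For (iii) I would invoke the standard criterion that a bounded operator has dense range if and only if its adjoint is injective. By Theorem \ref{convoss}(iii) the adjoint of $\P:L^p(\R^+)\to\ell^p$ is $\P^*:\ell^{p'}\to L^{p'}$, which is injective by (ii), so $\P$ has dense range. Symmetrically, the same pairing identifies the adjoint of $\P^*:\ell^p\to L^p(\R^+)$ with $\P:L^{p'}\to\ell^{p'}$, injective by (i), so $\P^*$ has dense range. The only mild subtlety is matching the formal adjoint of Theorem \ref{convoss}(iii) with the genuine Banach-space adjoint through the dualities $(\ell^p)^*=\ell^{p'}$ and $(L^p)^*=L^{p'}$, which is routine for $1\le p<\infty$. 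I expect no deep obstacle here; the real bookkeeping lies in the two Tonelli justifications and in keeping the two domains of holomorphy straight for the identity-theorem arguments in (i) and (ii).
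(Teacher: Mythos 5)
Your proof is correct and follows essentially the same route as the paper: both parts (i) and (ii) are the same sum--integral interchange computations, injectivity is deduced from the uniqueness of the Laplace and $\Zeta$ transforms (you merely make explicit the identity-theorem step the paper leaves implicit), and (iii) is the same duality argument the paper compresses into ``a straight consequence of the injectivity of $\P$ and $\P^*$.'' Your added Tonelli majorants and the identification of the image disks $\{w:\vert 1-w\vert<1\}$ and $\{w:\vert w-\tfrac12\vert<\tfrac12\}$ are sound bookkeeping that the paper omits.
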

\begin{proof} (i) Take   $f\in L^p(\mathbb{R}^+)$. Then we get that
$$
\Zeta(\P (f))(z)= \sum_{n=0}^\infty z^n\int_0^\infty  f(t){t^n\over n!}dt= \int_0^\infty f(t) e^{-(1-z)t}dt={\mathcal L}(f)(1-z),
$$
for $\vert z\vert<1$. In the case that $\P(f)=0$, we conclude that $f=0$ by the injectivity of the Laplace transform.

\noindent (ii) Take $a=(a(n))_{n\ge 0} \in \ell^p$. Then
\begin{eqnarray*}
{\mathcal L}(\P^*(a))(z)&=&\int_0^\infty e^{-(z+1)t} \sum_{n=0}^\infty {t^n\over n!}a(n)dt= \sum_{n=0}^\infty {a(n)\over n!}\int_0^\infty t^ne^{-(z+1)t}dt\cr
&=& \sum_{n=0}^\infty {a(n)\over (1+z)^{n+1}}= {1\over 1+z}\Zeta(a)\left({1\over 1+z}\right),
\end{eqnarray*}
 for  $\Re z>0$. In the case that $\P^*(a)=0$, we conclude that $a=0$ by the injectivity of the Zeta transform.

 The item (iii) is a straight consequence of the injectivity of $\P$ and $\P^*$.
\end{proof}

\begin{remark}{\rm Note that both operators ${\mathcal P}$ and ${\mathcal P}^*$ are not invertible for $1\le p\le \infty$. Moreover the range of ${\mathcal P}^*$ is contained in the set of analytic functions. Now, suppose that  ${\mathcal P}$ was invertible, there will exist $f\in L^p(\R^+)$ such that $\mathcal P(f)=\delta_0$. By Proposition \ref{conne}(i)
$$
1= \L f(1-z), \qquad z \in D(0,1),
$$
and this follows  a contradiction with $f\in L^p(\R^+)$.
}
\end{remark}

\section{Some $C_0$-semigroups on  $\ell^p$ and $L^p(\R^+)$}
\label{sec:discrete}

In this section we consider two different families of $C_0$-semigroups on $\ell^p$: convolution semigroups ( $(e^{z\Delta})_{z\in \CC}$ and $(e^{z\nabla})_{z\in \CC}$) and Koopman semigroups ($(\frak{T}_p(t))_{t>0}$ and
$(\frak{S}_p(t))_{t>0}$). We also present nice connections with $C_0$-semigroups on $L^p(\RR^+)$ in Theorem \ref{eliz} and \ref{intert2}.

\subsection{Convolution semigroups on  $\ell^p$}\label{savi}

The forward difference operator $\Delta(a)(n):= ((\delta_{-1}-\delta_0)\ast a)(n)=a(n+1)-a(n)$ is a bounded operator on $\ell^p$, and $\Vert \Delta\Vert=2$. In fact,  the symbol $(\delta_{-1}-\delta_0)$ generates a entire semigroup of elements on $\ell^p(\ZZ)$ where
$$
e^{z(\delta_{-1}-\delta_0)}(n)=e^{-z}{z^{-n}\over (-n)!}\chi_{-\NN_0}(n), \qquad n\in \ZZ, \quad z\in \CC,
$$
and then also defined a entire $C_0$-semigroup of operators on $\ell^p,$
$(e^{z\Delta})_{z\in \CC}\subset {\mathcal B}(\ell^p),$ where
$$
(e^{z\Delta }a)(n)= e^{-z}\sum_{j\ge 0}^\infty a(j+n){z^j\over j!}, \qquad a\in \ell^p ,
$$
for $1\le p \le \infty$. Its infinitesimal generator is $\Delta$ and $\Vert e^{t\Delta }\Vert = 1$ for $t\ge 0$, see similar results on $\ell^p(\ZZ)$ in \cite[Theorem 3.2]{GLM}. Note that
$$
(e^{t\Delta }a)(n)=\P^*(\delta_{-n}\ast a)(t), \qquad t>0, \qquad a\in \ell^p.
$$

The classical backward difference operator $\nabla(a)(n):= ((\delta_{1}-\delta_0)\ast a)(n)=a(n-1)-a(n)$ for $n\ge 1$ and $\nabla(a)(0):=-a(0)$ is also a bounded operator on $\ell^p$, and $\Vert \nabla\Vert=2$. In this case  the element $(\delta_{1}-\delta_0)$ generates an entire semigroup on $\ell^p$ where
$$
e^{z(\delta_{1}-\delta_0)}(n)=e^{-z}{z^{n}\over n!}, \qquad n\ge 0, \quad z\in \CC.
$$
The operator $\nabla$ is the infinitesimal generator of  the entire $C_0$-semigroup on $\ell^p,$
$(e^{z\nabla})_{z\in \CC}\subset {\mathcal B}(\ell^p),$ where
$$
(e^{z\nabla}a)(n)= (e^{z(\delta_{1}-\delta_0)}\ast a)(n)  =  e^{-z}\sum_{j\ge 0}^n a(j){z^{n-j}\over (n-j)!}, \qquad a\in \ell^p ,
$$
for $1\le p \le \infty$ and  $\Vert e^{t\nabla}\Vert = 1$ for $t\ge 0$, see  \cite[Theorem 3.3]{GLM}. It is straightforward to check that $\nabla$ is the adjoint operator of $\Delta$, i.e,  $(\Delta)^*=\nabla $ and $(e^{z\Delta})^*=e^{z\nabla}$ on $\ell^{p'}$ where ${1\over p}+{1\over p'}=1 $ for $1\leq p< \infty$.

\begin{remark} {\rm For $1\le p\le \infty$, note that
$$
\sigma_{{\mathcal B}(\ell^p)}(\Delta)=\sigma_{{\mathcal B}(\ell^p)}(\nabla)=\{z\in \CC\, :\, \vert z-1\vert \le 1\},
$$
as a nice consequence of Wiener’s lemma, see for example \cite{Gro}.}
\end{remark}

The usual left translation $C_0$-semigroup $(T_{left}(t))_{t>0}$ on $L^p(\R^+)$ defined by
$$
T_{left}(t)f(s):=f(s+t), \quad s,t>0, \qquad f\in L^p(\R^+).
$$
In fact, it is a $C_0$-semigroups of contractions and the infinitesimal generator is the usual derivation $({d\over ds}, D({d\over ds}))$,
$$
{d\over ds}f(s):=f'(s), \qquad f\in D({d\over ds})=\{f \in L^p(\R^+)\,\, :\, \, f' \in L^p(\R^+)\},
$$
 for $1\le p<\infty$, see \cite[Section I.4.16]{En-Na-00}. The right translation $C_0$-semigroup $(T_{right}(t))_{t>0}$ on $L^p(\R^+)$,  $1\le p<\infty$, defined by
$$
T_{right}(t)f(s):= \begin{cases}  f(s-t), \quad &s\ge t,\\0,\quad &0\le s<t.\end{cases}
$$
Also it is a $C_0$-semigroups of contractions and the infinitesimal generator is the  derivation $({d^0\over ds}, D({d^0\over ds}))$,
$$
{d^0\over ds}f(s):=-f'(s), \qquad f\in D({d^0\over ds})=\{f \in L^p(\R^+)\,\, : \, \, f' \in L^p(\R^+), \hbox{ and }f(0)=0\}.
$$
For $1 < p < \infty$, it is straightforward to prove that  the semigroups $(T_{left}(t))_{t>0}$ and $(T_{right}(t))_{t>0}$ are adjoint
to each other, i.e., $(T_{left}(t))^*$
coincides with $T_{right}(t)$ on $L^{p'}(\R^+)$ where $1/p + 1/p'= 1$ and $t>0$.

In the following theorem, we show that $C_0$-semigroups $(T_{left}(t))_{t>0}$, $(T_{right}(t))_{t>0}$,  $(e^{t\Delta})_{t\geq 0}$ and $(e^{z\nabla})_{t\geq 0}$ intertwine with $\P$ and $\P^*$. In fact, this property holds for its infinitesimal generators.

\begin{theorem}\label{eliz} Let $(T_{left}(t))_{t>0}$ and $(T_{right}(t))_{t>0}$ be the translations $C_0$ semigroups defined on $L^p(\R^+)$ and the $C_0$-semigroups $(e^{t\Delta})_{t\geq 0}$ and $(e^{z\nabla})_{t\geq 0}$ on $\ell^p$ for $1\le p\le \infty$. The following equalities hold.
\begin{itemize}
\item[(i)] $\nabla(\P(f))=-\P(f')$ for $f\in D({d^0\over ds})$.
\item[(ii)] $ e^{t\nabla}\circ \P= \P\circ T_{right}(t)$ for  $t\ge 0$.
\item[(iii)] $\P^*(\Delta (a))= (\P^*(a))'$ for $a\in \ell^p$.
\item[(iv)]$\P^*(e^{t\Delta})=\P^*(T_{left}(t))$ for  $t\ge 0$.
\end{itemize}
\end{theorem}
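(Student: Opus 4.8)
The plan is to prove the four identities by direct computation, exploiting the defining integral/series formulas for $\P$ and $\P^*$ together with integration by parts, and noting that (iii) and (iv) are the adjoint (dual) statements of (i) and (ii). Because $\nabla = \Delta^*$ and $e^{t\nabla}=(e^{t\Delta})^*$, and because $(T_{right}(t))^* = T_{left}(t)$ with ${d^0\over ds}$ adjoint to ${d\over ds}$, I expect that once (i) and (ii) are established, items (iii) and (iv) follow by taking adjoints in the duality pairing $\langle \P(f),b\rangle = \langle f,\P^*(b)\rangle$ of Theorem \ref{convoss}(iii); alternatively each can be checked head-on, which I sketch below.

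For (i), I would compute $\nabla(\P(f))(n) = \P(f)(n-1)-\P(f)(n)$ for $n\ge 1$ directly from \eqref{poi}. Writing
\begin{equation*}
\P(f)(n-1)-\P(f)(n)=\int_0^\infty f(t)e^{-t}\left({t^{n-1}\over (n-1)!}-{t^n\over n!}\right)dt,
\end{equation*}
I recognize the integrand's kernel as proportional to the derivative of $e^{-t}t^n/n!$: indeed ${d\over dt}\bigl(e^{-t}{t^n\over n!}\bigr)=e^{-t}\bigl({t^{n-1}\over(n-1)!}-{t^n\over n!}\bigr)$. Hence the right-hand side is $\int_0^\infty f(t)\,{d\over dt}\bigl(e^{-t}{t^n\over n!}\bigr)dt$, and integrating by parts (the boundary terms vanish since $f(0)=0$ for $f\in D({d^0\over ds})$ and the Poisson kernel decays at infinity) gives $-\int_0^\infty f'(t)e^{-t}{t^n\over n!}dt = -\P(f')(n)$. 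The $n=0$ case, where $\nabla(\P(f))(0)=-\P(f)(0)$, must be checked separately and matches $-\P(f')(0)$ using $f(0)=0$ again.

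For (ii), I would start from the explicit formula $(e^{t\nabla}a)(n)=e^{-t}\sum_{j=0}^n a(j){t^{n-j}\over(n-j)!}$, substitute $a=\P(f)$, and reorganize the resulting double integral/sum; alternatively, the cleanest route is to verify the semigroup intertwining at the level of generators via (i) and then invoke uniqueness: the identity $\nabla\circ\P=\P\circ(-{d\over ds})=\P\circ{d^0\over ds}$ from (i) (reading ${d^0\over ds}f=-f'$) propagates to the semigroups because $\P$ is bounded, so $e^{t\nabla}\circ\P$ and $\P\circ T_{right}(t)$ both solve the same abstract Cauchy problem with the same initial data. Items (iii) and (iv) are handled symmetrically: for (iii) I apply ${d\over ds}$ to the series \eqref{poid}, obtaining $(\P^*(a))'(t)=e^{-t}\sum_n a(n)\bigl({t^{n-1}\over(n-1)!}-{t^n\over n!}\bigr)$, and after reindexing the first sum by $n\mapsto n+1$ this equals $e^{-t}\sum_n\bigl(a(n+1)-a(n)\bigr){t^n\over n!}=\P^*(\Delta a)(t)$; then (iv) follows from (iii) by the generator-to-semigroup uniqueness argument just as (ii) followed from (i).

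The main obstacle I anticipate is justifying the boundary terms and the differentiation-under-the-sum in the integration-by-parts and term-by-term steps: one must confirm that for $f\in D({d^0\over ds})$ the product $f(t)e^{-t}t^n/n!$ genuinely vanishes at both endpoints (the condition $f(0)=0$ is exactly what kills the lower boundary term and is why the hypothesis is $f\in D({d^0\over ds})$ rather than $D({d\over ds})$), and that the power series defining $\P^*(a)$ may be differentiated termwise on $\R^+$ for $a\in\ell^p$. The latter is fine since the series has infinite radius of convergence in the sense controlled by the $t^n/n!$ coefficients, so uniform convergence on compacta legitimizes the interchange. Once these analytic justifications are in place, the algebra is the routine reindexing shown above.
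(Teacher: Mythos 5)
Your items (i) and (iii) coincide with the paper's own argument: for (i) the paper integrates by parts against $({t^n\over n!}e^{-t})'=e^{-t}\bigl({t^{n-1}\over (n-1)!}-{t^n\over n!}\bigr)$ and uses $f(0)=0$ exactly as you do (including the separate $n=0$ check), and for (iii) it differentiates the series termwise. The genuine divergence is in (ii) and (iv). The paper proves (ii) by a short direct computation: substitute $a=\P(f)$ into $e^{t\nabla}a(n)=e^{-t}\sum_{j=0}^n a(j){t^{n-j}\over (n-j)!}$, interchange sum and integral, and use the binomial identity $\sum_{j=0}^n{s^jt^{n-j}\over j!(n-j)!}={(s+t)^n\over n!}$ to recognize $\P(T_{right}(t)f)(n)$; items (iii), (iv) are then declared ``similar'' (or by duality). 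You instead propagate the generator identity to the semigroups via uniqueness for the abstract Cauchy problem. For $1\le p<\infty$ this is sound: in (ii) both candidates solve $w'=\nabla w$, $w(0)=\P f$, with $\nabla$ bounded, so Gronwall gives equality on $D({d^0\over ds})$ and density of that domain finishes; in (iv) one needs the extra (but immediate from (iii)) observation that the range of $\P^*$ lies in $D({d\over ds})$, so $t\mapsto \P^*(e^{t\Delta}a)$ is a classical solution of $u'={d\over ds}u$, and uniqueness of classical solutions for the generator of a $C_0$-semigroup applies. What your route buys is that no kernel manipulation is needed; what the paper's route buys is uniformity in $p$.

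That uniformity is where your argument has a concrete gap: the theorem is stated for $1\le p\le\infty$, and at $p=\infty$ your mechanism breaks down. On $L^\infty(\R^+)$ the translation semigroups are not strongly continuous and $D({d^0\over ds})$ is not dense (its elements are Lipschitz functions, whose uniform closure misses most of $L^\infty$), so the density step in (ii) fails, and the ACP-uniqueness framework invoked in (iv) presupposes the $C_0$-generator setting that is absent there. The paper's computation uses nothing beyond Fubini and the binomial theorem, hence holds for all $1\le p\le\infty$ without any appeal to strong continuity. Since you already mention the direct reorganization of the double sum/integral as an alternative for (ii), the repair is simply to carry it out (and its analogue for (iv)), or else to restrict the abstract argument to $p<\infty$ and treat the endpoint separately.
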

\begin{proof} (i) Take $f\in D({d^0\over ds})$. As $f(0)=0$, $-\P(f')(0)= \int_0^\infty e^{-t}f(t)dt= \P(f)(0)= \nabla(\P(f))(0).$ For $n\ge 1$,
$$
-\P(f')(n)= \int_0^\infty ({t^n\over n!}e^{-t})'f(t)dt= \P(f)(n-1)-\P(f)(n)= \nabla(\P(f))(n).
$$
\noindent (ii). Take $t>0$ and $f\in L^p(\R)$. For $n\ge 0$, we obtain
\begin{eqnarray*}
e^{t\nabla}( \P(f))(n)&=&e^{-t}\sum_{j=0}^n \int_0^\infty{s^j\over j!}e^{-s}f(s)ds{t^{n-j}\over (n-j)!}=  \int_0^\infty f(s)e^{-(t+s)}\sum_{j=0}^n{s^j\over j!}{t^{n-j}\over (n-j)!}ds\cr
&=& \int_0^\infty f(s)e^{-(t+s)}{(s+t)^n\over n!}ds= \P( T_{right}(t)f)(n).
\end{eqnarray*}
Items (iii) and (iv) are shown in a similar way. Alternatively we may use also duality to shown them.
\end{proof}

\subsection{ Some Koopman semigroups on  $\ell^p$}\label{savi2}
Now we consider the one-parameter families of operators $(\frak{T}(t))_{t\geq 0}$ and $(\frak{S}(t))_{t\geq 0}$ acting on  $\ell^p$ with $1\le p\le \infty$, where
\begin{eqnarray}\label{semis}
\frak{T}_p(t)a(n)&:=&e^{-{t\over p}}\displaystyle\sum_{j=0}^n{n \choose j}e^{-tj}(1-e^{-t})^{n-j}a(j),\cr
\frak{S}_p(t)a(n)&:=&e^{-t(1-{1\over p})}e^{-tn}\displaystyle\sum_{j=n}^{\infty}{j\choose n}(1-e^{-t})^{j-n}a(j),
\end{eqnarray}
for $n\in \N_0$ and $a\in \ell^p$. Respectively its infinitesimal generators are given by
\begin{eqnarray*}
 \frak{A}_p(a)(n)&:=&n (a(n-1)-a(n))-{1\over p}a(n)= n\nabla(a)(n)-{1\over p}a(n), \, n\ge 1,\cr\frak{A}_p(a)(0)&:=&- {1\over p}a(0); \cr
\frak{B}_p(a)(n)&:=&(n+1)a(n+1)-na(n)-(1-{1\over p})a(n)= (n+1)\Delta(a)(n)+{1\over p}a(n), \, n\ge 0,
\end{eqnarray*}
for $a\in D(\frak{A}_p)=D(\frak{B}_p)=\{b\in\ell^p\,\, ; (n+1)\Delta (b)\in \ell^p\}$. In fact, both families  $(\frak{T}_p(t))_{t\geq 0}$ and $(\frak{S}_p(t))_{t\geq 0}$ are dual contractions $C_0$-semigroups in $\ell^p$, $\Vert \frak{T}_p(t)\Vert, \Vert \frak{S}_p(t)\Vert\le 1 $ for $1\le p\le \infty$. See more details in \cite[Theorem 6.1, Proposition 6.3]{AM2018}. Moreover the operator $\frak{B}_{p'}$ is the adjoint operator of $\frak{A}_p$, i.e,  $(\frak{A}_p)^*=\frak{B}_{p'}$  on $\ell^{p'}$ where ${1\over p}+{1\over p'}=1 $ for $1\leq p < \infty$.

For $1\le p<\infty$, $\sigma_{point}(\frak{A}_p) =\emptyset$;  $\sigma_{point}(\frak{B}_p)=\CC_{-}$ for $1<p<\infty$ and  $\sigma_{point}(\frak{B}_p)=\CC_{-}\cup\{0\}$. Note that $\sigma(\frak{A}_p)= \sigma(\frak{B}_p) =\overline{\CC_-} $ for $1\le p<\infty$, see \cite[Proposition 6.4]{AM2018}. In the next theorem, we obtain the expression of $(\lambda-\frak{A}_p)^{-1}$ and $(\lambda-\frak{B}_p)^{-1}$ for $\Re \lambda>0$.
\begin{proposition} Take $\lambda \in \CC$ such that $\Re \lambda>0$ and $a=(a(n))_{n\ge 0}\in \ell^p$ for $1\le p<\infty$.
\begin{itemize}
\item[(i)] Then $\lambda \in \rho(\frak{A}_p)$ and
$$( \lambda-\frak{A}_p)^{-1}a(n)= {n!\over \Gamma(n+\lambda+{1\over p}+1)}\sum_{j=0}^n{\Gamma(\lambda+{1\over p}+j)\over j!}a(j), \qquad n\ge 0.
$$
\item[(ii)] Then $\lambda \in \rho(\frak{B}_p)$ and
$$
(\lambda- {\frak B}_{p})^{-1}a(n)= {\Gamma(n+\lambda+1-{1\over p})\over n!}\sum_{j=n}^\infty{j!\over \Gamma(\lambda+2-{1\over p}+j)}a(j),\qquad n\ge 0.
$$
\end{itemize}
\end{proposition}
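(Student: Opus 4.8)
The plan is to realize each resolvent as the Laplace transform of the corresponding contraction semigroup and then evaluate the resulting scalar integrals by means of the Beta identity recorded in the Notation. Since $(\frak{T}_p(t))_{t\ge 0}$ and $(\frak{S}_p(t))_{t\ge 0}$ are $C_0$-semigroups of contractions on $\ell^p$ generated by $\frak{A}_p$ and $\frak{B}_p$, their growth bounds do not exceed $0$; as already recorded $\sigma(\frak{A}_p)=\sigma(\frak{B}_p)=\overline{\CC_-}$, so every $\lambda$ with $\Re\lambda>0$ belongs to $\rho(\frak{A}_p)\cap\rho(\frak{B}_p)$ and
\[
(\lambda-\frak{A}_p)^{-1}a=\int_0^\infty e^{-\lambda t}\,\frak{T}_p(t)a\,dt,\qquad
(\lambda-\frak{B}_p)^{-1}a=\int_0^\infty e^{-\lambda t}\,\frak{S}_p(t)a\,dt,
\]
as $\ell^p$-valued integrals (see \cite{En-Na-00}). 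As the coordinate evaluation $a\mapsto a(n)$ is a bounded functional on $\ell^p$, I may move it inside each integral and argue coordinatewise.

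For (i) I would insert the explicit expression of $\frak{T}_p(t)a(n)$ from \eqref{semis}. For fixed $n$ the inner sum over $j$ is \emph{finite}, so exchanging it with the integral is immediate, and each resulting integral is computed by the identity $\B(u,v)=\int_0^\infty(1-e^{-s})^{u-1}e^{-sv}\,ds=\Gamma(u)\Gamma(v)/\Gamma(u+v)$:
\[
\int_0^\infty e^{-(\lambda+\frac1p+j)t}(1-e^{-t})^{n-j}\,dt=\B\!\left(n-j+1,\lambda+\tfrac1p+j\right)=\frac{(n-j)!\,\Gamma(\lambda+\frac1p+j)}{\Gamma(n+\lambda+\frac1p+1)},
\]
which is legitimate since $n-j+1\ge 1$ and $\Re(\lambda+\frac1p+j)>0$. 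The combinatorial identity $\binom{n}{j}(n-j)!=n!/j!$ then collapses the expression to the closed form asserted in (i).

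For (ii) the identical computation applied to $\frak{S}_p(t)a(n)$ produces the term-by-term integrals $\B(j-n+1,\lambda+1-\frac1p+n)$ and, after using $\binom{j}{n}(j-n)!=j!/n!$, precisely the series in (ii). The only genuine obstacle, and the step I would carry out with care, is that the sum over $j$ now runs to infinity, so the interchange of summation and integration must be justified by Tonelli's theorem. Replacing $\lambda$ by $\Re\lambda$ and $a$ by $|a|$, this reduces to the finiteness of $\sum_{j\ge n}\frac{j!}{\Gamma(\Re\lambda+2-\frac1p+j)}|a(j)|$; by Stirling the coefficient decays like $j^{-(\Re\lambda+1-\frac1p)}$, and since $\Re\lambda+1-\frac1p>\frac1{p'}$ exactly when $\Re\lambda>0$, this sequence lies in $\ell^{p'}$, whence Hölder's inequality against $a\in\ell^p$ gives absolute convergence. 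This validates the interchange and yields the formula in (ii).

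As an alternative that sidesteps the integral altogether, one can observe that $b=(\lambda-\frak{A}_p)^{-1}a$ is forced by $(\lambda-\frak{A}_p)b=a$ to satisfy $b(0)=a(0)/(\lambda+\frac1p)$ together with the recurrence $(\lambda+n+\frac1p)\,b(n)=a(n)+n\,b(n-1)$ for $n\ge 1$; a one-line induction on $n$ then verifies that the closed form in (i) solves it, with the analogous backward recurrence handling (ii). I would present the Laplace-transform computation as the main argument and keep the recurrence as an independent check.
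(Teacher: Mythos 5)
Your proposal is correct and follows essentially the same route as the paper: the paper also writes $(\lambda-\frak{A}_p)^{-1}$ and $(\lambda-\frak{B}_p)^{-1}$ as Laplace transforms of $\frak{T}_p(t)$ and $\frak{S}_p(t)$, exchanges the (finite, respectively infinite) sum with the integral, and evaluates each term by the identity $\int_0^\infty(1-e^{-t})^{u-1}e^{-vt}\,dt=\B(u,v)=\Gamma(u)\Gamma(v)/\Gamma(u+v)$. Your Tonelli/H\"older justification of the interchange in (ii) and the recurrence-based verification are extras the paper leaves implicit, but they do not change the argument.
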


\begin{proof} (i) Take $\lambda \in \CC$ such that $\Re \lambda>0$. Then
\begin{eqnarray*}
 (\lambda-\frak{A}_p)^{-1}a(n)&=&\int_0^\infty e^{-\lambda t} \frak{T}_p(t)a(n)dt= \displaystyle\sum_{j=0}^n{n \choose j}a(j)\int_0^\infty e^{-(\lambda +{1\over p}+j)t}(1-e^{-t})^{n-j}dt\cr
&=&{n!\over \Gamma(n+\lambda+{1\over p}+1)}\sum_{j=0}^n{\Gamma(\lambda+{1\over p}+j)\over j!}a(j),
\end{eqnarray*}
for $n\ge 0.$  (ii) Similarly, we also obtain that
\begin{eqnarray*}
 (\lambda-\frak{B}_p)^{-1}a(n)&=&\int_0^\infty e^{-\lambda t} \frak{S}_p(t)a(n)dt= \displaystyle\sum_{j=n}^{\infty}{j\choose n}a(j)\int_0^\infty e^{-t(\lambda+n +1-{1\over p})}(1-e^{-t})^{j-n}dt\cr
 &=&{\Gamma(n+\lambda+1-{1\over p})\over n!}\sum_{j=n}^\infty{j!\over \Gamma(\lambda+2-{1\over p}+j)}a(j),
\end{eqnarray*}
for $n\ge 0$ and we conclude the proof.
\end{proof}

\begin{remark}\label{notcommute} {\rm Note that operator $\frak{A}_p$ and $\Delta $ do not commute. In fact, for $n\ge 1$, we have that
\begin{eqnarray*}
\frak{A}_p(\Delta (a))(n)&=& n (2a(n)-a(n+1)-a(n-1))-{1\over p}(\Delta (a))(n),\cr
\Delta(\frak{A}_p (a))(n)&=& (2n+1)a(n)-(n+1)a(n+1)-na(n-1)-{1\over p}(\Delta (a))(n).\cr
\end{eqnarray*}
Similarly, we may check that   $\frak{B}_p$ and $\nabla $ do not commute.}
\end{remark}

On $L^p(\R^+)$, we consider the $C_0$-group of isometries, $(T_p(t))_{t\ge 0}$, defined by
\begin{equation}\label{eq:semigroups}
T_p(t)f(r):= e^{-{t\over p}}f(e^{-t}r), \qquad r\ge 0, \quad t\in \R.
\end{equation}
Its infinitesimal generator  $(\Lambda_p, D(\Lambda_p))$ is given by
$$
\Lambda_p (f)(r)= -rf'(r)-{1\over p}f(r), \qquad r\ge 0,
$$
and $D(\Lambda_p)=\{f\in L^p(\R^+)\,\, \vert \,\, rf'(r)\in L^p(\R^+)\}$, see \cite[Theorem 2.5]{LMPS}. We denote by $(T^+_p(t))_{t>0}$ and $(T^-_p(t))_{t>0}$ the $C_0$-semigroups defined by $T^+_p(t):= T_p(t)$ and $T^-_p(t):= T_p(-t)$ for $t\ge 0$;   $\Lambda_p$ and $-\Lambda_p$ are its infinitesimal generator, respectively, and they are adjoint (or dual) operators on suitable spaces (\cite[Proposition 2.7]{LMPS}).

In the following theorem, we show that $C_0$-semigroups $(T^+_p(t))_{t>0}$, $(T^-_p(t))_{t>0}$,  $(\frak{T}_p(t))_{t\geq 0}$ and $(\frak{S}_p(t))_{t\geq 0}$ intertwine with $\P$ and $\P^*$. The proof is similar to \cite[Theorem 6.3]{AMM} and we include here to avoid the lack of completeness.

\begin{theorem}\label{intert2} Take  the one-parameter families  $(T_p(t))_{t\in \R}$  defined on $L^p(\R^+)$ in (\ref{eq:semigroups}) and $(\frak{T}_p(t))_{t\geq 0}$ and $(\frak{S}_p(t))_{t\geq 0}$ defined on $\ell^p$ in (\ref{semis}) for $1\le p\le \infty$. Then
\begin{itemize}
\item[(i)] $\frak{B}_p(\P(f))= \P(-\Lambda_p(f))$ for $f\in D(\Lambda_p)$.
    \item[(ii)] $\frak{S}_p(t)\circ \P= \P\circ T^-_p(t)$ for $t>0$.
    \item[(iii)] $\P^*(\frak{A}_p(a))= \Lambda_p(\P^*(a))$ for $a\in D(\frak{A}_p)$.
     \item[(iv)] $ \P^*\circ\frak{T}_p(t)=  T^+_p(t)\circ \P^*$ for $t>0$.
    \end{itemize}

\end{theorem}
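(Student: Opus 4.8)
The plan is to treat the four identities as two matched pairs: (i)--(ii) concern $\P$, the generator $\frak{B}_p$ and the semigroup $\frak{S}_p(t)$, while (iii)--(iv) concern $\P^*$, $\frak{A}_p$ and $\frak{T}_p(t)$. In each pair the first identity is a statement at the level of the infinitesimal generators and the second is its semigroup counterpart. Recalling from the preceding discussion that $\frak{B}_p$ generates $\frak{S}_p(t)$, $\frak{A}_p$ generates $\frak{T}_p(t)$, $-\Lambda_p$ generates $T^-_p(t)$ and $\Lambda_p$ generates $T^+_p(t)$, items (ii) and (iv) are exactly the semigroup liftings of (i) and (iii). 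I would therefore prove (i) and (iii) by direct computation and then obtain (ii) and (iv) either by the standard intertwining principle (a bounded operator intertwining two generators on a core intertwines the two $C_0$-semigroups) or, as I prefer here, by the same explicit kernel computation, which is immediate and sidesteps any core/density bookkeeping.

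For (i) I would start from $\P(-\Lambda_p f)(n)=\int_0^\infty\bigl(rf'(r)+\tfrac1p f(r)\bigr)e^{-r}\tfrac{r^n}{n!}\,dr$, absorb the factor $r$ into the kernel as $r\cdot\tfrac{r^n}{n!}=\tfrac{r^{n+1}}{n!}$, and integrate by parts in the $f'$-term. Differentiating $e^{-r}\tfrac{r^{n+1}}{n!}$ produces the two kernels $(n+1)e^{-r}\tfrac{r^n}{n!}$ and $-(n+1)e^{-r}\tfrac{r^{n+1}}{(n+1)!}$, so the integral collapses to $(n+1)\P(f)(n+1)-(n+1)\P(f)(n)$; adding the $\tfrac1p$-term and rearranging yields exactly $\frak{B}_p(\P f)(n)=(n+1)\P(f)(n+1)-n\P(f)(n)-(1-\tfrac1p)\P(f)(n)$. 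For (iii) I would differentiate the entire series $\P^*(a)(t)=e^{-t}\sum_n a(n)\tfrac{t^n}{n!}$ termwise, assemble $-t\,\tfrac{d}{dt}\P^*(a)(t)-\tfrac1p\P^*(a)(t)$, and reindex the two resulting power series so that both carry the weight $\tfrac{t^m}{m!}$; reading off the coefficient gives $m(a(m-1)-a(m))-\tfrac1p a(m)=\frak{A}_p(a)(m)$, i.e.\ $\Lambda_p(\P^* a)=\P^*(\frak{A}_p a)$.

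For (ii) and (iv) I would carry out the matching binomial-to-exponential computations. In (ii), substituting $\P(f)(j)=\int_0^\infty f(s)e^{-s}\tfrac{s^j}{j!}\,ds$ into the defining sum of $\frak{S}_p(t)$ and using $\binom{j}{n}\tfrac1{j!}=\tfrac1{n!\,(j-n)!}$, the inner series over $j\ge n$ sums to $\tfrac{s^n}{n!}e^{(1-e^{-t})s}$, leaving $\frak{S}_p(t)\P(f)(n)=e^{-t(1-1/p)}e^{-tn}\int_0^\infty f(s)e^{-e^{-t}s}\tfrac{s^n}{n!}\,ds$; the change of variable $u=e^{t}r$ in $\P\bigl(T^-_p(t)f\bigr)(n)=\int_0^\infty e^{t/p}f(e^{t}r)e^{-r}\tfrac{r^n}{n!}\,dr$ produces the identical expression. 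Symmetrically, in (iv) I would insert the definition of $\frak{T}_p(t)$ into $\P^*$, use $\binom{n}{j}\tfrac1{n!}=\tfrac1{j!\,(n-j)!}$, interchange the order of summation and sum the free index to $r^je^{r(1-e^{-t})}$; after the simplification $e^{-r}e^{r(1-e^{-t})}=e^{-e^{-t}r}$ the result is $e^{-t/p}\P^*(a)(e^{-t}r)=T^+_p(t)\P^*(a)(r)$.

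None of these computations is deep; the points that actually require care are the justifications of the interchanges of sum and integral (by Fubini/Tonelli, since all kernels are nonnegative) and of the termwise differentiation of the power series defining $\P^*$ (legitimate inside its disk of convergence, which is all of $\R^+$ as the series is entire), together with the vanishing of the boundary terms in the integration by parts of (i). The latter is the one place where the hypothesis $f\in D(\Lambda_p)$, i.e.\ $rf'\in L^p(\R^+)$, is genuinely used: it controls the behaviour of $f(r)e^{-r}r^{n+1}$ at $r=0$ and $r=\infty$ so that both endpoint contributions drop out. I expect this boundary estimate, rather than any of the algebra, to be the only real obstacle.
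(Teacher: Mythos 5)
Your proposal is correct and follows essentially the same route as the paper: all four identities are established by the same direct kernel/series computations (integration by parts against $e^{-r}r^{n+1}/n!$ for (i), termwise differentiation and reindexing for (iii), and the binomial-to-exponential resummations plus change of variable for (ii) and (iv)). Your added remarks on Fubini/Tonelli and the vanishing boundary terms are careful points the paper leaves implicit, but they do not change the argument.
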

\begin{proof} (i) Take $f\in D(\Lambda_p)$ and $n\ge 0$. We integrate by parts to obtain
\begin{eqnarray*}
-\P(\Lambda_p(f))(n)&=& \int_0^\infty{t^{n+1}\over n!}e^{-t}f'(t)dt+{1\over p}\P(f)(n)\cr
&=&(n+1)\left(\P(f)(n+1)-\P(f)(n)\right)+{1\over p}\P(f)(n)\cr
&=&\left((n+1)\Delta+{1\over p}\right)\P(f)(n)=\frak{B}_p(\P(f))(n).
\end{eqnarray*}

\noindent(ii) Take $t>0$ and $f\in L^p(\R^+)$ with $1\le p\le\infty$. For $n\in \N_0$, we obtain that
\begin{eqnarray*}
(\frak{S}_p(t)\circ \P)(f)(n)&=&e^{-t(1-{1\over p})}e^{-tn}\displaystyle\sum_{j=n}^{\infty}{j\choose n}(1-e^{-t})^{j-n} \int_0^\infty{s^j\over j!}e^{-s}f(s)\,ds\cr
&=&e^{-t(1-{1\over p})}e^{-tn}\int_0^\infty e^{-s}f(s)\displaystyle\sum_{j=n}^{\infty}{1\over n!(j-n)!}(1-e^{-t})^{j-n}s^j\,ds\cr
&=&e^{-t(1-{1\over p})}e^{-tn}\int_0^\infty e^{-s}f(s){s^n\over n!}\displaystyle\sum_{m=0}^{\infty}{(s(1-e^{-t}))^{m}\over m!}\,ds\cr
&=&e^{-t(1-{1\over p})}e^{-tn}\int_0^\infty e^{-s}f(s){s^n\over n!}e^{s(1-e^{-t})}\,ds \cr
&=&e^{t\over p}\int_0^{e^{-t}}f(e^tu){u^n\over n!}e^{-u}\,du=  (\P\circ T^-_p(t))(f)(n).
\end{eqnarray*}

\noindent (iii) Take $a\in D(\frak{A}_p)$ and $t>0$. Then
\begin{eqnarray*}
\Lambda_p(\P^*(a))(t)&=& -t\left(\sum_{n=0}^\infty a(n)e^{-t}{t^n\over n!}\right)'-{1\over p }\P^*(a)(t)\cr
&=&\sum_{n=1}^\infty n (a(n-1)-a(n)){t^n\over n!}-{1\over p }\P^*(a)(t)=\P^*(\frak{A}_p(a))(t).
\end{eqnarray*}

    \noindent (iv) Now take  $t>0$ and $a\in \ell^p$ with $1\le p\le\infty$.  For $0<s$, we get

\begin{eqnarray*}
(\P^*\circ\frak{T}_p(t))(a)(s)&=&e^{-{t\over p}}\sum_{n=0}^\infty e^{-s}{s^n\over n!}\displaystyle\sum_{j=0}^{n}{n\choose j}e^{-tj}(1-e^{-t})^{n-j}a(j) \cr
&=&e^{-{t\over p}}e^{-s}\sum_{j=0}^\infty {e^{-tj}\over j!}a(j)\displaystyle\sum_{n=j}^{\infty}{s^n(1-e^{-t})^{n-j}\over (n-j)!}\cr
&=&e^{-{t\over p}}e^{-s}\sum_{j=0}^\infty {(se^{-t})^j\over j!}a(j)\displaystyle\sum_{m=0}^{\infty}{(s(1-e^{-t}))^{m}\over m!}\cr
&=&e^{-{t\over p}}\sum_{j=0}^\infty {(se^{-t})^j\over j!}e^{-se^{-t}}a(j)=  (T^+_p(t)\circ \P^*)(a)(s),
\end{eqnarray*}
and we conclude the proof.
\end{proof}

A direct consequence of Theorem  \ref{eliz} (i) and (iii) and Theorem \ref{intert2}  (i) and (iii) is the following corollary.
\begin{corollary} Take  $1\le p\le \infty$. Then
\begin{itemize}
\item[(i)] $(\nabla+\frak{B}_p)(\P(f))= \P((-\Lambda_p+{d^0\over d{s}})(f))$ for $f\in D(\Lambda_p)\cap D({d^0\over ds})$.
\item[(iii)] $\P^*((\frak{A}_p+\Delta)(a))= (\Lambda_p+{d\over ds})(\P^*(a))$ for $a\in D(\frak{A}_p)$.
\end{itemize}
\end{corollary}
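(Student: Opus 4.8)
The plan is to exploit the linearity of the Poisson transformations $\P$ and $\P^*$ and simply add the two intertwining identities already established, once one notices that $\nabla+\frak{B}_p$ and $\frak{A}_p+\Delta$ are sums whose summands are individually intertwined: $\nabla$ with $\frac{d^0}{ds}$ (via $\P$) and $\frak{B}_p$ with $-\Lambda_p$ (via $\P$), respectively $\frak{A}_p$ with $\Lambda_p$ and $\Delta$ with $\frac{d}{ds}$ (via $\P^*$). So the whole argument is a superposition of Theorem~\ref{eliz} and Theorem~\ref{intert2}, together with the elementary observation that $\frac{d^0}{ds}f=-f'$ and $\frac{d}{ds}g=g'$ coming from the definitions of the generators of $(T_{right}(t))_{t>0}$ and $(T_{left}(t))_{t>0}$.

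For item (i), I would first fix $f\in D(\Lambda_p)\cap D(d^0/ds)$, which is exactly what makes both hypotheses below simultaneously available. By linearity of $\P$ I split $(\nabla+\frak{B}_p)(\P(f))=\nabla(\P(f))+\frak{B}_p(\P(f))$. Theorem~\ref{eliz}(i) gives $\nabla(\P(f))=-\P(f')$ and Theorem~\ref{intert2}(i) gives $\frak{B}_p(\P(f))=\P(-\Lambda_p(f))$; adding and using linearity of $\P$ once more turns the right-hand side into $\P(-\Lambda_p(f)-f')$. Since $\frac{d^0}{ds}f=-f'$, one has $-\Lambda_p(f)-f'=(-\Lambda_p+\frac{d^0}{ds})(f)$, which is the asserted identity. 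Item (iii) proceeds identically with the adjoint transform: for $a\in D(\frak{A}_p)$, linearity of $\P^*$ yields $\P^*((\frak{A}_p+\Delta)(a))=\P^*(\frak{A}_p(a))+\P^*(\Delta(a))$, then Theorem~\ref{intert2}(iii) supplies $\P^*(\frak{A}_p(a))=\Lambda_p(\P^*(a))$ and Theorem~\ref{eliz}(iii) supplies $\P^*(\Delta(a))=(\P^*(a))'$, so using $\frac{d}{ds}g=g'$ the sum equals $(\Lambda_p+\frac{d}{ds})(\P^*(a))$.

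There is essentially no obstacle beyond bookkeeping, and the only point deserving a moment's attention is the domain hypothesis. In (i) the intersection $D(\Lambda_p)\cap D(d^0/ds)$ is precisely the common domain on which the two cited identities both hold; in (iii) the hypothesis $a\in D(\frak{A}_p)$ furnishes Theorem~\ref{intert2}(iii), while Theorem~\ref{eliz}(iii) is valid on all of $\ell^p$, so no further restriction is needed. No convergence or interchange-of-limits questions arise, since all such matters were already settled in the two theorems being combined.
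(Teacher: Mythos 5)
Your proof is correct and is exactly the paper's argument: the paper derives this corollary as a direct consequence of Theorem \ref{eliz} (i), (iii) and Theorem \ref{intert2} (i), (iii), combined by linearity of $\P$ and $\P^*$, just as you do. Your attention to the domain hypotheses (the intersection $D(\Lambda_p)\cap D(d^0/ds)$ in (i), and the fact that Theorem \ref{eliz}(iii) holds on all of $\ell^p$ in (iii)) is the only bookkeeping required, and you handle it correctly.
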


\begin{remark} {\rm In the case that $1\le p <\infty$, we may prove Theorem \ref{eliz} (iii) and (iv) by duality. For example,  take $t>0$  and we get
$$
 \P^*\circ\frak{T}_p(t)= (\frak{S}_{p'}(t)\circ \P)^*= (\P\circ T^{-}_{p'}(t))^*=  T^+_p(t)\circ \P^*,
$$
where $\displaystyle{{1\over p}+{1\over p'}=1 }$.}
\end{remark}

r

\section{Perturbations of  Koopman semigroups on  $\ell^p$ } \label{savi3}

 In this main section we introduce two Koopman semigroups $(\frak{T}_{\Delta,p}(t))_{t\ge 0}$ and  $(\frak{S}_{\nabla,p}(t))_{t\ge 0}$ on $\ell^p$ whose infinitesimal generators are $\Delta+{\frak A}_{ p}$ and $\nabla +{\frak B}_{ p}$, bounded perturbations of ${\frak A}_{ p}$ and ${\frak B}_{ p}$. These $C_0$-semigroups and other two Koopman semigroups $(R_{p}(t))_{t\ge 0}$ and $(S_{p}(t))_{t\ge 0}$ on $L^p(\RR^)$ intertwine with operators $\mathcal P$ and $\mathcal P^*$, see Theorem \ref{conmutante}.

\begin{definition}\label{deff} Take $a\in \ell^p$ for $1\le p\le \infty$ and $t>0$. We define the operators
\begin{eqnarray*}\label{semis}
\frak{T}_{\Delta,p}(t)a(l)&:=&e^{-({t\over p}+1-e^{-t})}\displaystyle\sum_{j=0}^l{l \choose j}(1-e^{-t})^{l-j}e^{-tj}\sum_{n=j}^\infty{(1-e^{-t})^{n-j}\over (n-j)!}a(n),\cr
\frak{S}_{\nabla,p}(t)a(l)&:=&e^{-(t(1-{1\over p})+1-e^{-t})}\displaystyle\sum_{j=0}^l{(1-e^{-t})^{l-j}\over (l-j)!}e^{-tj}\sum_{n=j}^\infty{n \choose j}(1-e^{-t})^{n-j}a(n),
\end{eqnarray*}
for $l\ge 0.$
\end{definition}

\begin{theorem}\label{norms} Let $(\frak{T}_{\Delta,p}(t))_{t>0}$ and $(\frak{S}_{\nabla,p}(t))_{t>0}$  operators given in Definition \ref{deff}. Then
\begin{itemize}
\item[(i)] $\Vert  \frak{T}_{\Delta,p}(t)\Vert \le 1$  for $t>0$ and $1\le p\le \infty$.
\item[(ii)] $\Vert  \frak{S}_{\nabla,p}(t)\Vert \le 1$  for $t>0$ and $1\le p\le \infty$.
\item[(iii)] $(\frak{T}_{\Delta,p}(t))^*=  \frak{S}_{\nabla,p'}(t)$ and  $(\frak{S}_{\nabla,p}(t))^*=\frak{T}_{\Delta,p'}(t)$ on $\ell^p$ for $1\le p<\infty$ and ${{1\over p}+{1\over p'}=1 }$.
\end{itemize}

\end{theorem}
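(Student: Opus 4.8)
The plan is to reduce the entire statement to two \emph{factorization identities} that express the perturbed operators through the unperturbed Koopman semigroups $(\frak{T}_p(t))$, $(\frak{S}_p(t))$ and the convolution semigroups $(e^{t\Delta})$, $(e^{t\nabla})$ of Subsection \ref{savi}. Writing $b:=1-e^{-t}\ge 0$, I claim that
$$
\frak{T}_{\Delta,p}(t)=\frak{T}_p(t)\circ e^{b\Delta},\qquad \frak{S}_{\nabla,p}(t)=e^{b\nabla}\circ\frak{S}_p(t),
$$
for all $t>0$ and $1\le p\le\infty$. Once these are in hand, the three items drop out immediately, so the real content of the theorem is to establish them.

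To prove the first identity I would manipulate the double sum of Definition \ref{deff} directly. Setting $c:=e^{-t}$ and isolating the inner sum, the index shift $m=n-j$ turns $\sum_{n=j}^\infty\frac{b^{n-j}}{(n-j)!}a(n)$ into $e^{b}(e^{b\Delta}a)(j)$ by the explicit formula for $e^{t\Delta}$ in Table 1. Substituting this back, the scalar $e^{-b}$ cancels and the surviving outer sum $e^{-t/p}\sum_{j=0}^l\binom{l}{j}c^j b^{l-j}(e^{b\Delta}a)(j)$ is exactly $\frak{T}_p(t)(e^{b\Delta}a)(l)$. The second identity is symmetric: the inner sum $\sum_{n=j}^\infty\binom{n}{j}b^{n-j}a(n)$ equals $e^{t(1-1/p)}c^{-j}\frak{S}_p(t)a(j)$ by the definition of $\frak{S}_p(t)$, and inserting it collapses all prefactors, leaving $e^{-b}\sum_{j=0}^l\frac{b^{l-j}}{(l-j)!}\frak{S}_p(t)a(j)=(e^{b\nabla}\frak{S}_p(t)a)(l)$. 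In both computations the interchange of the two summations is legitimate by absolute convergence, since $a\in\ell^p$ and the kernels $(b^m/m!)_m$ are summable.

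Granting the factorizations, items (i) and (ii) follow from submultiplicativity of the operator norm together with the contraction bounds $\Vert\frak{T}_p(t)\Vert,\Vert\frak{S}_p(t)\Vert\le 1$ recalled from \cite{AM2018} and the equalities $\Vert e^{s\Delta}\Vert=\Vert e^{s\nabla}\Vert=1$ for $s\ge 0$ proved in Subsection \ref{savi}; since $s=b=1-e^{-t}\ge 0$ we get $\Vert\frak{T}_{\Delta,p}(t)\Vert\le\Vert\frak{T}_p(t)\Vert\,\Vert e^{b\Delta}\Vert\le 1$ and likewise for $\frak{S}_{\nabla,p}(t)$.

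Finally, for (iii) I would take the adjoint of each product, using $(AB)^*=B^*A^*$ and the dualities already recorded in the text, namely $(e^{z\Delta})^*=e^{z\nabla}$ on $\ell^{p'}$ and the fact that $(\frak{T}_p(t))$, $(\frak{S}_p(t))$ are mutually dual contraction semigroups, so $(\frak{T}_p(t))^*=\frak{S}_{p'}(t)$ and $(\frak{S}_p(t))^*=\frak{T}_{p'}(t)$. This yields
$$
(\frak{T}_{\Delta,p}(t))^*=(e^{b\Delta})^*(\frak{T}_p(t))^*=e^{b\nabla}\frak{S}_{p'}(t)=\frak{S}_{\nabla,p'}(t),
$$
and symmetrically $(\frak{S}_{\nabla,p}(t))^*=(\frak{S}_p(t))^*(e^{b\nabla})^*=\frak{T}_{p'}(t)\,e^{b\Delta}=\frak{T}_{\Delta,p'}(t)$, where the last equality in each line is the corresponding factorization read at the conjugate exponent; the only point requiring extra care is the endpoint $p=1$, handled through the $\ell^1$--$\ell^\infty$ pairing. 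The main obstacle is thus purely computational --- verifying the two factorization identities --- and its crux is recognizing that the convolution semigroups enter at the rescaled time $b=1-e^{-t}$ rather than at time $t$, a feature forced by the non-commutation of $\frak{A}_p$ with $\Delta$ observed in Remark \ref{notcommute}.
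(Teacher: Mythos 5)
Your proof is correct, but it takes a genuinely different route from the paper's. The paper proves (i) and (ii) by direct kernel estimates: it rearranges the double sums on $\ell^1$ (using $\sum_{l\ge j}\binom{l}{j}(1-e^{-t})^{l-j}=e^{t(j+1)}$), checks the $\ell^\infty$ bound separately, and then invokes the Riesz--Thorin interpolation theorem; item (iii) is proved by a direct Fubini computation of the pairing $\langle \frak{T}_{\Delta,p}(t)a,b\rangle$. You instead prove the factorizations $\frak{T}_{\Delta,p}(t)=\frak{T}_p(t)\circ e^{(1-e^{-t})\Delta}$ and $\frak{S}_{\nabla,p}(t)=e^{(1-e^{-t})\nabla}\circ\frak{S}_p(t)$ --- identities which the paper only records, \emph{without proof}, in the remark immediately following Theorem \ref{norms} --- and then derive (i)--(ii) from submultiplicativity of the operator norm together with $\Vert\frak{T}_p(t)\Vert,\Vert\frak{S}_p(t)\Vert\le 1$ and $\Vert e^{s\Delta}\Vert=\Vert e^{s\nabla}\Vert=1$, and (iii) from $(AB)^*=B^*A^*$, $(e^{s\Delta})^*=e^{s\nabla}$ and $(\frak{T}_p(t))^*=\frak{S}_{p'}(t)$. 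Your index-shift verifications are sound: the inner sum in Definition \ref{deff} is exactly $e^{b}\bigl(e^{b\Delta}a\bigr)(j)$, respectively $e^{t(1-1/p)}e^{tj}\,\frak{S}_p(t)a(j)$, and the prefactors cancel as you claim (no genuine interchange of summations is even needed, since the outer sum is finite). What each approach buys: yours is shorter, structurally transparent, and supplies the missing proof of the paper's remark, at the cost of leaning on facts imported from \cite{AM2018} (contractivity and mutual duality of $(\frak{T}_p(t))_{t\ge0}$ and $(\frak{S}_p(t))_{t\ge0}$); the paper's computation is self-contained apart from Riesz--Thorin and rehearses the explicit kernel manipulations that are reused later, e.g.\ in the resolvent formulas of Theorem \ref{resolvent}. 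Your treatment of the endpoint $p=1$ (adjoints taken in the $\ell^1$--$\ell^\infty$ pairing, with the factorization at $p'=\infty$ already established) is the right precaution and matches the scope of the dualities stated in the paper.
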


\begin{proof}(i) Firstly we check that $\frak{T}_{\Delta,1}(t):\ell^1\to \ell^1$  and $\Vert \frak{T}_{\Delta, 1}(t)\Vert \le 1$ for $t>0$. Note that

\begin{eqnarray*}
\sum_{l=0}^\infty \vert \frak{T}_{\Delta,1}(t)a(l)\vert &\le &\sum_{l=0}^\infty e^{-({t}+1-e^{-t})}\sum_{j=0}^l{l \choose j}(1-e^{-t})^{l-j}e^{-tj}\sum_{n=j}^\infty{(1-e^{-t})^{n-j}\over (n-j)!}\vert a(n)\vert\cr
&=&e^{-({t}+1-e^{-t})}\sum_{j=0}^\infty  e^{-tj}\sum_{l=j}^\infty
{l \choose j}(1-e^{-t})^{l-j}\sum_{n=j}^\infty{(1-e^{-t})^{n-j}\over (n-j)!}\vert a(n)\vert\cr
&=&e^{-({t}+1-e^{-t})}\sum_{n=0}^\infty \vert a(n)\vert \sum_{j=0}^n e^{-tj}{(1-e^{-t})^{n-j}\over (n-j)!}\sum_{l=j}^\infty{l \choose j}(1-e^{-t})^{l-j}.
\end{eqnarray*}
As
$$
\sum_{l=j}^\infty{l \choose j}(1-e^{-t})^{l-j}= \sum_{p=0}^\infty {p+j\choose j}(1-e^{-t})^p=\sum_{p=0}^\infty k^{j+1}(p)(1-e^{-t})^p=e^{t(j+1)}
$$
we have that
$$
\sum_{l=0}^\infty \vert \frak{T}_\Delta(t)a(l)\vert \le e^{-(1-e^{-t})}\sum_{n=0}^\infty \vert a(n)\vert \sum_{j=0}^n {(1-e^{-t})^{n-j}\over (n-j)!}\le \sum_{n=0}^\infty \vert a(n)\vert.
$$
Now, for $p=\infty$, and $a=(a(n))_{n\ge 0}\in \ell^\infty$,  we have that
\begin{eqnarray*}
\vert \frak{T}_{\Delta,\infty}(t)a(l)\vert &\le &e^{-(1-e^{-t})}\displaystyle\sum_{j=0}^l{l \choose j}(1-e^{-t})^{l-j}e^{-tj}\sum_{n=j}^\infty{(1-e^{-t})^{n-j}\over (n-j)!}\Vert a\Vert_\infty \cr
&\le & \Vert a\Vert_\infty \sum_{j=0}^l{l \choose j}(1-e^{-t})^{l-j}e^{-tj}= \Vert a\Vert_\infty,
\end{eqnarray*}
for $l\ge 0$. By the Riesz-Thorin interpolation theorem, we conclude that $\frak{T}_{\Delta,p}(t):\ell^p\to \ell^p$ and  $\Vert  \frak{T}_{\Delta,p}(t)\Vert \le 1$  for $t>0$ and $1\le p\le \infty$.

(ii) Similarly we check first $\frak{S}_{\nabla,1}(t):\ell^1\to \ell^1$  and $\Vert \frak{S}_{\nabla, 1}(t)\Vert \le 1$ for $t>0$.  Note that
\begin{eqnarray*}
\sum_{l=0}^\infty \vert \frak{S}_{\nabla,1}(t)a(l)\vert&\le&e^{-(1-e^{-t})}\sum_{l=0}^\infty\sum_{j=0}^l{(1-e^{-t})^{l-j}\over (l-j)!}e^{-tj}\sum_{n=j}^\infty{n \choose j}(1-e^{-t})^{n-j}\vert a(n)\vert,\cr
&=&e^{-(1-e^{-t})}\sum_{j=0}^\infty e^{-tj}\sum_{l=j}^\infty{(1-e^{-t})^{l-j}\over (l-j)!}\sum_{n=j}^\infty{n \choose j}(1-e^{-t})^{n-j}\vert a(n)\vert,\cr
&=&e^{-(1-e^{-t})}\sum_{n=0}^\infty \vert a(n)\vert \sum_{j=0}^n{n \choose j}e^{-tj}(1-e^{-t})^{n-j}\sum_{k=0}^\infty{(1-e^{-t})^{k}\over k!},\cr
&=&\sum_{n=0}^\infty \vert a(n)\vert \sum_{j=0}^n{n \choose j}e^{-tj}(1-e^{-t})^{n-j}=\sum_{n=0}^\infty \vert a(n)\vert. \cr
\end{eqnarray*}
Now take $p=\infty$ and in this case
\begin{eqnarray*}
\vert\frak{S}_{\nabla,\infty}(t)a(l)\vert &\le &e^{-(t+1-e^{-t})}\displaystyle\sum_{j=0}^l{(1-e^{-t})^{l-j}\over (l-j)!}e^{-tj}\sum_{n=j}^\infty{n \choose j}(1-e^{-t})^{n-j}\Vert a\Vert_{\infty}\cr
&= &e^{-(t+1-e^{-t})}\displaystyle\sum_{j=0}^l{(1-e^{-t})^{l-j}\over (l-j)!}e^{-tj}\sum_{k=0}^\infty{k+j \choose j}(1-e^{-t})^{k}\Vert a\Vert_{\infty}\cr
&= &e^{-(1-e^{-t})}\displaystyle\sum_{j=0}^l{(1-e^{-t})^{l-j}\over (l-j)!}\Vert a\Vert_{\infty}\le \Vert a\Vert_{\infty}, \cr
\end{eqnarray*}
for $l\ge 0$. By the  Riesz-Thorin interpolation theorem, we aslso conclude that $\frak{S}_{\nabla,p}(t):\ell^p\to \ell^p$ and  $\Vert  \frak{S}_{\nabla,p}(t)\Vert \le 1$  for $t>0$ and $1\le p\le \infty$.

(iii) Take $a\in \ell^p$ and $b\in \ell^{p'}$ for  $1\le p<\infty$ and ${{1\over p}+{1\over p'}=1 }$.Then
\begin{eqnarray*}
\langle \frak{T}_{\Delta,p}(t)(a), b\rangle&=&e^{-({t\over p}+1-e^{-t})}\sum_{l=0}^\infty b(l)\displaystyle\sum_{j=0}^l{l \choose j}(1-e^{-t})^{l-j}e^{-tj}\sum_{n=j}^\infty{(1-e^{-t})^{n-j}\over (n-j)!}a(n)\cr
&=&e^{-({t\over p}+1-e^{-t})}\sum_{j=0}^\infty e^{-tj} \displaystyle\sum_{l=j}^\infty{l \choose j}(1-e^{-t})^{l-j}b(l)\sum_{n=j}^\infty{(1-e^{-t})^{n-j}\over (n-j)!}a(n)\cr
&=&e^{-(t(1-{1\over p'})+1-e^{-t})}\sum_{n=0}^\infty a(n) \sum_{j=0}^n{(1-e^{-t})^{n-j}\over (n-j)!}e^{-tj} \displaystyle\sum_{l=j}^\infty{l \choose j}(1-e^{-t})^{l-j}b(l)\cr
&=&\sum_{n=0}^\infty a(n)\frak{S}_{\nabla,p}(t)b(n)=\langle a, \frak{S}_{\nabla,p}(t)(b)\rangle, \cr
\end{eqnarray*}
and we conclude the proof.
\end{proof}

\begin{remark}{\rm Note that we may express $(\frak{T}_{\Delta,p}(t))_{t>0}$ and $(\frak{S}_{\nabla,p}(t))_{t>0}$  $C_0$-semigroups
in terms of $(\frak{T}_{p}(t))_{t>0}$ and $(\frak{S}_{p}(t))_{t>0}$. In   fact
$$
\frak{T}_{\Delta,p}(t)= \frak{T}_{p}(t)\circ e^{(1-e^{-t})\Delta}, \qquad
\frak{S}_{\nabla,p}(t)=  e^{(1-e^{-t})\nabla}\circ\frak{S}_{p}(t),
$$
for $t>0$.}

\end{remark}

Given $1\le p\le \infty$ and a function $f \in L^p(\RR^+)$, we consider the function $S_{ p}(t)f$ defined by
\begin{equation}\label{semigrp}
S_{p}(t)f(s):= e^{-t\over p}f(e^{-t}s+1-e^{-t}), \qquad s>0,
\end{equation}
and $t\ge 0.$  The operators $(S_p(t))_{t>0}$ is a contractive $C_0$-semigroups of operators on $L^p(\R^+)$,  $1\le p<\infty$, which infinitesimal generator $(A_p, D(A_p))$ is defined by
$$
(A_pf)(s):=(1-s)f'(s)-{1\over p}f(s), \qquad f \in D(A_p),\quad s>0,
$$
and $D(A_p)=\{f \in L^p(\R^+) \,\, \vert \,\, A_pf\in L^p(\R^+)\},$ see \cite[Theorem 8]{MP}. For $f\in D(\Lambda_p)\cap D({d\over ds})$ then $f\in D(A_p)$ and  $A_pf=(\Lambda_p+{d\over ds})f$.

Similarly, we consider the function $R_{ p}(t)f$ defined by
\begin{equation}\label{semigrp2}
R_{p}(t)f(s):= e^{t\over p}f(e^{t}s+1-e^{t})\chi_{(1-e^{-t}, \infty)}(s), \qquad s>0,
\end{equation}
and $t\ge 0.$  The operators $(R_p(t))_{t>0}$ is also a contractive $C_0$-semigroups of operators in $L^p(\R^+)$,  $1\le p<\infty$, which infinitesimal generator $(B_p, D(B_p))$ is defined by
$$
(B_pf)(s):=(s-1)f'(s)+{1\over p}f(s), \qquad f \in D(A_p),\quad s>0,
$$
and $D(B_p)=\{f \in L^p(\R^+) \,\, \vert \,\, B_pf\in L^p(\R^+)\}.$ For $f\in D(\Lambda_p)\cap D({d^0\over ds})$ then $f\in D(B_p)$ and  $B_pf=(-\Lambda_p+{d\over ds})f$.

 We also may express $C_0$-semigroups  $(S_p(t))_{t>0}$ and $(R_p(t))_{t>0}$
in terms of the $C_0$-group $(T_p(t))_{t\in \R}$, defined in (\ref{eq:semigroups}), i.e.,
$$
S_p(t)= T_{left}(1-e^{-t})\circ T_p^+(t),  \qquad R_p(t)= T_p^-(t)\circ T_{right}(1-e^{-t}),
$$
for $t>0$.

\begin{theorem}\label{conmutante} Take $1\le p \le \infty$, the one-parameter families  $(S_p(t))_{t>0}$ and $(R_p(t))_{t>0}$   defined on $L^p(\R^+)$ in (\ref{semigrp}) and (\ref{semigrp2}) and $(\frak{T}_{\Delta,p}(t))_{t>0}$ and $(\frak{S}_{\nabla,p}(t))_{t>0}$  defined on $\ell^p$ in Definition \ref{deff}. Then
\begin{itemize}
    \item[(i)] $\frak{S}_{\nabla,p}(t)\circ \P= \P\circ R_p(t)$ for $t>0$.
     \item[(ii)] $ \P^*\circ\frak{T}_{\Delta,p}(t)=  S_p(t)\circ \P^*$ for $t>0$.
\end{itemize}
\end{theorem}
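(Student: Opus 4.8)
The plan is to reduce both identities to the intertwining relations already proved in Theorems \ref{eliz} and \ref{intert2}, using the factorizations of the perturbed semigroups recorded just above together with the matching factorizations of $S_p$ and $R_p$ into a dilation and a translation. Straight from the definitions (\ref{semigrp}), (\ref{semigrp2}) and (\ref{eq:semigroups}) one checks
\begin{equation*}
S_p(t)=T_p^+(t)\circ T_{left}(1-e^{-t}),\qquad R_p(t)=T_{right}(1-e^{-t})\circ T_p^-(t),
\end{equation*}
and I would pin these two composition orders down first, since the argument is genuinely sensitive to them.

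For (ii) I would write, using $\frak{T}_{\Delta,p}(t)=\frak{T}_p(t)\circ e^{(1-e^{-t})\Delta}$,
\begin{equation*}
\P^*\circ\frak{T}_{\Delta,p}(t)=\P^*\circ\frak{T}_p(t)\circ e^{(1-e^{-t})\Delta}=T_p^+(t)\circ\P^*\circ e^{(1-e^{-t})\Delta}=T_p^+(t)\circ T_{left}(1-e^{-t})\circ\P^*,
\end{equation*}
where the second equality is Theorem \ref{intert2}(iv) and the third is Theorem \ref{eliz}(iv) with parameter $1-e^{-t}$; by the factorization above the right-hand side is $S_p(t)\circ\P^*$. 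Identity (i) is symmetric: from $\frak{S}_{\nabla,p}(t)=e^{(1-e^{-t})\nabla}\circ\frak{S}_p(t)$, Theorem \ref{intert2}(ii) and Theorem \ref{eliz}(ii) give
\begin{equation*}
\frak{S}_{\nabla,p}(t)\circ\P=e^{(1-e^{-t})\nabla}\circ\P\circ T_p^-(t)=\P\circ T_{right}(1-e^{-t})\circ T_p^-(t)=\P\circ R_p(t).
\end{equation*}
This route has the advantage of covering all $1\le p\le\infty$ at once, because the intertwining lemmas it invokes are valid on that whole range.

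As an order-independent cross-check I would also run the direct computation, which is the safest fallback. For (ii) one expands $\P^*(\frak{T}_{\Delta,p}(t)a)(s)=e^{-s}\sum_l\frac{s^l}{l!}\,\frak{T}_{\Delta,p}(t)a(l)$, inserts Definition \ref{deff}, and performs two interchanges of summation: summing first in $l$ produces the exponential series $\sum_m \frac{(s(1-e^{-t}))^m}{m!}=e^{s(1-e^{-t})}$, and summing next in $j$ produces, by the binomial theorem, the factor $(se^{-t}+1-e^{-t})^n/n!$; recombining the exponential weights then yields exactly $e^{-t/p}[\P^*a](e^{-t}s+1-e^{-t})=S_p(t)\P^*a(s)$. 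For (i) I would start on the $L^p$ side, write $\P(R_p(t)f)(l)$ as an integral over $(1-e^{-t},\infty)$, substitute $u=e^ts+1-e^t$ (carrying the domain onto $(0,\infty)$ with Jacobian $e^{-t}$), and expand $(e^{-t}u+1-e^{-t})^l$ by the binomial theorem; on the $\ell^p$ side the inner sum over $n$ collapses through $\sum_m\frac{((1-e^{-t})r)^m}{m!}=e^{(1-e^{-t})r}$ to $\frac{1}{j!}\int_0^\infty f(r)r^je^{-e^{-t}r}\,dr$, and the two expressions match term by term in $j$.

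The hard part is bookkeeping rather than conceptual: one must carry the weights $e^{-t/p}$, $e^{-t(1-1/p)}$ and $e^{-(1-e^{-t})}$ and the composition orders of $T_p^\pm$ with the translations correctly, and justify every interchange of sum and integral. The last point is legitimate because all series and integrals converge absolutely -- $\frak{T}_{\Delta,p}(t)$ and $\frak{S}_{\nabla,p}(t)$ are contractions by Theorem \ref{norms}, $\P$ and $\P^*$ are bounded by Theorem \ref{convoss}, and on finitely supported sequences (resp. compactly supported $f$) the manipulations are finite -- so Fubini--Tonelli applies and the identities extend to all of $\ell^p$ and $L^p(\R^+)$ by density and continuity.
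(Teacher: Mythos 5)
Your proof is correct, but it takes a genuinely different route from the paper. The paper proves both identities by direct computation: it expands $\P(R_p(t)f)(n)$ and $S_p(t)(\P^*(a))(r)$ as integrals/series, applies the binomial theorem and Fubini-type interchanges, and matches the result against Definition \ref{deff} term by term. You instead factor the perturbed semigroups as $\frak{T}_{\Delta,p}(t)=\frak{T}_p(t)\circ e^{(1-e^{-t})\Delta}$ and $\frak{S}_{\nabla,p}(t)=e^{(1-e^{-t})\nabla}\circ\frak{S}_p(t)$ (the remark preceding the theorem), factor $S_p$ and $R_p$ into a dilation and a translation, and then chain the already-proved intertwinings of Theorems \ref{eliz} and \ref{intert2}; this reduces the theorem to three applications of known identities and pushes all analytic work (Fubini, convergence) back into those earlier results. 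A notable merit of your write-up is that you caught and corrected the composition orders: under the standard convention $(A\circ B)f=A(Bf)$, the correct factorizations are $S_p(t)=T_p^+(t)\circ T_{left}(1-e^{-t})$ and $R_p(t)=T_{right}(1-e^{-t})\circ T_p^-(t)$ (equivalently $T_{left}(e^t-1)\circ T_p^+(t)$ and $T_p^-(t)\circ T_{right}(e^t-1)$), whereas the display in the paper just before the theorem has the dilation and translation in the opposite, incorrect order --- since $T_p^\pm$ do not commute with the translations, your argument genuinely depends on this correction. The only things your primary route leaves implicit are the one-line verifications of the two discrete factorizations (the paper's remark states them without proof) and of the $S_p$, $R_p$ factorizations, both of which are elementary interchanges of absolutely convergent sums; your direct-computation fallback, which essentially reproduces the paper's own proof, covers any residual doubt. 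What the paper's approach buys is self-containedness; what yours buys is brevity, a structural explanation of \emph{why} the theorem holds (perturbed semigroup $=$ old semigroup composed with a translation semigroup, on both sides of $\P$ and $\P^*$), and uniform validity for all $1\le p\le\infty$ with no extra effort.
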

\begin{proof} (i) Take $ f\in L^p(\R^+)$. Then
\begin{eqnarray*}
&\quad&(\P(R_p(t)f))(n)=e^{t\over p}\int_{1-e^{-t}}^\infty e^{-r}{r^n\over n!}f(e^tr+1-e^t) dr\cr
 &\quad&\quad = e^{-(t(1-{1\over p})+1-e^{-t})}\int_{0}^\infty e^{-e^{-t}u}{(e^{-t}u+1-e^{-t})^n\over n!}f(u) du\cr
&\quad&\quad = e^{-(t(1-{1\over p})+1-e^{-t})}\sum_{j=0}^n{e^{-tj}\over j!}{(1-e^{-t})^{n-j}\over (n-j)!}\int_{0}^\infty e^{u(1-e^{-t})}e^{-u}u^jf(u) du\cr
&\quad&\quad = e^{-(t(1-{1\over p})+1-e^{-t})}\sum_{j=0}^n{e^{-tj}\over j!}{(1-e^{-t})^{n-j}\over (n-j)!}\sum_{l=0}^\infty {(1-e^{-t})^l\over l!}\int_{0}^\infty e^{-u}u^{j+l}f(u) du\cr
&\quad&\quad = e^{-(t(1-{1\over p})+1-e^{-t})}\sum_{j=0}^n{e^{-tj}\over j!}{(1-e^{-t})^{n-j}\over (n-j)!}\sum_{m=j}^\infty {m!(1-e^{-t})^{m-j}\over (m-j)!}\int_{0}^\infty e^{-u}{u^{m}\over m!}f(u) du\cr
&\quad&\quad = e^{-(t(1-{1\over p})+1-e^{-t})}\sum_{j=0}^n{e^{-tj}}{(1-e^{-t})^{n-j}\over (n-j)!}\sum_{m=j}^\infty {m\choose j} (1-e^{-t})^{m-j}\P f(m)\cr
&\quad&\quad =\frak{S}_{\nabla,p}(t)( \P f)(n),
\end{eqnarray*}
for $n\ge 0$.

To prove the item (ii), take $a=(a(n))_{n\ge 0}\in \ell^p$ for $1\le p \le \infty$. Then
\begin{eqnarray*}
&\quad &S_p(t)(\P^*(a))(r)= e^{-t\over p}e^{-(e^{-t} r+1-e^{-t})}\sum_{n=0}^\infty {a(n)}{(e^{-t} r+1-e^{-t})^n\over n!}\cr
&\quad&\quad = e^{-({t\over p}+1-e^{-t}+r)}\sum_{n=0}^\infty {a(n)}{(e^{-t} r+1-e^{-t})^n\over n!}e^{r(1-e^{-t})}\cr
&\quad&\quad = e^{-({t\over p}+1-e^{-t}+r)}\sum_{n=0}^\infty {a(n)}\sum_{j=0}^n{e^{-tj} \over j!}{(1-e^{-t})^{n-j}\over (n-j)!}\sum_{m=0}^\infty {r^{m+j}(1-e^{-t})^m\over m!}\cr
&\quad&\quad = e^{-({t\over p}+1-e^{-t}+r)}\sum_{j=0}^\infty {e^{-tj} \over j!}\sum_{n=j}^\infty {a(n)}{(1-e^{-t})^{n-j}\over (n-j)!}\sum_{l=j}^\infty {r^{l}(1-e^{-t})^{l-j}\over (l-j)!}\cr
&\quad&\quad = e^{-({t\over p}+1-e^{-t}+r)}\sum_{l=0}^\infty r^{l} \sum^{l}_{j=0}{e^{-tj} \over j!}{(1-e^{-t})^{l-j}\over (l-j)!}\sum_{n=j}^\infty {a(n)}{(1-e^{-t})^{n-j}\over (n-j)!}\cr
&\quad&\quad = e^{-r}\sum_{l=0}^\infty {r^{l}\over l!} e^{-({t\over p}+1-e^{-t})}\sum^{l}_{j=0}{l\choose j}e^{-tj}(1-e^{-t})^{l-j}\sum_{n=j}^\infty {a(n)}{(1-e^{-t})^{n-j}\over (n-j)!}\cr
&\quad&\quad =  \P^*(\frak{T}_{\Delta,p}(t)(a))(r),
\end{eqnarray*}
for $t>0$ and we conclude the result.
\end{proof}

We use Theorem \ref{conmutante} to conclude that $(\frak{T}_{\Delta,p}(t))_{t>0}$ and $(\frak{S}_{\nabla,p}(t))_{t>0}$  are $C_0$-semigroups in the next theorem

\begin{theorem}\label{inge} Take $1\le p < \infty$. The one-parameter families   $(\frak{T}_{\Delta,p}(t))_{t>0}$ and $(\frak{S}_{\nabla,p}(t))_{t>0}$  defined on $\ell^p$ in Definition \ref{deff} are contractive $C_0$-semigroups. Moreover

\begin{itemize}
\item[(i)] the infinitesimal generator of $(\frak{T}_{\Delta,p}(t))_{t>0}$ is the closed operator $({\frak A}_p+\Delta, D({\frak A}_p))$. We denote this operator sum by ${\frak A}_{\Delta, p}$
\item[(ii)] the infinitesimal generator of $(\frak{S}_{\nabla,p}(t))_{t>0}$ is the closed operator $({\frak B}_p+\nabla, D({\frak B}_p))$. We denote this operator sum by ${\frak B}_{\nabla, p}$
    \item[(iii)]  operators ${\frak A}_{\Delta, p}$ and  ${\frak B}_{\nabla, p'}$  and semigroups  $(\frak{T}_{\Delta,p}(t))_{t>0}$ and $(\frak{S}_{\nabla,p'}(t))_{t>0}$   are adjoint
to each other, i.e., $({\frak A}_{\Delta, p})^*={\frak B}_{\nabla, p'}$ and $(\frak{T}_{\Delta,p}(t))^*=\frak{S}_{\nabla,p'}(t)$ on $\ell^{p'}$ where $1/p + 1/p'= 1$ and $t>0$.
\end{itemize}

\end{theorem}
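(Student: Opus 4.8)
The plan is to obtain all three claims from the factorizations recorded just above the statement,
\[
\frak{T}_{\Delta,p}(t)=\frak{T}_p(t)\circ e^{(1-e^{-t})\Delta},\qquad
\frak{S}_{\nabla,p}(t)=e^{(1-e^{-t})\nabla}\circ\frak{S}_p(t),
\]
together with the intertwining identities of Theorem \ref{conmutante} and the bounded perturbation theorem \cite[Chapter III]{En-Na-00}. Contractivity is already furnished by Theorem \ref{norms}(i)--(ii) and the semigroup adjoint relation in (iii) is exactly Theorem \ref{norms}(iii); so the genuinely new content is the semigroup law, strong continuity, and the identification of the generators.

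First I would establish the semigroup law. For $(\frak{T}_{\Delta,p}(t))$, applying $\P^*$ and using $\P^*\circ\frak{T}_{\Delta,p}(t)=S_p(t)\circ\P^*$ twice gives
\[
\P^*\bigl(\frak{T}_{\Delta,p}(s)\frak{T}_{\Delta,p}(t)a\bigr)=S_p(s)S_p(t)\P^*a=S_p(s+t)\P^*a=\P^*\bigl(\frak{T}_{\Delta,p}(s+t)a\bigr),
\]
and injectivity of $\P^*$ (Proposition \ref{conne}(ii)) forces $\frak{T}_{\Delta,p}(s)\frak{T}_{\Delta,p}(t)=\frak{T}_{\Delta,p}(s+t)$. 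For $(\frak{S}_{\nabla,p}(t))$ the same computation with $\frak{S}_{\nabla,p}(t)\circ\P=\P\circ R_p(t)$ shows that $\frak{S}_{\nabla,p}(s)\frak{S}_{\nabla,p}(t)$ and $\frak{S}_{\nabla,p}(s+t)$ agree on the range of $\P$, which is dense by Proposition \ref{conne}(iii); both being contractions, they agree on all of $\ell^p$.

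Next I would read off the generators from the factorizations, which simultaneously yields strong continuity. Fix $a\in D(\frak{A}_p)$ and split
\[
\tfrac1t\bigl(\frak{T}_{\Delta,p}(t)a-a\bigr)=\frak{T}_p(t)\,\tfrac1t\bigl(e^{(1-e^{-t})\Delta}a-a\bigr)+\tfrac1t\bigl(\frak{T}_p(t)a-a\bigr).
\]
Since $\Delta$ is bounded and $(1-e^{-t})/t\to 1$, the bracket in the first term tends to $\Delta a$, and $\|\frak{T}_p(t)\|\le 1$ with the strong continuity of $(\frak{T}_p(t))$ lets me pass $\frak{T}_p(t)$ to the limit; the second term tends to $\frak{A}_p a$. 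Hence $\lim_{t\to 0^+}\tfrac1t(\frak{T}_{\Delta,p}(t)a-a)=(\frak{A}_p+\Delta)a$, which in particular gives $\frak{T}_{\Delta,p}(t)a\to a$ on the dense set $D(\frak{A}_p)$; combined with the semigroup law and uniform contractivity, $(\frak{T}_{\Delta,p}(t))$ is a contractive $C_0$-semigroup whose generator $C$ extends $(\frak{A}_p+\Delta,D(\frak{A}_p))$. Because $\Delta$ is bounded, $C-\Delta$ again generates a $C_0$-semigroup and extends the generator $\frak{A}_p$, so by the maximality of generators $C-\Delta=\frak{A}_p$; thus $C=\frak{A}_p+\Delta$ with domain exactly $D(\frak{A}_p)$, proving (i). Part (ii) is identical, using $\|e^{(1-e^{-t})\nabla}\|\le 1$ and the limit $(\frak{B}_p+\nabla)a$.

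For (iii), the semigroup identity $(\frak{T}_{\Delta,p}(t))^*=\frak{S}_{\nabla,p'}(t)$ is Theorem \ref{norms}(iii); for the generators I would use the adjoint-of-a-sum rule for a closed densely defined operator plus a bounded one, $(\frak{A}_p+\Delta)^*=\frak{A}_p^*+\Delta^*$, together with the recorded identities $\frak{A}_p^*=\frak{B}_{p'}$ and $\Delta^*=\nabla$, to conclude $\frak{A}_{\Delta,p}^*=\frak{B}_{p'}+\nabla=\frak{B}_{\nabla,p'}$ (the adjoint domain being unchanged by the bounded summand). The step I expect to be the crux is the passage from ``$C$ extends $\frak{A}_p+\Delta$'' to equality: the differentiation only controls the difference quotient on $D(\frak{A}_p)$ and cannot by itself exclude a strictly larger domain, so one genuinely needs that $\frak{A}_p$ (hence $\frak{A}_p+\Delta$) is already a generator and the uniqueness principle that a generator extending a generator must coincide with it. A secondary point to watch is the endpoint $p=1$, where $\ell^1$ is non-reflexive and the adjoint relations in (iii) must be read at the level of operators rather than transported through duality of semigroups.
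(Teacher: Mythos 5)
Your proof is correct, and while part of it coincides with the paper, its core takes a genuinely different route. The semigroup law (intertwining through $\P^*$ and $\P$ together with injectivity, respectively density of the range, from Proposition \ref{conne}), the contractivity, and the semigroup duality in (iii) are handled exactly as in the paper's proof. However, the paper establishes strong continuity by estimating $\Vert \frak{T}_{\Delta,p}(t)a-a\Vert_1$ directly on $\ell^1$ and passing to $\ell^p$ by density, and it identifies the generators by differentiating the explicit double-sum formula for $\frak{T}_{\Delta,p}(t)a(l)$ coordinate by coordinate at $t=0$; that computation is only coordinatewise, and it leaves implicit both the $\ell^p$-norm convergence of the difference quotients and the fact that the generator's domain is not strictly larger than $D(\frak{A}_p)$. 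Your argument via the factorization $\frak{T}_{\Delta,p}(t)=\frak{T}_p(t)\circ e^{(1-e^{-t})\Delta}$ (stated as a remark in the paper but never used in its proof of this theorem) settles both points at once: the split difference quotient converges in norm to $(\frak{A}_p+\Delta)a$ for $a\in D(\frak{A}_p)$, which simultaneously yields strong continuity on a dense set, and the maximality step --- $C-\Delta$ is a generator extending the generator $\frak{A}_p$, hence coincides with it --- is exactly what pins the domain down to $D(\frak{A}_p)$, i.e.\ the precise content of (i) and (ii). Likewise, in (iii) the paper's citation of Theorem \ref{norms} (iii) covers only the semigroup identity $(\frak{T}_{\Delta,p}(t))^*=\frak{S}_{\nabla,p'}(t)$, whereas your operator-level computation $({\frak A}_{\Delta,p})^*=\frak{A}_p^*+\Delta^*=\frak{B}_{p'}+\nabla={\frak B}_{\nabla,p'}$ supplies the generator identity as well, uniformly in $1\le p<\infty$ and without any appeal to reflexivity. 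In short, the paper's method is more explicit and computationally self-contained, while yours is more rigorous on the two delicate points (mode of convergence and domain equality) and shorter, at the cost of invoking the bounded perturbation theorem and the uniqueness principle for generators.
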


\begin{proof} First of all, we check the semigroup property. Take $t,s>0$ and $a=(a(n))_{n\ge 0}\in \ell^p$ for $1\le p<\infty$. By Theorem \ref{conmutante} (ii), we obtain that
\begin{eqnarray*}
 \P^*(\frak{T}_{\Delta,p}(t+s)(a))(r)&=& S_p(t+s)(\P^*(a))(r)=S_p(t)(S_p(s)(\P^*(a)))(r)\cr
 &=&S_p(t)(\P^*(\frak{T}_{\Delta,p}(s)(a))(r)= \P^*( \frak{T}_{\Delta,p}(t)\frak{T}_{\Delta,p}(s)(a))(r),
\end{eqnarray*}
for $r>0$. By Proposition \ref{conne} (ii), we obtain that $\frak{T}_{\Delta,p}(t+s)(a)=  \frak{T}_{\Delta,p}(t)\frak{T}_{\Delta,p}(s)(a)$.  To show the strongly continuity, it is enough to check that $\frak{T}_{\Delta,p}(t)(a)\to a$ when $t\to 0$ on $\ell^1$ and conclude the result  on $\ell^p$ by density. Then
\begin{eqnarray*}
&\quad&\Vert \frak{T}_{\Delta,p}(t)(a)- a\Vert_1\cr
&\quad&\qquad= \sum_{l=0}^\infty \lvert e^{-({t\over p}+1-e^{-t})}\displaystyle\sum_{j=0}^l{l \choose j}(1-e^{-t})^{l-j}e^{-tj}\sum_{n=j}^\infty{(1-e^{-t})^{n-j}\over (n-j)!}a(n)- a(l)\rvert\cr
&\quad&\qquad\le\sum_{l=0}^\infty  \displaystyle\sum_{j=0}^l{l \choose j}(1-e^{-t})^{l-j}e^{-tj}\lvert e^{-({t\over p}+1-e^{-t})} \sum_{n=j}^\infty{(1-e^{-t})^{n-j}\over (n-j)!}a(n)- a(l)\rvert.
\end{eqnarray*}
Note that $(1-e^{-t})^{l-j}\to 0$ and $(1-e^{-t})^{n-j}\to 0$ for $l,n>j$  and $e^{-({t\over p}+1-e^{-t})}a(l)-a(l)\to 0$, in the case that $l=n=j$ when $t\to 0$. By Theorem \ref{norms} (i), we conclude that $(\frak{T}_{\Delta,p}(t))_{t>0}$  is a contractive $C_0$-semigroups of operators on $\ell^p$ for $1\le p<\infty$. Similarly, we prove that $(\frak{S}_{\nabla,p}(t))_{t>0}$ is also a contractive $C_0$-semigroups of operators on $\ell^p$ for $1\le p<\infty$, where we use Proposition \ref{conne} (iii).

\noindent (i) Now, we check the infinitesimal generator of  $(\frak{T}_{\Delta,p}(t))_{t>0}$. Take  $a=(a(n))_{n\ge 0}\in \ell^p$ and $l\ge 1$,
\begin{eqnarray*}
&\quad&{\partial \over \partial t}\left(\displaystyle\sum_{j=0}^l{l \choose j}(1-e^{-t})^{l-j}e^{-tj}\sum_{n=j}^\infty{(1-e^{-t})^{n-j}\over (n-j)!}a(n)\right)\cr
&\quad&=\sum_{j=0}^{l-1}{l \choose j}(l-j)(1-e^{-t})^{l-j}e^{-t(j+1)}\sum_{n=j}^\infty{(1-e^{-t})^{n-j}\over (n-j)!}a(n)\cr
&\quad&\qquad +\sum_{j=1}^l{l \choose j}(1-e^{-t})^{l-j}(-j)e^{-tj}\sum_{n=j}^\infty{(1-e^{-t})^{n-j}\over (n-j)!}a(n)\cr
&\quad&\qquad +\sum_{j=0}^l{l \choose j}(1-e^{-t})^{l-j}e^{-tj}\sum_{n=j+1}^\infty{(1-e^{-t})^{n-j-1}\over (n-j-1)!}a(n)
\end{eqnarray*}
for $t>0$. Then
\begin{eqnarray*}
\lim_{t\to 0}{\frak{T}_{\Delta,p}(t)(a)(l)-a(l)\over t}&=& \left.{\partial \over \partial t}\frak{T}_{\Delta,p}(t)(a)(l)\right\vert_{t=0}= -(1+{1\over p})a(l) +l a(l-1)-la(l)+a(l+1)\cr
&=&l(a(l-1)-a(l))-{1\over p}a(l)+a(l+1)-a(l)= ({\frak A}_p+\Delta)(a)(l),
\end{eqnarray*}
for $a\in D({\frak A}_p)$. Note that $({\frak A}_p+\Delta)(a)(0)=a(1)-(1+{1\over p})a(0)$ for $a\in D({\frak A}_p)$.

\noindent (ii) Now we check the infinitesimal generator of  $(\frak{S}_{\Delta,p}(t))_{t>0}$. Take  $b=(b(n))_{n\ge 0}\in \ell^p$ and $l\ge 1$,
\begin{eqnarray*}
&\quad&{\partial \over \partial t}\left(\displaystyle\sum_{j=0}^l{(1-e^{-t})^{l-j}\over (l-j)!}e^{-tj}\sum_{n=j}^\infty{n \choose j}(1-e^{-t})^{n-j}b(n)\right)\cr
&\quad&=\displaystyle\sum_{j=1}^l{(1-e^{-t})^{l-j}\over (l-j)!}(-j)e^{-tj}\sum_{n=j}^\infty{n \choose j}(1-e^{-t})^{n-j}b(n)\cr
&\quad&\qquad +\displaystyle\sum_{j=0}^{l-1}{(1-e^{-t})^{l-1-j}\over (l-1-j)!}e^{-tj}\sum_{n=j}^\infty{n \choose j}(1-e^{-t})^{n-j}b(n)\cr
&\quad&\qquad +\displaystyle\sum_{j=0}^l{(1-e^{-t})^{l-j}\over (l-j)!}e^{-tj}\sum_{n={j+1}}^\infty{n \choose j}(1-e^{-t})^{n-j}(l-j)b(n)
\end{eqnarray*}
for $t>0$. Then we get that
\begin{eqnarray*}
&\quad&\lim_{t\to 0}{\frak{S}_{\Delta,p}(t)(b)(l)-b(l)\over t}= \left.{\partial \over \partial t}\frak{S}_{\Delta,p}(t)(b)(l)\right\vert_{t=0}\cr
&\quad&\quad= -(2-{1\over p})b(l)-lb(l) +b(l-1)+(l+1)b(l+1)\cr
&\quad&\quad =(l+1)b(l+1)-lb(l)-(1-{1\over p})b(l)+b(l-1)-b(l)= ({\frak B}_p+\nabla)(b)(l),
\end{eqnarray*}
for $b\in D({\frak B}_p)$. Note that $({\frak B}_p+\nabla)(b)(0)=b(1)-(2-{1\over p})b(0)$ for $b\in D({\frak B}_p)$.

 The item (iii) is proven in Theorem \ref{norms} (iii). \end{proof}

 Since $C_0$-semigroups $(\frak{T}_{\Delta,p}(t))_{t>0}$ and $(\frak{S}_{\nabla,p}(t))_{t>0}$  are generated by bounded perturbations of infinitesimal generators of $(\frak{T}_{p}(t))_{t>0}$ and $(\frak{S}_{p}(t))_{t>0}$, we apply the variation of
parameters formula for the perturbed semigroup to get the following corollary, see \cite[Corollary III.1.7]{En-Na-00}.

 \begin{corollary} Take $1\le p < \infty$, $a\in \ell^p$  and  $(\frak{T}_{\Delta,p}(t))_{t>0}$ and $(\frak{S}_{\nabla,p}(t))_{t>0}$ the contractive $C_0$-semigroups defined on $\ell^p$ in Definition \ref{deff}.
 \begin{itemize}
 \item[(i)] For $t>0$ and $n\ge 0$, we have that
 $$
 \frak{T}_{\Delta,p}(t)(a)(n)= \frak{T}_{p}(t)(a)(n)+\int_0^t \frak{T}_{p}(t-s)\left( \frak{T}_{\Delta,p}(s)(a)(n+1)-\frak{T}_{\Delta,p}(s)(a)(n) \right)ds.
 $$
 \item[(ii)] For $t>0$ and $n\ge 1$, we have that
 $$
 \frak{S}_{\nabla,p}(t)(a)(n)= \frak{S}_{p}(t)(a)(n)+\int_0^t \frak{S}_{p}(t-s)\left( \frak{S}_{\nabla,p}(s)(a)(n-1)-\frak{S}_{\Delta,p}(s)(a)(n) \right)ds,
 $$
 and
 $$
 \frak{S}_{\nabla,p}(t)(a)(0)= \frak{S}_{p}(t)(a)(0)-\int_0^t \frak{S}_{p}(t-s)\frak{S}_{\Delta,p}(s)(a)(0) ds.
 $$
 \end{itemize}
 \end{corollary}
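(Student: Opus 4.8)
The plan is to invoke the abstract variation of parameters (Duhamel) formula for bounded perturbations of $C_0$-semigroup generators, exactly as stated in \cite[Corollary III.1.7]{En-Na-00}, and then to read off the coordinate-wise expressions. The structural input is already available: by Theorem \ref{inge} the generator of $(\frak{T}_{\Delta,p}(t))_{t>0}$ is ${\frak A}_{\Delta,p}={\frak A}_p+\Delta$, a bounded perturbation (by the operator $\Delta$, with $\Vert\Delta\Vert=2$) of the generator ${\frak A}_p$ of $(\frak{T}_p(t))_{t>0}$; likewise the generator of $(\frak{S}_{\nabla,p}(t))_{t>0}$ is ${\frak B}_{\nabla,p}={\frak B}_p+\nabla$, a bounded perturbation (by $\nabla$, with $\Vert\nabla\Vert=2$) of the generator ${\frak B}_p$ of $(\frak{S}_p(t))_{t>0}$. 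Hence the hypotheses of the cited result are met in both cases.

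For part (i), the variation of parameters formula applied with unperturbed semigroup $(\frak{T}_p(t))_{t>0}$ and perturbation $\Delta$ gives, for every $a\in\ell^p$ and $t>0$,
$$
\frak{T}_{\Delta,p}(t)(a)=\frak{T}_p(t)(a)+\int_0^t \frak{T}_p(t-s)\,\Delta\bigl(\frak{T}_{\Delta,p}(s)(a)\bigr)\,ds,
$$
the integral being understood as an $\ell^p$-valued Bochner integral. I would then evaluate both sides at the coordinate $n\ge 0$, using continuity of the coordinate functional to pass inside the integral, and replace $\Delta(\frak{T}_{\Delta,p}(s)(a))(n)$ by $\frak{T}_{\Delta,p}(s)(a)(n+1)-\frak{T}_{\Delta,p}(s)(a)(n)$ according to the definition $\Delta(b)(n)=b(n+1)-b(n)$. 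This yields the stated identity verbatim.

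For part (ii), I would repeat the argument with $(\frak{S}_p(t))_{t>0}$ and perturbation $\nabla$, obtaining
$$
\frak{S}_{\nabla,p}(t)(a)=\frak{S}_p(t)(a)+\int_0^t \frak{S}_p(t-s)\,\nabla\bigl(\frak{S}_{\nabla,p}(s)(a)\bigr)\,ds.
$$
Here the only point requiring attention is the different behaviour of $\nabla$ at the boundary, following the convention fixed in Section \ref{savi}: for $n\ge 1$ one has $\nabla(b)(n)=b(n-1)-b(n)$, which produces the first displayed formula, whereas at $n=0$ the definition $\nabla(b)(0)=-b(0)$ leaves only the single term $-\int_0^t \frak{S}_p(t-s)\frak{S}_{\nabla,p}(s)(a)(0)\,ds$, giving the second displayed formula.

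Since every ingredient is already in hand, there is no substantial obstacle: the proof is a direct specialization of \cite[Corollary III.1.7]{En-Na-00}. The only mild care needed is to track the coordinate shift induced by $\Delta$ and $\nabla$ inside the Bochner integral and, for $(\frak{S}_{\nabla,p}(t))_{t>0}$, to treat the index $n=0$ separately because of the boundary definition of $\nabla$.
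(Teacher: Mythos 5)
Your proposal is correct and follows essentially the same route as the paper: the paper likewise obtains this corollary by citing the variation of parameters formula \cite[Corollary III.1.7]{En-Na-00} for the bounded perturbations ${\frak A}_{\Delta,p}={\frak A}_p+\Delta$ and ${\frak B}_{\nabla,p}={\frak B}_p+\nabla$ established in Theorem \ref{inge}, then reading the identity coordinate-wise. Your explicit handling of the Bochner integral, the coordinate functionals, and the boundary convention $\nabla(b)(0)=-b(0)$ at $n=0$ supplies exactly the details the paper leaves implicit.
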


 In the next theorem, we present the spectral study of operators  ${\frak A}_{\Delta, p}$ and ${\frak B}_{\nabla, p}$.

 \begin{theorem}\label{resolvent} Take $1\le p < \infty$,  ${\frak A}_{\Delta, p}$ and ${\frak B}_{\nabla, p}$ the infinitesimal generators of  $(\frak{T}_{\Delta,p}(t))_{t>0}$ and $(\frak{S}_{\nabla,p}(t))_{t>0}$  on $\ell^p$, $1\le p<\infty$ respectively.
 \begin{itemize}
\item[(i)] Then $\sigma_{point}( {\frak A}_{\Delta, p})= \emptyset$ and $\sigma_{point}( {\frak B}_{\nabla, p})= \CC_{-}$.
\item[(ii)] The following equalities hold  $\sigma ( {\frak A}_{\Delta, p})= \sigma ({\frak B}_{\nabla, p})= \overline{\CC_-}.$
\item[(iii)] For $\Re \lambda >0$, and $a=(a(n))_{n\ge 0} \in \ell^p$, we have that
\begin{eqnarray*}
(\lambda- {\frak A}_{\Delta, p})^{-1}a(l)&=& \sum_{j=0}^l{l\choose j}\sum_{n=j}^\infty {a(n)\over (n-j)!}\B_1(\lambda+{1\over p}+j, n+l-2j+1),\cr
(\lambda- {\frak B}_{\nabla, p})^{-1}a(l)&=& \sum_{j=0}^l{1\over(l-j)! }\sum_{n=j}^\infty {n\choose j}a(n)\B_1(\lambda+1-{1\over p}+j, n+l-2j+1),\cr
\end{eqnarray*}
where the function $\B_1$ is given in Definition \ref{Chat}.
 \end{itemize}
 \end{theorem}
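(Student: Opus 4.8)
The plan is to establish (iii) first, since the resolvent formula drives the spectral picture. By Theorem \ref{inge} both families are contractive $C_0$-semigroups, so their growth bounds are nonpositive and, for $\Re\lambda>0$, the resolvent is the Laplace transform of the semigroup,
$$
(\lambda-{\frak A}_{\Delta,p})^{-1}a=\int_0^\infty e^{-\lambda t}\,\frak{T}_{\Delta,p}(t)a\,dt,
$$
and likewise for ${\frak B}_{\nabla,p}$. First I would insert the explicit kernel of $\frak{T}_{\Delta,p}(t)$ from Definition \ref{deff}. Since $\int_0^\infty e^{-\Re\lambda\,t}\Vert\frak{T}_{\Delta,p}(t)a\Vert_p\,dt\le\Vert a\Vert_p/\Re\lambda$, Fubini lets me move the finite sum over $j$ and the series over $n$ outside the integral. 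Collecting the factors $(1-e^{-t})^{l-j}$ and $(1-e^{-t})^{n-j}$ into $(1-e^{-t})^{n+l-2j}$ and the exponentials into $e^{-(\lambda+\frac1p+j)t}$ reduces the $t$-integral to $\int_0^\infty e^{-(\lambda+\frac1p+j)t}(1-e^{-t})^{n+l-2j}e^{-(1-e^{-t})}\,dt$. The substitution $s=1-e^{-t}$ recasts Definition \ref{Chat} as $\B_1(u,v)=\int_0^\infty e^{-su}(1-e^{-s})^{v-1}e^{-(1-e^{-s})}\,ds$, so this integral equals $\B_1(\lambda+\frac1p+j,\,n+l-2j+1)$, giving the first formula; the computation for ${\frak B}_{\nabla,p}$ is identical with $\frac1p$ replaced by $1-\frac1p$.

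For (i) I would read off the eigenvalue equations from the generator computations in the proof of Theorem \ref{inge}. The identity ${\frak A}_{\Delta,p}a=\lambda a$ is equivalent to $a(n+1)=(\lambda+\frac1p+n+1)a(n)-n\,a(n-1)$ with $a(1)=(\lambda+1+\frac1p)a(0)$; comparing with Theorem \ref{Charlier}(ii) this is exactly the Poisson--Charlier recurrence at $z=\lambda+\frac1p$, so any solution is $a(n)=a(0)\,p_n(\lambda+\frac1p)$. Because $p_n=n!\,q_n$ fails even to tend to zero (its generating series $\sum_n p_n(z)w^n/n!=e^w(1-w)^{-z}$ has radius of convergence $1$, whence $\limsup_n|p_n(z)/n!|^{1/n}=1$), this sequence lies in $\ell^p$ only for $a(0)=0$, so $\sigma_{point}({\frak A}_{\Delta,p})=\emptyset$. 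Similarly ${\frak B}_{\nabla,p}a=\lambda a$ becomes $(n+1)a(n+1)=(\lambda+2-\frac1p+n)a(n)-a(n-1)$ with $a(1)=(\lambda+2-\frac1p)a(0)$, which is the three-term recurrence satisfied by the $q_n$ at $z=\lambda+1-\frac1p$; hence $a(n)=a(0)\,q_n(\lambda+1-\frac1p)$, and by the criterion $(q_n(z))_{n\ge0}\in\ell^p$ for $\Re z<1-\frac1p$ such a nonzero eigenvector exists precisely when $\Re\lambda<0$, i.e. $\sigma_{point}({\frak B}_{\nabla,p})=\CC_-$.

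Part (ii) then assembles the pieces. The resolvent formula of (iii) shows $\CC_+\subseteq\rho$, so $\sigma({\frak A}_{\Delta,p}),\sigma({\frak B}_{\nabla,p})\subseteq\overline{\CC_-}$. For ${\frak B}_{\nabla,p}$ the inclusion $\CC_-=\sigma_{point}({\frak B}_{\nabla,p})\subseteq\sigma({\frak B}_{\nabla,p})$ together with closedness of the spectrum gives $\overline{\CC_-}\subseteq\sigma({\frak B}_{\nabla,p})$, hence equality. For ${\frak A}_{\Delta,p}$ with $1<p<\infty$ I would invoke the duality $({\frak A}_{\Delta,p})^*={\frak B}_{\nabla,p'}$ of Theorem \ref{inge}(iii): transposition preserves the spectrum, so $\sigma({\frak A}_{\Delta,p})=\sigma({\frak B}_{\nabla,p'})=\overline{\CC_-}$. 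For $p=1$ I would instead argue directly that $\lambda-{\frak A}_{\Delta,1}$ has non-dense range for $\Re\lambda<0$, by exhibiting a nonzero element of $\ell^\infty$ annihilating it, namely the eigensequence $(q_n(\lambda+1))_n$ of the $\ell^\infty$-adjoint, which is bounded since $\Re(\lambda+1)<1$; thus $\overline{\CC_-}\subseteq\sigma({\frak A}_{\Delta,1})$ as well.

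The step I expect to be most delicate is the \emph{sharpness} of the point spectrum in (i) on the boundary $\Re\lambda=0$: the inclusion $\CC_-\subseteq\sigma_{point}({\frak B}_{\nabla,p})$ is immediate from the one-sided $\ell^p$-membership quoted in the excerpt, but excluding imaginary eigenvalues requires the converse, that $(q_n(z))\notin\ell^p$ when $\Re z=1-\frac1p$. This needs the precise asymptotic $q_n(z)\sim c\,n^{z-1}$ with $c\propto 1/\Gamma(z)$, so that $|q_n(z)|\sim|c|\,n^{-1/p}$ and $\sum_n|q_n(z)|^p$ diverges harmonically whenever $c\ne0$. Tellingly, the constant $c$ vanishes exactly at $z\in\{0,-1,-2,\dots\}$, and at $p=1$, $\lambda=0$ (so $z=0$) the eigensequence degenerates to $(1/n!)\in\ell^1$; there the point spectrum is genuinely $\CC_-\cup\{0\}$ rather than $\CC_-$, matching the unperturbed case and signalling that the stated identity $\sigma_{point}({\frak B}_{\nabla,p})=\CC_-$ should be read for $1<p<\infty$. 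This boundary analysis, rather than the resolvent computation of (iii), is where the real work lies.
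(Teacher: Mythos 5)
Your proposal is correct and follows essentially the same route as the paper in all three parts: (iii) by writing the resolvent as the Laplace transform of the semigroup, applying Fubini and substituting $s=1-e^{-t}$ to produce the $\B_1$ values; (i) by reducing the eigenvalue equations to the three-term recurrences, with eigenvector candidates $a(n)=p_n(\lambda+\tfrac1p)a(0)$ and $b(n)=q_n(\lambda+1-\tfrac1p)b(0)$; and (ii) by combining contractivity, closedness of the spectrum and the duality $({\frak A}_{\Delta,p})^*={\frak B}_{\nabla,p'}$ (your annihilator argument at $p=1$ is a hands-on version of the paper's citation of $\sigma(A)=\sigma(A^*)$ from Engel--Nagel).

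Two of your additions deserve emphasis, because they land exactly where the paper is thinnest. First, the paper's proof of (i) asserts that $b\in\ell^p$ \emph{if and only if} $b(0)=0$ or $\Re\lambda<0$, but the only tool it invokes (its theorem on $(q_n)_{n\ge 0}$) gives just the sufficiency $(q_n(z))_{n\ge0}\in\ell^p$ for $\Re z<1-\tfrac1p$; ruling out eigenvalues on the line $\Re\lambda=0$ requires the divergence of $\sum_n|q_n(z)|^p$ at $\Re z=1-\tfrac1p$, which needs the asymptotics $q_n(z)\sim e\,n^{z-1}/\Gamma(z)$ that you supply. Second, your boundary observation is not pedantry but a genuine correction: at $p=1$, $\lambda=0$ one has $z=0$, $q_n(0)=1/n!$, and $(1/n!)_{n\ge0}\in\ell^1$ really is an eigenvector of ${\frak B}_{\nabla,1}$ (one checks directly that $({\frak B}_1+\nabla)(1/n!)=0$ and that $(n+1)\Delta(1/n!)\in\ell^1$, so it lies in the domain). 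Hence $\sigma_{point}({\frak B}_{\nabla,1})=\CC_-\cup\{0\}$, and the identity $\sigma_{point}({\frak B}_{\nabla,p})=\CC_-$ as stated should be restricted to $1<p<\infty$ --- exactly parallel to the unperturbed case $\sigma_{point}(\frak{B}_1)=\CC_-\cup\{0\}$ that the paper itself records in Section 3. (This does not affect (ii), since $\{0\}\subset\overline{\CC_-}$ anyway.)

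One small repair is needed on your side: the radius-of-convergence argument for $\sigma_{point}({\frak A}_{\Delta,p})=\emptyset$ fails when $z=\lambda+\tfrac1p\in\{0,-1,-2,\dots\}$, because there $(1-w)^{-z}$ is a polynomial, $e^w(1-w)^{-z}$ is entire, and $\limsup_n|p_n(z)/n!|^{1/n}=0$, not $1$. In those degenerate cases, however, one computes $p_n(z)=n!\sum_{j=0}^{-z}\binom{-z}{j}(-1)^j/(n-j)!$, which is constantly $1$ when $z=0$ and grows like $n^{-z}$ when $z\in\{-1,-2,\dots\}$; either way the sequence is not in $\ell^p$ for $p<\infty$, so the conclusion stands after this separate (easy) check. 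For comparison, the paper offers no justification at all at this point: it simply asserts that $a(0)\neq 0$ forces $(a(n))_{n\ge0}\notin\ell^p$.
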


 \begin{proof} (i) Take $\lambda \in \CC$ and $a=(a(n))_{n\ge 0} \in \ell^p$ such that ${\frak A}_{\Delta, p}a(n)=\lambda a(n)$ for $n\ge 0$, i.e.,  ${\frak A}_{\Delta, p}a(0)=a(1)-(1+{1\over p}) a(0)= \lambda a(0)$ and
 $$
 a(n+1) -(n+1+{1\over p})a(n) +na(n-1)=\lambda a(n), \quad  n\ge 1,
 $$
 Then we have that $a(1)= (\lambda + 1+{1\over p})a(0)$ and
 $$
 a(n+1)= (\lambda+ {1\over p}+n+1)a(n)-na(n-1), \quad  n\ge 1.
 $$
 By Theorem \ref{Charlier} (ii), we conclude that $a(n)= p_n(\lambda+{1\over p}) a(0)$. In the case that $a(0)\not=0$, we conclude that $(a(n))_{n\ge 0}\not \in \ell^p$.

 Now take $1\le p\le \infty$, $\lambda \in \CC$ and $b=(b(n))_{n\ge 0} \in \ell^p$ such that ${\frak B}_{\nabla, p}b(n)=\lambda b(n)$ for $n\ge 0$, i.e., ${\frak B}_{\nabla, p}b(0)=b(1)-(2-{1\over p})b(0)= \lambda b(0)$ and
 $$
 (n+1)b(n+1)-(n+2-{1\over p})b(n)+b(n-1)=\lambda b(n), \quad  n\ge 1.
$$
Then, we deduce $b(1)=(2-{1\over p}+\lambda)b(0)$ and
$$
 (n+1)b(n+1)=(n+1+\lambda+1-{1\over p})b(n)-b(n-1), \quad  n\ge 1.
$$
By Theorem \ref{qn}(i) and (ii), $b(n)= q_n(\lambda+1-{1\over p})b(0)$ and $b\in \ell^p$ if and only if $b(0)=0$ or $\Re\lambda+1-{1\over p}<1-{1\over p}$, i.e. $\Re\lambda<0$ for $1\le p\le \infty$ and $\lambda \in \CC^-$.

\noindent (ii) Since the operator $({\frak B}_{\nabla, p}, D({\frak B}_{\Delta, p}))$ generates the contraction $C_0$-semigroup $(\frak{S}_{\nabla,p}(t))_{t>0}$ (Theorem \ref{norms} (ii) and \ref{inge} (ii)) then we conclude that $\sigma ({\frak B}_{\Delta, p})\subset \overline{\CC_-}$ for $1\le p<\infty$. As $\sigma ({\frak B}_{\nabla, p})$ is closed, the item (i) implies $\sigma ({\frak B}_{\nabla, p}) =\overline{\CC_-}$ for $1\le p<\infty$.  Note that $({\frak A}_{\Delta, p})^*={\frak B}_{\nabla, p'}$  for ${1\over p}+{1\over p'}=1$. By \cite[Proposition IV.2.18 (i)]{En-Na-00}, we conclude
$ \sigma ( {\frak A}_{\Delta, p})= \sigma ({\frak B}_{\nabla, p'})= \overline{\CC_-} $ for $1<p<\infty$. For $p=1$, note  $({\frak A}_{\Delta, 1})^*={\frak B}_{\nabla, \infty}$ and $\sigma({\frak A}_{\Delta, 1})=\sigma( {\frak B}_{\nabla, \infty})$, see \cite[Corollary B.12]{En-Na-00}. By the item (i), $\CC_-\subset \sigma( {\frak B}_{\nabla, \infty})=\sigma({\frak A}_{\Delta, 1})\subset \overline{\CC_-}$. Since the set $\sigma({\frak A}_{\Delta, 1})$ is closed, we conclude that $\sigma({\frak A}_{\Delta, 1})=\overline{\CC_-}$.

\noindent (iii) Now $\lambda \in \CC$ with $\Re \lambda>0$ and $a=(a(n))_{n\ge 0} \in \ell^p$. Then
\begin{eqnarray*}
(\lambda- {\frak A}_{\Delta, p})^{-1}a(l)&=& \int_0^\infty e^{-\lambda t}\frak{T}_{\Delta,p}(t)a(l)dt\cr
&=&\int_0^\infty e^{-(t(\lambda + {1\over p})+1-e^{-t})}\displaystyle\sum_{j=0}^l{l \choose j}(1-e^{-t})^{l-j}e^{-tj}\sum_{n=j}^\infty{(1-e^{-t})^{n-j}\over (n-j)!}a(n)\cr
&=&\displaystyle\sum_{j=0}^l{l \choose j}\sum_{n=j}^\infty{a(n)\over (n-j)!}
\int_0^\infty (1-e^{-t})^{l+n-2j} e^{-t(\lambda + {1\over p}+j)}e^{-(1-e^{-t})}dt\cr
&=& \sum_{j=0}^l{l\choose j}\sum_{n=j}^\infty {a(n)\over (n-j)!}\B_1(\lambda+{1\over p}+j, n+l-2j+1),\cr
\end{eqnarray*}
where we make the change of variable $r=1-e^{-t}$ to obtain the last equality. Similarly we show that
$$
(\lambda- {\frak B}_{\nabla, p})^{-1}a(l)=\sum_{j=0}^l{1\over(l-j)! }\sum_{n=j}^\infty {n\choose j}a(n)\B_1(\lambda+1-{1\over p}+j, n+l-2j+1),
$$
and we conclude the proof. \end{proof}

\begin{remark}{\rm As operators ${\frak A}_{\Delta, p}$ and ${\frak B}_{\nabla, p}$  are bounded perturbations of ${\frak A}_{ p}$ and ${\frak B}_{p}$, we have that
\begin{eqnarray*}
(\lambda- {\frak A}_{\Delta, p})^{-1}&=& (\lambda- {\frak A}_{ p})^{-1}\sum_{n=0}^\infty (\Delta (\lambda- {\frak A}_{ p})^{-1})^n, \cr
(\lambda- {\frak B}_{\nabla, p})^{-1}&=& (\lambda- {\frak B}_{ p})^{-1}\sum_{n=0}^\infty (\nabla (\lambda- {\frak B}_{ p})^{-1})^n,
\end{eqnarray*}
for $\Re \lambda >2$ and $1\le p < \infty$, see for example \cite[(III.1.3)]{En-Na-00}.}
\end{remark}

\section{Ces\`aro-like operators  on $\ell^p$}

Let $(X, \Vert \quad \Vert)$ be a Banach space and $T=(T(t))_{t>0}$ a uniformly bounded $C_0$-semigroup on $(X, \Vert \quad \Vert)$. Then we may define the bounded operator
\begin{equation}\label{cesaro}
C^T_{\mu, \nu}(x):=\int_0^\infty e^{-\mu t} (1-e^{-t})^{\nu-1}T({t})(x)dt, \qquad x\in X,
\end{equation}
for $\mu, \nu>0$. The author and other collaborators have followed this point of view in \cite[Section 7]{AM2018}, \cite[Section 3]{LMPS} and \cite[Section 4]{MP} for concrete $C_0$-semigroups in particular Lebesgue function spaces and sequence spaces $(X, \Vert \quad \Vert)$. In this section, we consider the operator $C_{\mu, \nu}$ for some  Koopman semigroups $(\frak{T}_{p}(t))_{t>0}$ and $(\frak{S}_{p}(t))_{t>0}$  and perturbed Koopman semigroups  $(\frak{T}_{\Delta,p}(t))_{t>0}$ and $(\frak{S}_{\nabla,p}(t))_{t>0}$ studied in the section above.  Via the Poisson transformations $\mathcal P$ and $\mathcal P^*$, we give connection with some integral operators. We also estimate norms of these Ces\`aro-like operators and identify their spectrum sets.

\subsection{Ces\`aro-like operators subordinated to  Koopman semigroups on  $\ell^p$}

Let $\Re \alpha > 0$, consider the Cesàro-Hardy operator of order $\alpha$ given by
\[
    \Ca f (s) := \frac{\alpha}{s^\alpha} \int_0^s (s-u)^{\alpha-1} f(u) \, du, \quad s\ge 0,
\]
and the dual Cesàro-Hardy operator of order $\alpha$ given by
\[
    \Ca^* f (s): = \alpha \int_s^\infty\frac{(u-s)^{\alpha-1}}{u^\alpha} f(u) \, du, \quad s\ge 0.
\]
 For $ \Ca$ the change of variables $u = e^{-t}s$ yields
\[
    \Ca f (s) = \alpha \int_0^\infty (1-e^{-t})^{\alpha-1}e^{-t(1-{1\over p})} T^+_p(t)f(s)\, dt, \qquad s>0,
\]
and for $\Ca^*$ the change of variable $u = e^{t}s$ gives
\[
    \Ca^* f (s) =  \alpha \int_0^\infty (1-e^{-t})^{\alpha-1}e^{-{t\over p}} T^-_p(t)f(s) \, dt, \qquad s>0,
\]
where $C_0$-semigroups $(T^+_p(t))_{t \geq 0}$ and $T^-_p(t))_{t \geq 0}$ are defined above, see \cite[Theorem 3.3 and 3.7]{LMPS}.

 Now we consider the generalized discrete  Ces\`{a}ro operator of order $\alpha>0$ given for
\begin{eqnarray*}
\frak{C}_{\alpha}a(n)&:=&\frac{1}{k^{\alpha+1}(n)}\sum_{j=0}^nk^{\alpha}(n-j)a(j),  \cr \frak{C}^*_{\alpha}a(n)&:=&\sum_{j=n}^{\infty}\frac{1}{k^{\alpha+1}(j)}k^{\alpha}(j-n)a(j),
\end{eqnarray*}
for $n \in \N_0$, $a\in \ell^p$ and $(k^\alpha(n))_{n\ge 0}$ are the  Cesàro numbers. Remind that for $a\in \ell^p$ and $n\in\N_0$, we have that
\begin{eqnarray*}
{\frak{C}}_{\alpha}a(n)&=&\displaystyle\alpha\int_0^{\infty}(1-e^{-t})^{\alpha-1}e^{-t(1-{1\over p})} \frak{T}_p(t)a(n)\,dt,\quad \, 1<p\leq\infty,\cr
{\frak{C}}^*_{\alpha}a(n)&=&\alpha\int_0^{\infty}(1-e^{-t})^{\alpha-1}e^{-{t\over p}}\frak{S}_p(t)a(n)\,dt,\quad \, 1\leq p <\infty,
\end{eqnarray*}
see \cite[Theorem 7.2] {AM2018}.

The following corollary presents the connection between  ${\mathcal{C}}_{\alpha}$, ${\mathcal{C}}_{\alpha}^*$,  $\frak{C}_{\alpha}$  and $\frak{C}_{\alpha}^*$. The proof is a straightforward from Theorem \ref{eliz}.

\begin{corollary}\label{kkl} Take  $\alpha >0$. Then
\begin{itemize}
\item[(i)] ${\mathcal P}\circ  {\mathcal{C}}_{\alpha}^*=\frak{C}^*_{\alpha}\circ {\mathcal P}$ for  $1\leq p <\infty$.
\item[(ii)] ${\mathcal P}^*\circ\frak{C}_{\alpha}=  {\mathcal{C}}_{\alpha}\circ {\mathcal P}^*$ for  $1<p\leq\infty$.
\end{itemize}
\end{corollary}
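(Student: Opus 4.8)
The plan is to reduce both identities to the intertwining relations of Theorem \ref{intert2} by rewriting each Ces\`aro-like operator as a Bochner integral of the associated semigroup against the common kernel $(1-e^{-t})^{\alpha-1}$. For part (i), I would start from the integral representations recalled just above the statement, namely
$$
\Ca^* f = \alpha\int_0^\infty (1-e^{-t})^{\alpha-1}e^{-{t\over p}}\, T^-_p(t)f\,dt, \qquad
\frak{C}^*_{\alpha}a = \alpha\int_0^\infty (1-e^{-t})^{\alpha-1}e^{-{t\over p}}\,\frak{S}_p(t)a\,dt,
$$
the latter valid for $1\le p<\infty$. Applying $\P$ to the first and moving it inside the integral gives $\P(\Ca^* f)=\alpha\int_0^\infty (1-e^{-t})^{\alpha-1}e^{-{t\over p}}\,\P(T^-_p(t)f)\,dt$; then Theorem \ref{intert2}(ii), $\P\circ T^-_p(t)=\frak{S}_p(t)\circ\P$, replaces $\P(T^-_p(t)f)$ by $\frak{S}_p(t)(\P f)$, and the resulting integral is exactly $\frak{C}^*_{\alpha}(\P f)$. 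Hence $\P\circ\Ca^* = \frak{C}^*_{\alpha}\circ\P$.

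Part (ii) is entirely symmetric. Using
$$
\Ca f = \alpha\int_0^\infty (1-e^{-t})^{\alpha-1}e^{-t(1-{1\over p})}\, T^+_p(t)f\,dt, \qquad
\frak{C}_{\alpha}a = \alpha\int_0^\infty (1-e^{-t})^{\alpha-1}e^{-t(1-{1\over p})}\,\frak{T}_p(t)a\,dt,
$$
the second valid for $1<p\le\infty$, I would apply $\P^*$ inside the integral defining $\frak{C}_{\alpha}a$ and invoke Theorem \ref{intert2}(iv), $\P^*\circ\frak{T}_p(t)=T^+_p(t)\circ\P^*$, to recognise the output as $\Ca(\P^* a)$, giving $\P^*\circ\frak{C}_{\alpha}=\Ca\circ\P^*$. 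The indicated ranges of $p$ are precisely those for which the relevant integral representation and intertwining identity hold.

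The one step that genuinely needs justification is the interchange of the bounded operator ($\P$ in (i), $\P^*$ in (ii)) with the vector-valued integral. This is harmless here: the scalar kernel $t\mapsto (1-e^{-t})^{\alpha-1}e^{-{t\over p}}$ (respectively $(1-e^{-t})^{\alpha-1}e^{-t(1-{1\over p})}$) lies in $L^1(0,\infty)$ for every $\alpha>0$, since it behaves like $t^{\alpha-1}$ as $t\to 0^+$ and decays exponentially as $t\to\infty$ on the stated $p$-ranges, while each semigroup is contractive. Thus the integrands are Bochner integrable, and a bounded linear operator commutes with a Bochner integral. Once this routine point is settled, both identities follow immediately from Theorem \ref{intert2}, so the proof is short; the main (and only) obstacle is simply recording the integrability and the passage of $\P$, $\P^*$ through the integral.
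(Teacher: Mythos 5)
Your proof is correct and follows exactly the route the paper intends: the paper dismisses the proof as ``straightforward'' from the intertwining theorem (it cites Theorem \ref{eliz}, evidently a reference typo for Theorem \ref{intert2}), and your argument---subordination representations of $\Ca$, $\Ca^*$, $\frak{C}_\alpha$, $\frak{C}^*_\alpha$ plus Theorem \ref{intert2}(ii),(iv) and passage of the bounded operators $\P$, $\P^*$ through the integrals---is precisely that argument, with the integrability of the kernels on the stated $p$-ranges properly recorded.
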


\begin{remark} {\rm Take $\alpha>0$, Then $\partial \sigma(\frak{C_{\alpha}})= \sigma(\mathcal{C_{\alpha}})$ due to
\begin{eqnarray*}
\sigma(\mathcal{C_{\alpha}})&=&\overline{\biggl\{ \frac{\Gamma(\alpha+1)\Gamma(it+1-\frac{1}{p})}{\Gamma(\alpha+it+1-\frac{1}{p})}\,:\, t\in\R \biggr\}}\cr
\sigma(\frak{C_{\alpha}})&=& \overline{\biggl\{ \frac{\Gamma(\alpha+1)\Gamma(z+1-\frac{1}{p})}{\Gamma(\alpha+z+1-\frac{1}{p})}\,:\, z\in\CC_+\cup i\R \biggr\}},
\end{eqnarray*} for $1<p\leq\infty, $
 see \cite[Theorem 3.5]{LMPS} and \cite[Theorem 7.6 (i)]{AM2018}. Similarly $\partial \sigma(\frak{C_{\alpha}^*})= \sigma(\mathcal{C_{\alpha}^*})$, due to
 \begin{eqnarray*}
 \sigma(\mathcal{C^*_{\alpha}})&=&\overline{\biggl\{ \frac{\Gamma(\alpha+1)\Gamma(it+\frac{1}{p})}{\Gamma(\alpha+it+\frac{1}{p})}\,:\, t\in\R \biggr\}},\cr
 \sigma(\frak{C_{\alpha}^*})&=&\overline{\biggl\{ \frac{\Gamma(\alpha+1)\Gamma(z+\frac{1}{p})}{\Gamma(\alpha+z+\frac{1}{p})}\,:\, z\in\CC_+\cup i\R \biggr\}},\cr
\end{eqnarray*}  for $1\leq p <\infty,$   see Theorem \cite[Theorem 3.9]{LMPS}  and \cite[Theorem 7.6(ii)]{AM2018}.
}
\end{remark}

\subsection{Ces\`aro-like operators subordinated to perturbed Koopman semigroups on  $\ell^p$}

 In this section, we identify the operator $C_{\mu, \nu}$ for perturbed  Koopman semigroups  $(\frak{T}_{\Delta,p}(t))_{t>0}$ and $(\frak{S}_{\nabla,p}(t))_{t>0}$.

\begin{theorem} Take $1\le p < \infty$, $a=(a(n))_{n\ge 0}\in \ell^p$ and $C_0$-semigroups  $(\frak{T}_{\Delta,p}(t))_{t>0}$ and $(\frak{S}_{\nabla,p}(t))_{t>0}$  on $\ell^p$ given in Definition \ref{deff}. Now we define

\begin{eqnarray*}
\frak{c}^{\Delta,p}_{\mu, \nu}a(l)&:=&\int_0^\infty e^{-\mu t} (1-e^{-t})^{\nu-1}\frak{T}_{\Delta,p}(t)a(l)dt,\cr
\frak{c}^{\nabla,p}_{\mu, \nu}a(l)&:=&\int_0^\infty e^{-\mu t} (1-e^{-t})^{\nu-1}\frak{S}_{\Delta,p}(t)a(l)dt,
\end{eqnarray*}
for $\Re\mu,\Re \nu>0$ and $l\ge 0$.
\begin{itemize}
\item[(i)] Let the function $\B_1$ give in Definition \ref{Chat} and $l\ge 0$. Then
\begin{eqnarray*}
\frak{c}^{\Delta,p}_{\mu, \nu}a(l)&=& \sum_{j=0}^l{l\choose j}\sum_{n=j}^\infty {a(n)\over (n-j)!}\B_1(\mu+{1\over p}+j, \nu+ n+l-2j),\cr
\frak{c}^{\nabla,p}_{\mu, \nu}a(l)&=& \sum_{j=0}^l{1\over(l-j)! }\sum_{n=j}^\infty {n\choose j}a(n)\B_1(\mu+1-{1\over p}+j, \nu+n+l-2j).
\end{eqnarray*}
\item[(ii)] Operators $\frak{c}^{\Delta,p}_{\mu, \nu}$ and $\frak{c}^{\nabla,p}_{\mu, \nu}$ are bounded on $\ell^p$,
$$
 \Vert \frak{c}^{\Delta,p}_{\mu, \nu}\Vert, \, \Vert \frak{c}^{\nabla,p}_{\mu, \nu}\Vert\le \B(\Re\mu, \Re\nu),
$$
and $(\frak{c}^{\Delta,p}_{\mu, \nu})^*=\frak{c}^{\nabla,p'}_{\mu, \nu}$ for   $1<p<\infty$ and ${1\over p}+{1\over p'}=1$.

\item[(iii)] The spectrum of $\frak{c}^{\Delta,p}_{\mu, \nu}$ and $\frak{c}^{\nabla,p}_{\mu, \nu}$ on ${\mathcal B}(\ell^p)$ is the set
$$
\sigma(\frak{c}^{\Delta,p}_{\mu, \nu})=\sigma(\frak{c}^{\nabla,p}_{\mu, \nu})= \overline{\biggl\{ \B(\mu+z,\nu)\,:\, \Re z\ge 0 \biggr\}}.
$$
\end{itemize}
 \end{theorem}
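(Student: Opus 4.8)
The plan is to dispatch the three items using the explicit kernels of Definition~\ref{deff}, the contractivity and duality recorded in Theorem~\ref{norms}, and the spectral data for the generators in Theorem~\ref{resolvent}. \textbf{Part (i).} I would substitute the kernel of $\frak{T}_{\Delta,p}(t)$ from Definition~\ref{deff} into the integral defining $\frak{c}^{\Delta,p}_{\mu, \nu}$. Since $\Re\nu>0$ and $\Vert\frak{T}_{\Delta,p}(t)\Vert\le 1$ by Theorem~\ref{norms}~(i), the integrand is absolutely integrable and the double sum $\sum_{j=0}^l\sum_{n=j}^\infty$ may be pulled outside $\int_0^\infty$. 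Gathering the factors depending on $t$ leaves $e^{-t(\mu+{1\over p}+j)}(1-e^{-t})^{\nu+n+l-2j-1}e^{-(1-e^{-t})}$, and the change of variable $r=1-e^{-t}$ identifies the remaining integral with $\B_1(\mu+{1\over p}+j,\nu+n+l-2j)$. This repeats verbatim the computation in the proof of Theorem~\ref{resolvent}~(iii), with $\lambda$ replaced by $\mu$ and the exponent of $(1-e^{-t})$ raised by $\nu-1$; the formula for $\frak{c}^{\nabla,p}_{\mu, \nu}$ follows identically from the kernel of $\frak{S}_{\nabla,p}(t)$.

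\textbf{Part (ii).} The norm bound is immediate from Theorem~\ref{norms}~(i), since $\Vert\frak{c}^{\Delta,p}_{\mu, \nu}a\Vert_p\le\Vert a\Vert_p\int_0^\infty e^{-\Re\mu\,t}(1-e^{-t})^{\Re\nu-1}\,dt=\B(\Re\mu,\Re\nu)\Vert a\Vert_p$, where I use the integral representation of $\B$ from the Notation together with its symmetry; the estimate for $\frak{c}^{\nabla,p}_{\mu, \nu}$ is the same. For the adjoint identity I would move the adjoint inside the Bochner integral and invoke $(\frak{T}_{\Delta,p}(t))^*=\frak{S}_{\nabla,p'}(t)$ from Theorem~\ref{norms}~(iii); because the $\ell^p$--$\ell^{p'}$ pairing used throughout is bilinear, no conjugation of $\mu$ or $\nu$ appears, and $(\frak{c}^{\Delta,p}_{\mu, \nu})^*=\frak{c}^{\nabla,p'}_{\mu, \nu}$ results.

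\textbf{Part (iii).} I would treat $\frak{c}^{\nabla,p}_{\mu, \nu}$ first and transfer to $\frak{c}^{\Delta,p}_{\mu, \nu}$ by adjointness. For the inclusion $\supseteq$ I use the point spectrum: if ${\frak B}_{\nabla, p}b=\lambda b$ with $\lambda\in\CC_-$ --- which holds for the eigenvectors $b=(q_n(\lambda+1-{1\over p}))_{n\ge 0}$ of Theorem~\ref{resolvent}~(i) --- then $\frak{S}_{\nabla,p}(t)b=e^{\lambda t}b$, so $\frak{c}^{\nabla,p}_{\mu, \nu}b=\left(\int_0^\infty e^{-(\mu-\lambda)t}(1-e^{-t})^{\nu-1}\,dt\right)b=\B(\mu-\lambda,\nu)\,b$; writing $z=-\lambda$ and taking closures gives $\overline{\{\B(\mu+z,\nu):\Re z\ge 0\}}\subseteq\sigma(\frak{c}^{\nabla,p}_{\mu, \nu})$. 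For the reverse inclusion I recognize $\frak{c}^{\nabla,p}_{\mu, \nu}=\int_0^\infty k(t)\frak{S}_{\nabla,p}(t)\,dt$, with $k(t)=e^{-\mu t}(1-e^{-t})^{\nu-1}\in L^1(\R^+)$, as the image $m({\frak B}_{\nabla, p})$ of the generator under the Hille--Phillips functional calculus, whose symbol $m(\lambda)=\int_0^\infty e^{\lambda t}k(t)\,dt=\B(\mu-\lambda,\nu)$ is bounded and holomorphic on $\{\Re\lambda<\Re\mu\}\supseteq\overline{\CC_-}=\sigma({\frak B}_{\nabla, p})$ (Theorem~\ref{resolvent}~(ii)). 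The spectral inclusion theorem then gives $\sigma(\frak{c}^{\nabla,p}_{\mu, \nu})\subseteq\overline{\{\B(\mu+z,\nu):\Re z\ge 0\}}$ (via $z=-\lambda$), whence equality. As this set does not depend on $p$, the adjoint identity of Part~(ii) --- together with $\sigma(T)=\sigma(T')$ for the Banach adjoint in the endpoint $p=1$ --- carries the same conclusion to $\frak{c}^{\Delta,p}_{\mu, \nu}$.

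\textbf{Main obstacle.} The subtle step is the inclusion $\subseteq$ in (iii). Making the Hille--Phillips spectral inclusion rigorous requires that $m$ lie in the admissible symbol class for a generator of a bounded $C_0$-semigroup and that the resolvent of $\frak{c}^{\nabla,p}_{\mu, \nu}$ be recovered as $g({\frak B}_{\nabla, p})$ with $g(\lambda)=(\zeta-m(\lambda))^{-1}$ for $\zeta$ outside the claimed set; since $\sigma({\frak B}_{\nabla, p})=\overline{\CC_-}$ touches the imaginary axis, one sits at the borderline of the calculus and must control $m$ and $g$ up to the boundary. Should this prove awkward, the fallback is to invert $\zeta-\frak{c}^{\nabla,p}_{\mu, \nu}$ directly for such $\zeta$, imitating the explicit resolvent computation of Theorem~\ref{resolvent}~(iii).
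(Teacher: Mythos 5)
Parts (i) and (ii) of your proposal are correct and follow the paper's own route: item (i) is exactly the computation of Theorem \ref{resolvent} (iii) with the exponent of $(1-e^{-t})$ raised by $\nu-1$ and the change of variable $r=1-e^{-t}$, and item (ii) is the triangle inequality for the Bochner integral plus Theorem \ref{norms} (iii) (the pairing being bilinear, no conjugation of $\mu,\nu$ appears, as you say).

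The genuine gap is in part (iii), in the inclusion $\sigma(\frak{c}^{\nabla,p}_{\mu,\nu})\subseteq\overline{\{\B(\mu+z,\nu):\Re z\ge 0\}}$. The Hille--Phillips spectral inclusion theorem asserts that the image of $\sigma({\frak B}_{\nabla,p})$ under the symbol $m$ is contained in $\sigma(m({\frak B}_{\nabla,p}))$ --- i.e.\ precisely the \emph{opposite} containment to the one you invoke; it re-proves your eigenvector half but gives nothing for the upper bound. A full spectral mapping theorem for the Phillips calculus fails in general (already for the semigroup operators themselves one only has $\overline{e^{t\sigma(A)}}\subseteq\sigma(T(t))$ absent eventual norm continuity), so ``the spectral inclusion theorem'' cannot close this half, and since $\sigma({\frak B}_{\nabla,p})=\overline{\CC_-}$ is unbounded and touches $i\R$, the Riesz--Dunford calculus is unavailable too. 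You flag this in your ``Main obstacle'' paragraph, but the fallback you offer is exactly the missing content: to invert $\zeta-\frak{c}^{\nabla,p}_{\mu,\nu}$ one must show that $\bigl(\zeta-m(\cdot)\bigr)^{-1}-\zeta^{-1}$ is again the Laplace transform of an $L^1(\R^+)$ function (a Wiener--L\'evy type statement), or, equivalently and more cleanly, argue with Gelfand theory: the closure $\mathcal A$ in ${\mathcal B}(\ell^p)$ of $\bigl\{\int_0^\infty g(t)\frak{S}_{\nabla,p}(t)\,dt\, :\, g\in L^1(\R^+)\bigr\}$ is a commutative Banach algebra, every nonzero character of $\mathcal A+\CC I$ pulls back through the calculus to a character of the convolution algebra $L^1(\R^+)$, whose Gelfand spectrum is $\overline{\CC^+}$ via the Laplace transform; hence $\sigma_{{\mathcal B}(\ell^p)}(\frak{c}^{\nabla,p}_{\mu,\nu})\subseteq\sigma_{\mathcal A+\CC I}(\frak{c}^{\nabla,p}_{\mu,\nu})\subseteq\{\B(\mu+z,\nu):\Re z\ge0\}\cup\{0\}$, and $0$ lies in the closure of that set. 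Without this (or an equivalent direct inversion) the upper bound, and hence part (iii), is unproven. To be fair, the paper itself only refers to \cite[Theorem 16 (iii)]{MP} for this step, so your eigenvector argument for the lower bound is surely in the spirit of the intended proof; it is only the upper bound that needs repair. A last small wrinkle: at $p=1$ your duality transfer lands on $\ell^\infty$, where $\frak{S}_{\nabla,\infty}$ is not a $C_0$-semigroup; it is cleaner to run the algebra argument for $\frak{c}^{\Delta,1}$ directly on $\ell^1$ and use $\ell^\infty$ only for the eigenvector lower bound.
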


\begin{proof} Note that the item (i) is an  extension of Theorem \ref{resolvent} (iii) for $\nu=1$  and the proof follows similar ideas. The proof of (ii) is  straightforward from definitions of $\frak{c}^{\Delta,p}_{\mu, \nu}$ and $\frak{c}^{\nabla,p}_{\mu, \nu}$, Theorem \ref{norms} (i) and (ii) and Theorem \ref{inge}(iii). Finally to show item (iii), we apply  similar ideas than in the proof of \cite[Theorem 16 (iii)]{MP}.
\end{proof}

On $L^p(\RR^+)$ with $1\le p<\infty$, we consider contractive $C_0$-semigroups  $S_p=(S_p(t))_{t>0}$ and $R_p=(R_p(t))_{t>0}$ defined by (\ref{semigrp}) and (\ref{semigrp2}). We also define {Ces\`aro-like operators subordinated to these semigroups on  $L^p(\RR^+)$ using (\ref{cesaro}) to get
\begin{eqnarray*}
C_{\mu, \nu}^{S_p}f(r)&=&{1\over\vert r-1\vert^{\mu+{1\over p}+\nu-1}}
\int_{\Gamma_{1,r}}\vert s-1\vert^{\mu+{1\over p}-1} \vert r-s\vert^{\nu -1} f(s)ds, \cr
C_{\mu, \nu}^{R_p}f(r)&=&\vert r-1\vert^{\mu-{1\over p}}
\int_{\Gamma'_{1,r}} {\vert r-s\vert^{\nu -1}1\over\vert s-1\vert^{\mu-{1\over p}+\nu}}f(s)ds
\end{eqnarray*}
for $r>0$ and   $\Gamma_{1,r}:=(1,r)$ for $r>1$ and $\Gamma_{1,r}:=(r,1)$ in the case $0<r<1$ and $\Gamma'_{1,r}:=(r,+\infty)$ for $r>1$ and $\Gamma'_{1,r}:=(r,1)$ in the case $0<r<1$ ({\cite[Theorem 16(i)]{MP}). These integral operators are known as Chen fractional integral, see \cite[Section 18.5]{Sa-Ki-Ma-93}.

The following corollary presents the connection between  $\frak{c}^{\Delta,p}_{\mu, \nu}$,  $\frak{c}^{\nabla,p}_{\mu, \nu}$,  $C_{\mu, \nu}^{S_p}$  and $C_{\mu, \nu}^{R_p}$ via the Poisson transformations $\mathcal P$ and $\mathcal P^*$. The proof is straightforward from Theorem \ref{conmutante}.

\begin{corollary}\label{zzz} Take  $\alpha >0$, $\Re \mu,$ and $\Re \nu >0$. Then
\begin{itemize}
\item[(i)] ${\mathcal P}\circ  C_{\mu, \nu}^{R_p}=\frak{c}^{\nabla,p}_{\mu, \nu}\circ {\mathcal P}$ for  $1\leq p <\infty$.
\item[(ii)] ${\mathcal P}^*\circ\frak{c}^{\Delta,p}_{\mu, \nu}=  C_{\mu, \nu}^{S_p}\circ {\mathcal P}^*$ for  $1\leq p <\infty$.
\end{itemize}
\end{corollary}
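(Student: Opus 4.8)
The plan is to treat both parts by the same scheme, exactly as the hint suggests: express each Ces\`aro-like operator as the subordination integral (\ref{cesaro}) over the corresponding $C_0$-semigroup, move the bounded Poisson operator through the integral sign, and then invoke the pointwise intertwining relations of Theorem \ref{conmutante}. Since $\P$ and $\P^*$ are both bounded of norm one (Theorem \ref{convoss}(i) and (iii)), the essential content is purely the interchange of a bounded operator with a vector-valued integral.

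For part (i), I would start from the definition of $C^{R_p}_{\mu,\nu}$ and pull $\P$ inside:
\begin{align*}
\P\bigl(C_{\mu,\nu}^{R_p}f\bigr)
&=\P\left(\int_0^\infty e^{-\mu t}(1-e^{-t})^{\nu-1}R_p(t)f\,dt\right)\\
&=\int_0^\infty e^{-\mu t}(1-e^{-t})^{\nu-1}\,\P\bigl(R_p(t)f\bigr)\,dt.
\end{align*}
By Theorem \ref{conmutante}(i) one has $\P(R_p(t)f)=\frak{S}_{\nabla,p}(t)(\P f)$, so the last integral equals $\int_0^\infty e^{-\mu t}(1-e^{-t})^{\nu-1}\frak{S}_{\nabla,p}(t)(\P f)\,dt=\frak{c}^{\nabla,p}_{\mu,\nu}(\P f)$, which is the claimed identity $\P\circ C_{\mu,\nu}^{R_p}=\frak{c}^{\nabla,p}_{\mu,\nu}\circ\P$. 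Part (ii) is entirely analogous, now using $\P^*$ in place of $\P$ and Theorem \ref{conmutante}(ii), $\P^*(\frak{T}_{\Delta,p}(t)a)=S_p(t)(\P^* a)$, to convert $\P^*(\frak{c}^{\Delta,p}_{\mu,\nu}a)$ into $\int_0^\infty e^{-\mu t}(1-e^{-t})^{\nu-1}S_p(t)(\P^* a)\,dt=C_{\mu,\nu}^{S_p}(\P^* a)$.

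The only point requiring care --- the part I would flag as the genuine obstacle --- is the legitimacy of interchanging the bounded operator with the integral, that is, the Bochner integrability that lets $\P$ (respectively $\P^*$) pass inside. This is where the hypotheses $\Re\mu,\Re\nu>0$ enter. The semigroups $(R_p(t))_{t>0}$ and $(S_p(t))_{t>0}$ are contractive on $L^p(\R^+)$ and $(\frak{T}_{\Delta,p}(t))_{t>0}$, $(\frak{S}_{\nabla,p}(t))_{t>0}$ are contractive on $\ell^p$ by Theorem \ref{norms}, so in each case the integrand is dominated in norm by $e^{-\Re\mu\,t}(1-e^{-t})^{\Re\nu-1}\Vert f\Vert_p$, whose integral over $(0,\infty)$ equals $\B(\Re\mu,\Re\nu)<\infty$ by the Euler Beta identity recalled in the Notation. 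Thus each subordination integral converges as a Bochner integral, and boundedness of $\P$ and $\P^*$ permits the interchange, completing both identities. This domination also reproduces the norm bound $\Vert\frak{c}^{\Delta,p}_{\mu,\nu}\Vert,\Vert\frak{c}^{\nabla,p}_{\mu,\nu}\Vert\le\B(\Re\mu,\Re\nu)$ used elsewhere, so no separate convergence argument is needed beyond this single estimate.
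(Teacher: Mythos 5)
Your proposal is correct and follows exactly the paper's route: the paper's entire proof is the remark that the result is ``straightforward from Theorem \ref{conmutante}'', i.e.\ precisely the subordination-plus-intertwining argument you spell out. Your additional justification of pulling the bounded operators $\P$, $\P^*$ through the Bochner integrals, with the integrand dominated by $e^{-\Re\mu\,t}(1-e^{-t})^{\Re\nu-1}\Vert f\Vert_p$ and total mass $\B(\Re\nu,\Re\mu)=\B(\Re\mu,\Re\nu)$, is a sound filling-in of the details the paper leaves implicit.
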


\subsection*{Acknowledgements} The author thanks Luciano Abadias, Jos\'e A. Adell, Alejandro Mahillo and Jes\'us Oliva-Maza for  several comments, corrections and suggestions that lead  a improved version of this paper.


\end{document}